\theoremstyle{plain}
\newtheorem{theorem}                {Theorem}      [section]
\newtheorem*{theorem*}                {Theorem \ref{thm:appl}}
\newtheorem{proposition}  [theorem]  {Proposition}
\newtheorem{lemma}        [theorem]  {Lemma}
\theoremstyle{definition}
\newtheorem{remark}       [theorem]  {Remark}
\DeclareMathOperator{\trace}{trace} 
\DeclareMathOperator{\Div}{div} 
\DeclareMathOperator{\ricci}{Ricci}
\DeclareMathOperator{\Span}{span}
\DeclareMathOperator{\grad}{grad}
\numberwithin{equation}{section}
\begin{document}

\title[Complete biconservative surfaces in $\mathbb{R}^3$ and $\mathbb{S}^3$]
{Complete biconservative surfaces in $\mathbb{R}^3$ and $\mathbb{S}^3$}

\author{Simona~Nistor}

\address{Faculty of Mathematics\\ Al. I. Cuza University of Iasi\\
Bd. Carol I, 11 \\ 700506 Iasi, Romania} \email{nistor.simona@ymail.com}

\thanks{The work was supported by a grant of the Romanian National Authority for Scientific Research and Innovation, CNCS - UEFISCDI, project number PN-II-RU-TE-2014-4-0004.}

\subjclass[2010]{Primary 53A10; Secondary 53C40, 53C42}

\keywords{Biconservative surfaces, complete surfaces, mean curvature function, real space forms}

\begin{abstract}
In this paper we consider the complete biconservative surfaces in Euclidean space $\mathbb{R}^3$ and in the unit Euclidean sphere $\mathbb{S}^3$. Biconservative surfaces in 3-dimensional space forms are characterized by the fact that the gradient of their mean curvature function is an eigenvector of the shape operator, and we are interested in studying local and global properties of such surfaces with non-constant mean curvature function. We determine the simply connected, complete Riemannian surfaces that admit biconservative immersions in $\mathbb{R}^3$ and $\mathbb{S}^3$. Moreover, such immersions are explicitly described.
\end{abstract}

\maketitle
\section{Introduction}

The study of \textit{biconservative submanifolds} is derived from the theory of \textit{biharmonic submanifolds} which has been of large interest in the last decade (see, for example \cite{BMO12,BMO10a,BMO10,BMO08,CI91,O10,OW11}).

Let $\left(M^m,g\right)$ and $\left(N^n,h\right)$ be two Riemannian manifolds. A critical point of the \textit{bienergy functional}
$$
E_2:C^{\infty}(M,N)\rightarrow\mathbb{R},\quad E_{2}(\phi)=\frac{1}{2}\int_{M}|\tau(\phi)|^{2}\ v_g,
$$
where $\tau(\phi)$ is the tension field of a smooth map $\phi:M\to N$, is called a \textit{biharmonic map}, and it is characterized by the vanishing of the \textit{bitension field} $\tau_2(\phi)$ (see\cite{GYJ}).

A Riemannian immersion $\phi:M^m\to \left(N^n,h\right)$ or, simply, a submanifold $M$ of $N$, is called \textit{biharmonic} if $\phi$ is a biharmonic map.

In 1924, D. Hilbert called the \textit{stress-energy tensor} associated to a functional $E$, a symmetric 2-covariant tensor $S$ which is conservative, i.e., $\Div S=0$, at the critical points of $E$. In the case of the bienergy functional $E_2$, G. Y. Jiang defined in 1987 the stress-bienergy tensor $S_2$ and proved that it satisfies
$$
\Div S_2=-\langle\tau_2(\phi),d\phi\rangle.
$$
Thus, if $\phi$ is biharmonic, then $\Div S_2=0$ (see \cite{J}).

For biharmonic submanifolds, from the above relation, we see that $\Div S_2=0$ if and only if the tangent part of the bitension field vanishes. A submanifold $M$ is called \textit{biconservative} if $\Div S_2=0$.

The biconservative submanifolds were studied for the first time in 1995 by Th. Hasanis and Th. Vlachos (see \cite{HV95}). In that paper the biconservative hypersurfaces in the Euclidean space $\mathbb{R}^n$ were called \textit{H-hypersurfaces}, and they were fully classified in $\mathbb{R}^3$ and $\mathbb{R}^4$.

Recent results in the field of biconservative submanifolds were obtained, for example, in \cite{FOP,Fu,FT,MOR,S,UT}.

When the ambient space is a 3-dimensional space form $N^3(c)$, it is easy to see that the surfaces with constant mean curvature (\textit{CMC surfaces}) are biconservative. Therefore, we are interested in biconservative surfaces which \textit{are not CMC}, i.e., $\grad f\neq0$, where $f$ is the mean curvature function.

The explicit local parametric equations of biconservative surfaces in $\mathbb{R}^3$, $\mathbb{S}^3$, and $\mathbb{H}^3$ were determined in \cite{CMOP} and \cite{Fu}. When the ambient space is $\mathbb{R}^3$ the result in \cite{HV95} was also reobtained in \cite{CMOP}.

Our paper is organized as follows. In Section $\ref{preliminaries}$ we recall the results concerning the local classification of biconservative surfaces of non-constant mean curvature function in $\mathbb{R}^3$ and $\mathbb{S}^3$, as they are presented in ~\cite{CMOP}. Then, we recall a result about the intrinsic characterisation of biconservative surfaces in 3-dimensional space form $N^3(c)$ (see ~\cite{FNO}). More precisely, this result provides the necessary and sufficient conditions for an abstract Riemannian surface $\left(M^2,g\right)$ to admit, locally, a biconservative embedding with $|\grad f|> 0$ in $N^3(c)$. It is also recalled that, if a simply connected Riemannian surface $\left(M^2,g\right)$ admits a biconservative immersion with $|\grad f|> 0$ in $N^3(c)$, then it is unique.

In the second part of the paper, we take the next step and, writing the metric $g$ in isothermal coordinates as $g=e^{2\varphi}\left(du^2+dv^2\right)$, we determine the equation which must be satisfied by $\varphi$ such that $\left(M^2,g\right)$ can be locally embedded in $N^3(c)$ as a non CMC biconservative surface. This equation is then solved for $c=0$ and $c=1$ (Proposition $\ref{prop4.3}$ and Proposition $\ref{prop4.5}$).

Our main goal is to extend the \textit{local} classification results for biconservative surfaces in $N^3(c)$, with $c=0$ and $c=1$, to \textit{global} results, i.e., we ask that biconservative surfaces to be \textit{complete} and with $|\grad f|>0$ on an open dense subset.

Our first main result is Theorem $\ref{main_th1}$ where we determine the simply connected complete Riemannian surfaces $\left(\mathbb{R}^2,g_C\right)$ which admit a biconservative immersion in $\mathbb{R}^3$. Moreover, these immersions are explicitly given and they have $|\grad f|> 0$ on an open dense subset of $\mathbb{R}^2$.

Next, we obtain a similar result when $c=1$. In Theorem $\ref{main_th2}$ we determine the simply connected complete Riemannian surfaces  $\left(\mathbb{R}^2,g_{C,C^\ast}\right)$ which admit a biconservative immersion in $\mathbb{S}^3$. We show that, up to isometries, there exists only a one-parameter family of such Riemannian surfaces indexed by $C$. In order to prove Theorem $\ref{main_th2}$, the key ingredient is that a biconservative surface in $\mathbb{S}^3$ is locally isometric to a surface of revolution in $\mathbb{R}^3$ (Theorem $\ref{theorem3.18}$) and then, by a gluing process, we extend this surface of revolution, which is not complete, to a complete one (Theorem $\ref{main_th3}$). The new surface admits a biconservative immersion in $\mathbb{S}^3$, with $|\grad f|> 0$ on an open dense subset. Finally, we prove the uniqueness of the complete biconservative surfaces in $\mathbb{S}^3$.

\textbf{Conventions.} We denote an abstract Riemannian surface, or an abstract Riemannian manifold by $\left(M,g\right)$. To avoid any confusions, in the case of surfaces, we denote $S^2$ the image in the ambient space of an abstract Riemannian surface $\left(M^2,g\right)$ through the immersion $\phi$.

\textbf{Acknowledgments.} The author would like to thank C.~Oniciuc, D.~Fetcu and S.~Montaldo for many useful comments and suggestions.

\section{Preliminaries}\label{preliminaries}

We first recall two known results concerning the completeness of a Riemannian manifold (see \cite{DC,G}).

\begin{proposition}[\cite{G}]
\label{prop1}
Let $g$ and $\tilde{g}$ be two Riemannian metrics on a manifold $M$. If $(M,g)$ is complete and $\tilde{g}-g$ is non-negative definite at any point of $M$, then $(M,\tilde{g})$ is also complete.
\end{proposition}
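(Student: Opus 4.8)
The plan is to compare arc-lengths with respect to the two metrics and then to invoke the characterisation of completeness in terms of divergent curves. First I would unwind the hypothesis: saying that $\tilde{g}-g$ is non-negative definite at every point of $M$ means precisely that $(\tilde{g}-g)(X,X)\ge 0$, i.e. $\tilde{g}(X,X)\ge g(X,X)\ge 0$, for every tangent vector $X$. Taking square roots (both quantities being non-negative) gives $|X|_{\tilde{g}}\ge |X|_{g}$ pointwise. Integrating this inequality along an arbitrary piecewise smooth curve $\gamma$ yields
$$
L_{\tilde{g}}(\gamma)=\int |\gamma'|_{\tilde{g}}\,dt\ \ge\ \int |\gamma'|_{g}\,dt=L_{g}(\gamma),
$$
and consequently $d_{\tilde{g}}(p,q)\ge d_{g}(p,q)$ for all $p,q\in M$.

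Next I would recall the standard fact, a consequence of the Hopf--Rinow theorem, that a Riemannian manifold is complete if and only if every divergent curve has infinite length, a curve being called \emph{divergent} if it eventually leaves every compact subset of $M$. The key remark is that divergence is a purely topological notion, and both $g$ and $\tilde{g}$ induce the standard manifold topology on $M$; hence the same curves are divergent for the two metrics, and the same subsets are compact. Since $(M,g)$ is complete, every divergent curve has infinite $g$-length, and by the length comparison above every such curve then has infinite $\tilde{g}$-length as well. By the same characterisation applied to $\tilde{g}$, the manifold $(M,\tilde{g})$ is complete.

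The argument is essentially elementary, and I expect the only point requiring care to be the passage through the correct equivalent formulation of completeness together with the observation that the two metrics are topologically equivalent. One could equally well argue via Cauchy sequences: the inequality $d_{g}\le d_{\tilde{g}}$ shows that every $\tilde{g}$-Cauchy sequence is $g$-Cauchy, hence $g$-convergent by completeness of $(M,g)$; since the two metrics induce the same topology, this limit is also a $\tilde{g}$-limit, so $(M,\tilde{g})$ is metrically, and therefore geodesically, complete. In either formulation the single (minor) obstacle is to justify interchanging the metric and topological notions of convergence, which is immediate once one notes that $d_{g}$ and $d_{\tilde{g}}$ are topologically equivalent.
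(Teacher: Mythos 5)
Your argument is correct. Note, however, that the paper itself offers no proof of this proposition: it is quoted as a known result from Gordon's paper \cite{G}, so there is nothing internal to compare against. What you give is precisely the standard argument one would expect behind the citation: from $\tilde{g}(X,X)\ge g(X,X)$ you get $L_{\tilde{g}}(\gamma)\ge L_{g}(\gamma)$ and hence $d_{\tilde{g}}\ge d_{g}$, and then either equivalent formulation of completeness finishes the job. Both of your routes are sound, and you correctly identify the one point needing care --- that $d_{g}$ and $d_{\tilde{g}}$ both induce the manifold topology, so ``divergent curve,'' ``compact,'' and ``convergent sequence'' mean the same thing for the two metrics; in the Cauchy-sequence version this is exactly what upgrades the $d_{g}$-limit to a $d_{\tilde{g}}$-limit, after which Hopf--Rinow converts metric completeness of $(M,d_{\tilde{g}})$ into geodesic completeness of $(M,\tilde{g})$. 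The divergent-curve criterion you invoke (completeness is equivalent to every curve that eventually leaves each compact set having infinite length) is likewise standard. No gaps.
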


\begin{proposition}[\cite{DC}]
Let $S^2$ be a regular surface in $\mathbb{R}^3$. If $S^2$ is a closed subset of $\mathbb{R}^3$, then $S^2$ is complete.
\end{proposition}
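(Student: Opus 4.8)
The plan is to establish completeness through the metric-space formulation of the Hopf--Rinow theorem: it suffices to show that $(S^2, d)$ is a complete metric space, where $d$ denotes the intrinsic Riemannian distance induced by the first fundamental form, since this is equivalent to geodesic completeness. The entire argument rests on comparing the intrinsic distance $d$ with the restriction to $S^2$ of the ambient Euclidean distance $d_{\mathbb{R}^3}$.

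First I would record the elementary global inequality
$$
d_{\mathbb{R}^3}(p,q) \le d(p,q), \qquad p,q \in S^2,
$$
which holds because any piecewise smooth curve in $S^2$ joining $p$ to $q$ has Euclidean length at least $|p-q|$, while the intrinsic length of such a curve coincides with its Euclidean length (the induced metric is the restriction of the ambient one). Now let $(p_n)$ be a Cauchy sequence in $(S^2, d)$. By the inequality it is also Cauchy in $(\mathbb{R}^3, d_{\mathbb{R}^3})$, hence converges to some $p \in \mathbb{R}^3$; since $S^2$ is a closed subset of $\mathbb{R}^3$, the limit $p$ belongs to $S^2$.

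It remains to upgrade the Euclidean convergence $p_n \to p$ to convergence in the intrinsic metric, and this is the main obstacle. For this I would exploit the regularity of $S^2$ near $p$: choosing a local parametrization $\mathbf{x}\colon U \to S^2$ with $\mathbf{x}(q_0) = p$, the coefficients of the first fundamental form are continuous, hence bounded on a compact neighborhood of $q_0$. A short computation then yields a constant $C > 0$ and a Euclidean ball $B$ around $p$ such that every point $x \in S^2 \cap B$ can be joined to $p$ by a curve lying in $\mathbf{x}(U)$ whose intrinsic length is at most $C\,|x - p|$, that is,
$$
d(x, p) \le C\, d_{\mathbb{R}^3}(x, p) \qquad \text{for } x \in S^2 \cap B.
$$
Since $p_n \to p$ in $\mathbb{R}^3$, for large $n$ the points $p_n$ lie in $B$, and the local upper bound forces $d(p_n, p) \to 0$. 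Thus every Cauchy sequence converges in $(S^2, d)$, which is therefore complete, and by Hopf--Rinow $S^2$ is a complete Riemannian surface.

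The delicate point is precisely the local comparison $d \le C\, d_{\mathbb{R}^3}$ near $p$, the reverse of the trivial global inequality. It is exactly here that the regularity of the surface is used, guaranteeing a parametrization with bounded first fundamental form, while the compactness of a small closed neighborhood supplies the uniform constant $C$. I note that the infimum definition of $d$ works in our favor: a single locally constructed curve already furnishes the required upper bound, so one need not worry that competing paths might wander out of the neighborhood.
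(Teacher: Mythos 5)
Your argument is correct, but note that the paper offers no proof of this proposition at all: it is quoted from do Carmo \cite{DC}, and the proof there is a direct geodesic-extension argument rather than your metric-completeness one. In do Carmo's route, one takes a unit-speed geodesic $\gamma$ on a maximal interval $[0,s_0)$ with $s_0<\infty$, observes $|\gamma(s)-\gamma(t)|\le |s-t|$ so that $\gamma(s)$ converges in $\mathbb{R}^3$ as $s\nearrow s_0$, uses closedness to place the limit $q$ on $S^2$, and then invokes the uniform existence time for geodesics starting near $q$ (smooth dependence on initial conditions for the geodesic ODE) to extend $\gamma$ past $s_0$, a contradiction; Hopf--Rinow is not needed. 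Your route instead proves Cauchy completeness of $(S^2,d)$ and then applies Hopf--Rinow; it buys a statement that generalizes verbatim to closed submanifolds of $\mathbb{R}^n$, at the price of the local two-sided comparison between $d$ and $d_{\mathbb{R}^3}$, which is genuinely the crux and which you correctly identify. Two points should be made explicit to close it fully. First, you need $S^2\cap B\subset \mathbf{x}(U)$, i.e., that no other ``sheet'' of the surface enters the ball $B$: this is exactly what the definition of a regular surface supplies (the image of a parametrization is $S^2\cap V$ for an ambient open set $V$; shrink $B$ inside $V$), and it deserves mention since it is the only place where regularity interacts with your step 3. Second, boundedness of the first fundamental form bounds intrinsic length by parameter length, but it does not by itself dominate the parameter distance by $|x-p|$; you also need a positive lower bound on $d\mathbf{x}$, most cleanly obtained by writing $S^2$ near $p$ as a graph over its tangent plane and joining $x$ to $p$ by the curve over the straight segment, which yields $C=\sqrt{1+M^2}$ with $M$ a bound on the gradient. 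Finally, $d$ is defined per connected component; since each component of a closed regular surface is itself closed in $\mathbb{R}^3$, the argument applies componentwise, and the proof is complete.
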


Concerning biharmonic maps, as we have already seen, the Euler-Lagrange equation for bienergy functional is given by $\tau_2(\phi)=0$, where
$$
\tau_{2}(\phi)=-\Delta\tau(\phi)-\trace R^N(d\phi,\tau(\phi))d\phi
$$
is the \textit{bitension field} of $\phi$, $\Delta=-\trace(\nabla^{\phi})^2 =-\trace(\nabla^{\phi}\nabla^{\phi}-\nabla^{\phi}_{\nabla})$ is the rough Laplacian defined on
sections of $\phi^{-1}(TN)$ and $R^N$ is the curvature tensor of $N$ given by $R^N(X,Y)Z=[\overline\nabla_X,\overline\nabla_Y]Z-\overline\nabla_{[X,Y]}Z$.

Now we consider the stress-energy tensor $S_2$ associated to the bienergy. This tensor, that was studied for the first time in \cite{J} and then in papers like \cite{CMOP,Fu,LMO,MOR,MOR2}, is given by
\begin{align*}
S_2(X,Y)=&\frac{1}{2}|\tau(\phi)|^2\langle X,Y\rangle+\langle d\phi,\nabla\tau(\phi)\rangle\langle X,Y\rangle\\&-\langle d\phi(X),\nabla_Y\tau(\phi)\rangle-\langle d\phi(Y),\nabla_X\tau(\phi)\rangle
\end{align*}
and it satisfies
$$
\Div S_2=-\langle\tau_2(\phi),d\phi\rangle.
$$
We can see that in the case when $\phi$ is a submersion, $\Div S_2$ vanishes if and only if $\phi$ is biharmonic. When $\phi:M\to N$ is an Riemannian immersion, then $(\Div S_2)^{\sharp}=-\tau_2(\phi)^{\top}$, where $\sharp$ denotes the musical isomorphism sharp. Therefore, in general, for a Riemannian immersion, $\Div S_2$ does not automatically vanish.

The biharmonic equation $\tau_2(\phi)=0$ of a submanifold $\phi:M\rightarrow N$ can be decomposed in its normal and tangent part, and in the particular case of hypersurfaces $M$ in $N$, one obtains the following theorem.

\begin{theorem}[\cite{BMO12,O10}]
If $M^m$ is a hypersurface in a Riemannian manifold $N^{m+1}$, then $M$ is biharmonic if and only if the tangent and normal components of $\tau_2(\phi)$ vanish, i.e., respectively
$$
{2A(\grad f)+f\grad f-2f(\ricci^N(\eta))^{\top}=0}
$$
and
$$
\Delta f+f|A|^2-f\ricci^N(\eta,\eta)=0,
$$
where $\eta$ is a unit normal vector field of $M$ in $N$ and $f=\trace A$ is the mean curvature function.
\end{theorem}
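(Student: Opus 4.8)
The plan is to substitute the explicit form of the tension field of a hypersurface immersion into the bitension field, and then separate the resulting expression into its components tangent and normal to $M$. Since $\phi$ is isometric, the tension field is the trace of the second fundamental form, so $\tau(\phi)=(\trace A)\eta=f\eta$, where $A$ is the shape operator associated to the unit normal $\eta$. Everything therefore reduces to computing $\Delta(f\eta)$ and $\trace R^N(d\phi,f\eta)d\phi$ and reading off the two components. Throughout I would fix a local orthonormal frame $\{e_i\}$ on $M$, geodesic at the point under consideration, and use the Gauss formula $\overline{\nabla}_X Y=\nabla_X Y+\langle AX,Y\rangle\eta$ together with the Weingarten formula $\overline{\nabla}_X\eta=-AX$.

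First I would compute the rough Laplacian of $f\eta$. Differentiating once gives $\nabla^\phi_{e_i}(f\eta)=(e_if)\eta-fAe_i$, and differentiating a second time, while decomposing $\overline{\nabla}_{e_i}(Ae_i)$ through Gauss, produces both a normal and a tangent contribution. Summing over $i$ and using the sign convention $\Delta=-\trace(\nabla^\phi)^2$, I expect the normal part of $\Delta(f\eta)$ to be $(\Delta f+f|A|^2)\eta$, where $|A|^2=\trace A^2=\sum_i|Ae_i|^2$, and the tangent part to be $2A(\grad f)+f\sum_i(\nabla_{e_i}A)e_i$.

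The next step is to rewrite the divergence term $\sum_i(\nabla_{e_i}A)e_i$. Here I would invoke the contracted Codazzi equation: starting from $(\nabla_X A)Y-(\nabla_Y A)X=-R^N(X,Y)\eta$, which itself follows by expanding $R^N(X,Y)\eta$ with Gauss and Weingarten, and then tracing, one obtains $\sum_i(\nabla_{e_i}A)e_i=\grad(\trace A)-(\ricci^N(\eta))^\top=\grad f-(\ricci^N(\eta))^\top$, where $(\ricci^N(\eta))^\top$ denotes the tangential part of the Ricci operator of $N$ applied to $\eta$. I would then treat the curvature term $\trace R^N(d\phi,f\eta)d\phi=f\sum_i R^N(e_i,\eta)e_i$ using only the algebraic symmetries of $R^N$: the pairing and antisymmetry identities give $\sum_i\langle R^N(e_i,\eta)e_i,W\rangle=-\ricci^N(\eta,W)$ for every $W$, so the normal component of $\sum_i R^N(e_i,\eta)e_i$ is $-\ricci^N(\eta,\eta)\eta$ and its tangent component is $-(\ricci^N(\eta))^\top$.

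Finally I would assemble $\tau_2(\phi)=-\Delta(f\eta)-f\sum_i R^N(e_i,\eta)e_i$. The normal component becomes $-(\Delta f+f|A|^2-f\ricci^N(\eta,\eta))\eta$, and the tangent component becomes $-(2A(\grad f)+f\grad f-2f(\ricci^N(\eta))^\top)$; requiring each to vanish yields the two stated equations, the equivalence with $\tau_2(\phi)=0$ being immediate since a vector vanishes if and only if its tangent and normal parts do. I expect the main obstacle to be purely bookkeeping: getting every sign right and, in particular, seeing how the factor $2$ in the tangent equation arises from two independent sources — one copy of $f(\ricci^N(\eta))^\top$ from the Codazzi term sitting inside $\Delta(f\eta)$ and a second copy from the curvature term — while keeping the tangential Ricci operator $(\ricci^N(\eta))^\top$ carefully distinct from the scalar normal Ricci curvature $\ricci^N(\eta,\eta)$ that appears in the normal equation.
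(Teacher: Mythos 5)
Your proposal is correct: with the paper's conventions ($\Delta=-\trace(\nabla^\phi)^2$, $R^N(X,Y)Z=[\overline\nabla_X,\overline\nabla_Y]Z-\overline\nabla_{[X,Y]}Z$) the expansion of $\Delta(f\eta)$ via Gauss--Weingarten, the traced Codazzi identity $\sum_i(\nabla_{e_i}A)e_i=\grad f-(\ricci^N(\eta))^{\top}$, and the symmetry computation giving $\sum_i R^N(e_i,\eta)e_i=-(\ricci^N(\eta))^{\top}-\ricci^N(\eta,\eta)\eta$ all check out, including the two sources of the factor $2$. The paper itself offers no proof — the theorem is quoted from \cite{BMO12,O10} — and your derivation is exactly the standard argument given in those references, so there is nothing to compare beyond noting full agreement.
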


From this decomposition, it follows that a surface $\phi:M^2\to N^3(c)$ in a space form $N^3(c)$ is biconservative if and only if
\begin{equation*}
A(\grad f)=-\frac{f}{2}\grad f.
\end{equation*}

\subsection{Biconservative surfaces in $\mathbb{R}^3$}

$\newline$
In the following, we will present some results concerning biconservative surfaces with $|\grad f|>0$ in the 3-dimensional Euclidean space.

\begin{theorem}[\cite{CMOP}]
\label{th3.3}
Let $S^2$ be a biconservative surface in $\mathbb{R}^3$ with $f(p)>0$ and $(\grad f)(p)\neq 0$, at any $p\in M$. Then, locally, $S^2$ is a surface of revolution given by
$$
X_{C_1}(\rho,v)=\left(\rho \cos v,\rho \sin v, t_{C_1}(\rho)\right),
$$
where
$$
t_{C_1}(\rho)=\frac{3}{2C_1}\left(\rho^{1/3}\sqrt{C_1\rho^{2/3}-1}+\frac{1}{\sqrt{C_1}}\log \left(\sqrt{C_1}\rho^{1/3}+\sqrt{C_1\rho^{2/3}-1}\right)\right),
$$
$\rho>C_{1}^{-3/2}$, with $C_1$ a positive constant.
\end{theorem}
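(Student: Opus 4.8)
The plan is to use the biconservative equation $A(\grad f)=-\tfrac{f}{2}\grad f$ directly: it says that $\grad f$ is a principal direction of the shape operator $A$. Since $A$ is self-adjoint and $(\grad f)(p)\neq 0$ everywhere, I would fix a local positively oriented orthonormal frame $\{e_1,e_2\}$ of principal directions with $e_1=\grad f/|\grad f|$, so that $Ae_1=\lambda_1 e_1$, $Ae_2=\lambda_2 e_2$. The equation forces $\lambda_1=-f/2$, and since $f=\trace A=\lambda_1+\lambda_2$ we obtain $\lambda_2=3f/2$; in particular both principal curvatures are controlled by $f$ alone. As $\grad f$ is parallel to $e_1$, we also have $e_2(f)=0$, so $f$ varies only along $e_1$.

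The second step is the Codazzi equation, which for a surface in the flat space $\mathbb{R}^3$ and the frame above splits, writing $\nabla_X e_1=\theta(X)e_2$, into $\theta(e_1)(\lambda_1-\lambda_2)=e_2(\lambda_1)$ and $e_1(\lambda_2)=\theta(e_2)(\lambda_1-\lambda_2)$. Since $\lambda_1-\lambda_2=-2f\neq 0$, $e_2(\lambda_1)=-\tfrac12 e_2(f)=0$ and $e_1(\lambda_2)=\tfrac32 e_1(f)$, these reduce to $\theta(e_1)=0$ and $\theta(e_2)=-\tfrac34\,e_1(f)/f$. The relation $\theta(e_1)=0$ means $\nabla_{e_1}e_1=0$, i.e. the integral curves of $e_1$ are geodesics, while the level curves of $f$ are their orthogonal trajectories.

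Next I would pass to Fermi coordinates $(s,v)$ adapted to these two orthogonal foliations, in which $g=ds^2+G^2\,dv^2$ with $e_1=\partial_s$ and $e_2=G^{-1}\partial_v$. Computing $\langle\nabla_{e_2}e_1,e_2\rangle=G_s/G$ and comparing with $\theta(e_2)=-\tfrac34 e_1(f)/f$ gives $\partial_s\log G=-\tfrac34\partial_s\log f$; since $e_2(f)=0$ makes $f=f(s)$, after a rescaling of $v$ we may take $G=f^{-3/4}$, depending on $s$ only. Hence $\partial_v$ is a Killing field for $g$, and as $\lambda_1,\lambda_2$ depend on $s$ alone both fundamental forms are $\partial_v$-invariant, so by the fundamental theorem of surface theory the flow of $\partial_v$ is induced by a one-parameter group of isometries of $\mathbb{R}^3$ and $S^2$ is, locally, a surface of revolution. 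Feeding $G=f^{-3/4}$ into the Gauss equation $K=\det A=\lambda_1\lambda_2=-\tfrac34 f^2$ together with the intrinsic formula $K=-G_{ss}/G$ yields the ODE $ff''=\tfrac74 (f')^2-f^4$, which I integrate once to $(f')^2=Cf^{7/2}-4f^4$ for a positive constant $C$.

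Finally I would realize the immersion as $X(\rho,v)=(\rho\cos v,\rho\sin v,t(\rho))$, with $\rho$ the distance to the rotation axis. Matching the rotational metric, $g_{vv}=\rho^2$, against $\theta(e_2)=\rho_s/\rho=-\tfrac34(\log f)'$ forces $f=c\,\rho^{-4/3}$; substituting this and $ds=\sqrt{1+(t')^2}\,d\rho$ into $(f')^2=Cf^{7/2}-4f^4$ turns the problem into a first-order ODE for the slope $t'(\rho)$. Integrating $t'$ and then $t$ produces the stated closed form $t_{C_1}(\rho)$, the logarithmic term arising from the second quadrature and the range $\rho>C_1^{-3/2}$ coming from the positivity of the radicand $C_1\rho^{2/3}-1$ (equivalently from $(f')^2>0$). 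I expect the genuine difficulty to lie in these last steps: upgrading the warped-product metric to honest rotational symmetry of the immersed surface, and matching the intrinsic solution $f(s)$ to the extrinsic parameter $\rho$ so that the two quadratures reproduce exactly the closed form with its constant $C_1$.
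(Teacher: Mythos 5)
Your outline is essentially the route of \cite{CMOP} (the paper itself only quotes this theorem, so that classification proof is the benchmark), and the computations you give all check out: $\lambda_1=-f/2$, $\lambda_2=3f/2$, the Codazzi relations $\theta(e_1)=0$ and $\theta(e_2)=-\tfrac34\,e_1(f)/f$, the normalization $G=f^{-3/4}$, the Gauss-equation ODE $ff''=\tfrac74(f')^2-f^4$ with first integral $(f')^2=Cf^{7/2}-4f^4$ (this captures all solutions: setting $u=(f')^2$ as a function of $f$ gives a first-order linear equation), and the extrinsic matching $f=c\,\rho^{-4/3}$, consistent with the expression $f_{C_1}=\frac{2}{3\sqrt{C_1}\rho^{4/3}}$ appearing in the paper's proof of Proposition \ref{prop2.6}. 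The final quadrature is routine: the profile ODE $3\rho t''+t'\left(1+(t')^2\right)=0$ gives $t'(\rho)=\left(C_1\rho^{2/3}-1\right)^{-1/2}$, which integrates exactly to the stated $t_{C_1}$, with $\rho>C_1^{-3/2}$ from positivity of the radicand.

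The one genuine gap is the sentence ``by the fundamental theorem of surface theory the flow of $\partial_v$ is induced by a one-parameter group of isometries of $\mathbb{R}^3$ and $S^2$ is, locally, a surface of revolution.'' The fundamental theorem (plus a uniqueness argument to get the group property) only yields a one-parameter subgroup of the isometry group of $\mathbb{R}^3$, and such a subgroup is in general a group of \emph{screw motions}, with pure rotations and pure translations as special cases; rotational symmetry does not follow automatically. You must exclude the other two cases. Translations are ruled out because the orbits of $\partial_v$ would be straight lines, whereas their normal curvature is $\langle Ae_2,e_2\rangle=\lambda_2=\tfrac32 f>0$. Proper screw motions are ruled out because the orbits are lines of curvature ($e_2$ is principal), so their geodesic torsion vanishes; on the other hand, equivariance under the group makes the angle between the surface normal and the principal normal of an orbit constant along that orbit, so the geodesic torsion equals the torsion of the orbit, which for a helix of nonzero pitch and positive radius is nonzero --- a contradiction (and a zero-radius orbit is again a straight line). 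With this supplement your argument closes; note that \cite{CMOP} avoids the issue altogether by showing directly that the level curves of $f$ are planar circles centered on a fixed axis, which is the main structural difference between your proposal and the cited proof.
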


Obviously, $\lim_{\rho\searrow C_1^{-3/2}}{t_{C_1}(\rho)}=0$. As $t'_{C_1}(\rho)>0$ for any $\rho\in\left(C_{1}^{-3/2},\infty\right)$, we can think $\rho$ as a function of $t$ and
$$
X_{C_1}(t,v)=\left(\rho_{C_1}(t) \cos v, \rho_{C_1} (t)\sin v, t\right), \qquad t\in (0,\infty).
$$

\begin{proposition}[\cite{MOR3}]
\label{prop3.4}
If we consider the symmetry of the graph of $t_{C_1}$, when $\rho\in \left(C_{1}^{-3/2},\infty\right)$ with respect to the $O\rho=Ox$ axis, we get a smooth complete biconservative surface $\tilde{S}^2_{C_1}$ in $\mathbb{R}^3$, given by
$$
X_{C_1}(t,v)=\left(x_{C_1}(t)\cos v,x_{C_1}(t)\sin v,t\right), \qquad (t,v)\in \mathbb{R},
$$
where
\begin{equation*}
x_{C_1}(t)=\left\{
\begin{array}{ccc}
\rho_{C_1}(t)&,& t>0\\
C_1^{-3/2}&,& t=0\\
\rho_{C_1}(-t)&,& t<0
\end{array}
\right.
\end{equation*}
is a smooth function. Moreover, the curvature function $f$ is positive and $\grad f$ is different from zero at any point of an open dense subset of $\tilde{S}^2_{C_1}$.
\end{proposition}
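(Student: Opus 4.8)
The plan is to verify three things about the reflected surface $\tilde{S}^2_{C_1}$: that the profile function $x_{C_1}$ is smooth (in particular across $t=0$), that the resulting surface is complete, and that it is biconservative with the stated properties of $f$. I would begin with the smoothness, which is the crux of the matter. On $(0,\infty)$ we have $x_{C_1}(t)=\rho_{C_1}(t)$ and on $(-\infty,0)$ we have $x_{C_1}(t)=\rho_{C_1}(-t)$, so $x_{C_1}$ is automatically smooth away from $t=0$ and is even by construction; thus all its odd-order derivatives at $t=0$ vanish provided the one-sided limits exist and agree. The real work is to show the even derivatives match up and that $x_{C_1}$ is genuinely $C^\infty$ at the origin.

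To handle the behaviour at $t=0$, I would exploit the inverse relationship between $\rho$ and $t$. From Theorem \ref{th3.3} we have an explicit $t_{C_1}(\rho)$ with $t'_{C_1}(\rho)>0$ on $(C_1^{-3/2},\infty)$ and $t_{C_1}(\rho)\to 0$ as $\rho\searrow C_1^{-3/2}$. I would compute the expansion of $t_{C_1}(\rho)$ near the left endpoint $\rho=C_1^{-3/2}$; since the integrand defining $t_{C_1}$ behaves like $\sqrt{C_1\rho^{2/3}-1}$, one expects $t_{C_1}(\rho)\sim c\,(\rho-C_1^{-3/2})^{1/2}$ to leading order, so that $t^2$ is a smooth function of $\rho$ with nonzero derivative at the endpoint. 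Inverting, $\rho_{C_1}(t)-C_1^{-3/2}$ should be a smooth function of $t^2$, which is exactly the condition guaranteeing that the even extension $x_{C_1}$ is smooth across $t=0$ with $x_{C_1}(0)=C_1^{-3/2}$ and $x_{C_1}'(0)=0$. This analyticity-of-$t^2$-in-$\rho$ step is where I expect the main technical obstacle to lie, since it requires a careful Taylor expansion of the logarithmic and square-root terms rather than a one-line estimate.

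Once smoothness of $x_{C_1}$ is established, completeness follows readily. The surface is the image of the map $(t,v)\mapsto(x_{C_1}(t)\cos v, x_{C_1}(t)\sin v, t)$ with $t$ ranging over all of $\mathbb{R}$; since $x_{C_1}(t)\ge C_1^{-3/2}>0$ everywhere, the map is a regular immersion (indeed an embedding onto its image) whose induced metric is $\big(1+x_{C_1}'(t)^2\big)\,dt^2+x_{C_1}(t)^2\,dv^2$, and along any curve escaping to infinity the arclength diverges because $t$ itself is unbounded. Alternatively, and more cleanly, I would argue that $\tilde{S}^2_{C_1}$ is a closed subset of $\mathbb{R}^3$ and invoke the second Proposition in the Preliminaries to conclude completeness.

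Finally, the biconservative property and the assertion about $f$ transfer from Theorem \ref{th3.3}. On the open set $t\neq0$ the surface coincides locally with the surface of revolution of Theorem \ref{th3.3} (for $t<0$ after the reflection $t\mapsto -t$, which is an isometry of $\mathbb{R}^3$ preserving the biconservative condition), so $f>0$ and $\grad f\neq0$ there; the set $\{t\neq0\}$ is open and dense in $\tilde{S}^2_{C_1}$, giving the stated conclusion. Since biconservativity is expressed by the pointwise equation $A(\grad f)=-\tfrac{f}{2}\grad f$, which holds on the dense open set and involves only continuous tensorial quantities, it extends by continuity to all of $\tilde{S}^2_{C_1}$, so the whole reflected surface is biconservative.
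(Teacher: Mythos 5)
The paper itself offers no proof of Proposition \ref{prop3.4}: it is quoted from \cite{MOR3} (listed as unpublished results), so there is nothing internal to compare your argument against; the closest the paper comes is the remark in the proof of Proposition \ref{prop2.6} that the gluing along the boundary circle must be checked to be smooth --- exactly the point you correctly identify as the crux. Judged on its own, your plan is sound and complete in outline, and every step can be executed. For the key smoothness step, note that the leading-order asymptotic $t_{C_1}(\rho)\sim c\,(\rho-C_1^{-3/2})^{1/2}$ alone would not suffice (you flag this yourself); the clean way to carry out your ``$\rho$ is a smooth function of $t^2$'' idea is the substitution $s=\sqrt{C_1\rho^{2/3}-1}$, which turns the explicit formula of Theorem \ref{th3.3} into
\begin{equation*}
t_{C_1}=\frac{3}{2C_1^{3/2}}\left(s\sqrt{1+s^2}+\log\left(s+\sqrt{1+s^2}\right)\right),
\qquad \frac{dt_{C_1}}{ds}=\frac{3}{C_1^{3/2}}\sqrt{1+s^2},
\end{equation*}
an \emph{odd} real-analytic function of $s$ with nonvanishing derivative at $s=0$. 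Its inverse is therefore odd and analytic, so $s^2=C_1\rho^{2/3}-1$, and hence $\rho=C_1^{-3/2}\left(1+s^2\right)^{3/2}$, is an analytic function of $t^2$; by the standard even-extension criterion (a smooth function of $t^2$ is smooth and even) your $x_{C_1}$ is smooth --- indeed real-analytic --- across $t=0$, with $x_{C_1}'(0)=0$.

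The remaining steps are correct as written. Completeness follows either from your closed-subset observation (the image is the zero set of $(y_1,y_2,y_3)\mapsto y_1^2+y_2^2-x_{C_1}(y_3)^2$, so the do Carmo proposition of the Preliminaries applies) or, even more directly, from Proposition \ref{prop1}, since $x_{C_1}\geq C_1^{-3/2}$ gives $g\geq dt^2+C_1^{-3}dv^2$, a complete flat metric. Your continuity argument for biconservativity is also valid: $A$, $f$ and $\grad f$ are continuous on the whole surface, the equation $A(\grad f)=-\frac{f}{2}\grad f$ holds on the dense open set $\{t\neq 0\}$, and at $t=0$ both sides vanish because $\grad f=0$ there (consistent with the paper's observation, in the proof of Proposition \ref{prop2.6}, that $f$ is constant and $\grad f_{C_1}=0$ along the boundary circle), while $f=\frac{2}{3\sqrt{C_1}}\,x_{C_1}^{-4/3}>0$ everywhere. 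So your proposal is a correct and essentially self-contained substitute for the missing proof.
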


Moreover the above construction of complete biconservative surfaces with $\grad f$ different from zero on an open dense subset is unique.

\begin{proposition}
\label{prop2.6}
The complete biconservative surfaces $\tilde{S}_{C_1}$ are unique (up to reparameterization).
\end{proposition}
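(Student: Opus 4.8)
The plan is to show that any connected complete biconservative surface $S^2$ in $\mathbb{R}^3$ with $f>0$ and $\grad f\neq 0$ on an open dense subset $W$ must coincide, up to reparameterization, with one of the surfaces $\tilde S^2_{C_1}$ of Proposition~\ref{prop3.4}. First I would work on $W$: since $\grad f\neq 0$ there, Theorem~\ref{th3.3} realizes a neighbourhood of each point of $W$ as a piece of the surface of revolution $X_{C_1}$, parametrized by $\rho\in(C_1^{-3/2},\infty)$ (equivalently $t\in(0,\infty)$) for some positive constant $C_1$. Thus $f$ is constant along the parallels and strictly monotone along the meridians, its level sets foliate $W$ into these parallels, and the whole local geometry on $W$ is encoded by the single profile function $t_{C_1}$.

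The key point is a distance estimate near the limiting circle. From the explicit formula for $t_{C_1}$, as $\rho\searrow C_1^{-3/2}$ one checks that $t_{C_1}(\rho)$ is of order $(\rho-C_1^{-3/2})^{1/2}$, so along a meridian $ds=\sqrt{1+(t'_{C_1})^2}\,d\rho$ is integrable down to $\rho=C_1^{-3/2}$ and the meridian arc length from an interior point to the circle $\{t=0,\ \rho=C_1^{-3/2}\}$ is finite. Hence, by completeness of $S^2$, a meridian approaching this circle is a Cauchy curve of finite length whose endpoint must belong to $S^2$; that is, the limiting circle is a genuine part of the surface. On it $\rho$ attains its minimum, so $\grad f=0$ there, and this circle lies in the closed nowhere-dense set $S^2\setminus W$.

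Next I would extend the profile across this waist. Writing it as $\rho=\rho_{C_1}(t)$, the estimate above gives $\rho_{C_1}'(t)\to 0$ as $t\to 0^+$, so smoothness of the surface of revolution $S^2$ through the circle forces a smooth continuation of $\rho_{C_1}$ to $t<0$ with $\rho_{C_1}(0)=C_1^{-3/2}$ and $\rho_{C_1}'(0)=0$. Because the continuation again represents a biconservative surface, equivalently satisfies $A(\grad f)=-\frac{f}{2}\grad f$, and because this equation is invariant under $t\mapsto -t$ for a surface of revolution, uniqueness of the solution with the given data at the waist shows that the continuation is exactly the reflected profile $x_{C_1}$ of Proposition~\ref{prop3.4}, with the same constant $C_1$ on both sides. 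Consequently $f$ is strictly monotone on each side and tends to $0$ along the meridians, so the waist is the only critical circle and $S^2$ contains a full copy of $\tilde S^2_{C_1}$; since $\tilde S^2_{C_1}$ is itself a connected complete surface, the inclusion $\tilde S^2_{C_1}\hookrightarrow S^2$ is isometric, hence $\tilde S^2_{C_1}$ is open (invariance of domain) and closed (completeness) in $S^2$, and therefore equals it.

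Finally, the phrase ``up to reparameterization'' is handled by the intrinsic uniqueness recalled in Section~\ref{preliminaries}: a simply connected Riemannian surface admits, up to isometries of $\mathbb{R}^3$, at most one biconservative immersion with $|\grad f|>0$. Applied on $W$, this shows that two complete biconservative surfaces sharing the same constant $C_1$ differ only by a reparameterization and an ambient isometry, which is the assertion of the proposition. The main obstacle is the analysis at the waist: one must verify both that the limiting circle sits at finite meridian distance, so that completeness genuinely forces its inclusion, and that the biconservative equation admits a unique smooth continuation at the degenerate point $\grad f=0$, ruling out any alternative complete extension. The finite-distance estimate and the ODE-uniqueness at this singular point are the delicate parts.
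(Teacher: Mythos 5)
Your finite-length estimate at the waist is correct and is a nice intrinsic ingredient: since $t'_{C_1}(\rho)\sim c\,(\rho-C_1^{-3/2})^{-1/2}$ as $\rho\searrow C_1^{-3/2}$, the meridian arc length $\int\sqrt{1+(t'_{C_1})^2}\,d\rho$ is finite, so metric completeness does force the limit circle to belong to the surface. The paper does not even need this step, since it works directly with the closures $\overline{S}_{C_1}$; but as far as it goes, this part of your argument is sound.

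The genuine gap is the continuation across the waist, and it is precisely the step you flag as ``delicate'' without resolving it — which is unfortunately the entire content of the proposition. Two concrete problems. First, you tacitly assume that the piece of surface beyond the waist is a surface of revolution \emph{about the same axis}, so that everything reduces to continuing the single profile function $\rho_{C_1}(t)$ past $t=0$. A priori, Theorem \ref{th3.3} applied on the far side only yields a piece of some $S_{C_1'}$ in arbitrary ambient position; the paper handles exactly this by writing the second piece in a general position (frame $\{\overline{f}_1,\overline{f}_2,\overline{f}_3\}$ and translation $(a_1,a_2,a_3)$) and extracting from the necessary $C^\infty$-matching conditions along the gluing curve — common points, parallel normals, equal mean curvature, equal $\grad|H|$, i.e.\ system (\ref{sistem_gluing}) — that $C_1=C_1'$, $a_1=a_2=a_3=0$ and $\overline{e}_3=\pm\overline{f}_3$, so the two pieces coincide or are mirror images across the plane of the waist circle. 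Second, your appeal to ``ODE-uniqueness at the singular point'' is an assertion, not a proof: invariance of the biconservative equation under $t\mapsto -t$ shows the reflected profile \emph{is} a continuation, not that it is the \emph{only} one. At the waist $\grad f=0$, so Theorem \ref{th3.3} is inapplicable there, the initial condition $\rho'_{C_1}(0)=0$ sits at a degenerate point of the reduced profile equation where Picard--Lindel\"of does not apply directly, and CMC continuations (which are also biconservative) are only ruled out by the open-dense hypothesis on $\grad f$, which you should invoke explicitly. The paper's two-case analysis of (\ref{sistem_gluing2}) is the substitute for this missing uniqueness argument; without it, or an actual proof of uniqueness of the continuation at the degenerate point, your proof does not close. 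Minor further points: the rigidity remark of Section \ref{preliminaries} that you invoke at the end is stated for \emph{simply connected} surfaces, whereas your open dense set $W$ need not be simply connected; and note that even the paper's proof defers the converse verification (that the mirror gluing is indeed smooth) to the construction of Proposition \ref{prop3.4}.
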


\begin{proof}
We denote by $S_{C_1}$ the biconservative surface defined by
\begin{eqnarray*}
X_{C_1}(\rho,v)&=&\left(\rho\cos v,\rho \sin v, t_{C_1}(\rho)\right)\\
&=&\rho\cos v\ \overline{e}_1+\rho\sin v\ \overline{e}_2+t_{c_1}(\rho)\ \overline{e}_3,
\end{eqnarray*}
where $t_{C_1}(\rho)$ is given in Theorem $\ref{th3.3}$. The boundary of $S_{C_1}$, i.e. $\overline{S}_{C_1}\setminus S_{C_1}$, is the circle
$$
\left(C_1^{-3/2}\cos v, C_1^{-3/2} \sin v, 0\right),
$$
which lies in the $xOy$ plane (a plane perpendicular to the rotation axis $Oz$).

At a boundary point, the tangent plane to the closure $\overline{S}_{C_1}$ of $S_{C_1}$ is parallel to $Oz$. Moreover, along the boundary, the mean curvature function is constant $f_{C_1}=\frac{2}{3C_1^{-3/2}}$ and $\grad f_{C_1}=0$. Thus, we can expect to ``glue'' along the boundary two biconservative surfaces of type $S_{C_1}$ corresponding to the same $C_1$ and symmetric each other, at the level of $C^\infty$ smoothness.

In fact, we will prove that we can glue two biconservative surfaces $S_{C_1}$ and $S_{C_1'}$, at the level of $C^\infty$ smoothness, only along the boundary. More precisely, let $S_{C_1'}$ given by
$$
X_{C_1'}(\rho,v)=\left(\rho\cos v+a_1\right)\overline{f}_1+\left(\rho\sin v+a_1\right)\overline{f}_2+\left(t_{C_1'}(\rho)+a_3\right)\overline{f}_3,
$$
where $\left\{\overline{f}_1, \overline{f}_2,\overline{f}_3\right\}$ is a positively oriented orthonormal basis of $\mathbb{R}^3$ and $a_1,a_2,a_3\in\mathbb{R}$. Assume that we can glue $\overline{S}_{C_1}$ and $S_{C_1'}$ along a curve $\gamma=\gamma(s)$, $\gamma'(s)\neq0$, for any $s$, at the level of $C^\infty$ smoothness. In this case we have
\begin{equation}
\label{sistem_gluing}
\left\{
\begin{array}{lll}
\gamma(s)\in\overline{S}_{C_1}\cap \overline{S}_{C_1'}\\
\eta_{C_1}(\gamma(s))\ ||\ \eta_{C_1'}(\gamma(s))\\
H_{C_1}(\gamma(s))=H_{C_1'}(\gamma(s))\\
\left(\grad\left|H_{C_1}\right|\right)\left(\gamma(s)\right)=(\grad|H_{C_1'}|)\left(\gamma(s)\right)
\end{array}
\right.,
\end{equation}
for any $s$, where the mean curvature vector field $H_{C_1}$ is given by $H_{C_1}=\frac{1}{2}f_{C_1}\eta_{C_1}$. For $S_{C_1}$ we have
\begin{eqnarray*}
\eta_{C_1}(\rho,v)&=&\frac{X_{C_1,\rho}\times X_{C_1,v}}{\left|X_{C_1,\rho}\times X_{C_1,v}\right|}\\
&=&-\frac{1}{\sqrt{C_1}\rho^{1/3}}\cos v\ \overline{e}_1-\frac{1}{\sqrt{C_1}\rho^{1/3}}\sin v\ \overline{e}_2+\sqrt{\frac{C_1\rho^{2/3}-1}{C_1\rho^{2/3}}}\ \overline{e}_3
\end{eqnarray*}
and the mean curvature function
\begin{eqnarray*}
f_{C_1}(\rho,v)&=&\left(1+\left(t_{C_1}'(\rho)\right)^2\right)^{-3/2}\left(t_{C_1}''(\rho)+\frac{t_{C_1}'(\rho)\left(1+\left(t_{C_1}'(\rho)\right)^2\right)}{\rho}\right)\\
&=&\frac{2}{3\sqrt{C_1}\rho^{4/3}}>0.
\end{eqnarray*}
It follows that $f_{C_1}(\rho,v)=f_{C_1}(\rho)$, $f_{C_1}=2\left|H_{C_1}\right|$, and
\begin{eqnarray*}
\left(\grad f_{C_1}\right)(\rho,v) &=&\frac{1}{1+\left(t_{C_1}'(\rho)\right)^2}\ f_{C_1}'(\rho)\ X_{C_1,\rho}(\rho,v)\\
&=&-\frac{8}{9C_1^{3/2}\rho^3}\left(\left(C_1\rho^{2/3}-1\right)\cos v\ \overline{e}_1+\left(C_1\rho^{2/3}-1\right)\sin v\ \overline{e}_2+\right.\\
&&\qquad\qquad\quad\left.+\sqrt{C_1\rho^{2/3}-1}\ \overline{e}_3\right).
\end{eqnarray*}
Similar formulas hold for $S_{C_1'}$. Now, let us consider
$$
\left(\rho_1(s),v_1(s)\right)=X_{C_1}^{-1}\circ \gamma(s)\  \text{ and}\ \left(\rho_2(s),v_2(s)\right)=X_{C_1'}^{-1}\circ \gamma(s).
$$
We can rewrite $(\ref{sistem_gluing})$ as
\begin{equation}
\label{sistem_gluing2}
\left\{
\begin{array}{lll}
X_{C_1}\left(\rho_1(s),v_1(s)\right)=X_{C_1'}\left(\rho_2(s),v_2(s)\right)\\
\eta_{C_1}\left(\rho_1(s),v_1(s)\right)= \eta_{C_1'}\left(\rho_2(s),v_2(s)\right)\\
f_{C_1}\left(\rho_1(s),v_1(s)\right)=f_{C_1'}\left(\rho_2(s),v_2(s)\right)\\
\left(\grad f_{C_1}\right)\left(\rho_1(s),v_1(s)\right)=(\grad f_{C_1'})\left(\rho_2(s),v_2(s)\right)
\end{array}
\right.,
\end{equation}
for any $s$, where $\rho_1(s)\geq C_1^{-3/2}$ and $\rho_2(s)\geq \left(C_1'\right)^{-3/2}$.

First, we can observe that $C_1\rho_1^{2/3}(s)-1=0$ if and only if $C_1'\rho_2^{2/3}(s)-1=0$. Next, we consider two cases.

In the first case, when $C_1\rho_1^{2/3}(s)-1=0$ for any $s$, by a straightforward computation, from the third relation of $(\ref{sistem_gluing2})$, we can see that $C_1=C_1'$ and $\rho_1(s)=\rho_2(s)=C_1^{-3/2}$, for any $s$. Moreover, $t_{C_1}\left(\rho_1(s)\right)=0$ and $t_{C_1'}\left(\rho_2(s)\right)=0$. Then, from the first relation we get $a_1=a_2=a_3=0$ and $\langle\overline{e}_1, \overline{f}_3\rangle=\langle\overline{e}_2, \overline{f}_3\rangle=0$, i.e., $\overline{e}_3=\pm\overline{f}_3$.
Therefore, $S_{C_1}$ and $S_{C_1'}$ coincide or one of them is the symmetric of another with respect to the affine plane where the common boundary lies.

In the second case, we suppose that there exists $s_0$ such that $C_1\rho_1^{2/3}(s_0)-1\neq0$. It follows that also $C_1'\rho_2^{2/3}(s_0)-1\neq0$. Thus, we get that $C_1\rho_1^{2/3}(s)-1>0$ and $C_1'\rho_2^{2/3}(s)-1>0$ around $s_0$. By direct computation, from $(\ref{sistem_gluing2})$, we obtain $C_1=C_1'$, $a_1=a_2=a_3=0$, $\rho_1(s)=\rho_2(s)$ around $s_0$, and $\langle\overline{e}_3,\overline{f}_3\rangle=1$, i.e., $\overline{e}_3=\overline{f}_3$. Therefore, in this case $S_{C_1}$ and $S_{C_1'}$ coincide.

However, we must then check that we have a smooth gluing.

\end{proof}

\begin{proposition}[\cite{MOR3}]
Any two complete biconservative surfaces differ by an homothety of $\mathbb{R}^3$.
\end{proposition}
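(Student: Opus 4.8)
The plan is to exhibit an explicit homothety realizing the correspondence, exploiting the homogeneity of the profile function $t_{C_1}$. Since every complete biconservative surface is, up to a rigid motion of $\mathbb{R}^3$, one of the standard surfaces $\tilde{S}^2_{C_1}$ of Proposition~\ref{prop3.4}, and a rigid motion composed with a scaling is again a homothety, it suffices to compare two standard representatives $\tilde{S}^2_{C_1}$ and $\tilde{S}^2_{C_1'}$ with $C_1,C_1'>0$. Each such surface is obtained from the surface of revolution $S_{C_1}$ of Theorem~\ref{th3.3} by reflecting its profile across the plane $z=0$; since the scaling $\mathbf{x}\mapsto\lambda\mathbf{x}$ centered at the origin commutes with this reflection and fixes the rotation axis $Oz$, it carries $\tilde{S}^2_{C_1}$ onto $\tilde{S}^2_{C_1'}$ as soon as it maps $S_{C_1}$ onto $S_{C_1'}$. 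Hence it suffices to find a scaling carrying $S_{C_1}$ to $S_{C_1'}$.

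Next I would read off the scaling factor from the homogeneity of the formula for $t_{C_1}$. Setting $\lambda=\left(C_1/C_1'\right)^{3/2}$, equivalently $C_1'=C_1\lambda^{-2/3}$, the quantity $C_1\rho^{2/3}$ governing $t_{C_1}$ is invariant under the simultaneous substitution $\rho\mapsto\lambda\rho$, $C_1\mapsto C_1'$: indeed $C_1'(\lambda\rho)^{2/3}=C_1\rho^{2/3}$ and $\sqrt{C_1'}(\lambda\rho)^{1/3}=\sqrt{C_1}\rho^{1/3}$, so the argument of the logarithm in $t_{C_1'}(\lambda\rho)$ equals that in $t_{C_1}(\rho)$. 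A direct substitution then yields $t_{C_1'}(\lambda\rho)=\lambda\,t_{C_1}(\rho)$, which says precisely that $\mathbf{x}\mapsto\lambda\mathbf{x}$ sends $X_{C_1}(\rho,v)$ to $X_{C_1'}(\lambda\rho,v)$. Thus this homothety maps $S_{C_1}$ onto $S_{C_1'}$.

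Finally I would verify that domains and boundaries are respected. The generating interval $\rho>C_1^{-3/2}$ is carried to $\lambda\rho>\lambda C_1^{-3/2}=(C_1')^{-3/2}$, and the boundary circle of radius $C_1^{-3/2}$ in the plane $z=0$ is sent to the boundary circle of radius $(C_1')^{-3/2}$ in the same plane, so the two symmetric halves glue compatibly. Together with the reflection argument above, this shows that $\mathbf{x}\mapsto\lambda\mathbf{x}$ carries $\tilde{S}^2_{C_1}$ onto $\tilde{S}^2_{C_1'}$, which proves the claim.

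I expect the only genuine work to lie in identifying the correct exponent in $\lambda=\left(C_1/C_1'\right)^{3/2}$ and confirming $t_{C_1'}(\lambda\rho)=\lambda\,t_{C_1}(\rho)$; once the scaling invariance of $C_1\rho^{2/3}$ is noticed, this reduces to a routine substitution. The only mild subtlety—rather than a real obstacle—is checking that the reflection-symmetric extensions correspond, which is immediate since a central scaling preserves both the plane $z=0$ and the axis $Oz$.
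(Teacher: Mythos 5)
Your proof is correct and takes essentially the same approach as the paper: both rest on the scaling homogeneity of the profile curve, which the paper extracts via the substitution $\theta=C_1\rho^{2/3}-1$ to write $X_{C_1}=C_1^{-3/2}X_1$ (so every $\tilde{S}_{C_1}$ is the homothetic image $C_1^{-3/2}\tilde{S}_1$ of the model surface), while you verify the equivalent identity $t_{C_1'}(\lambda\rho)=\lambda\,t_{C_1}(\rho)$ with $\lambda=\left(C_1/C_1'\right)^{3/2}$ by direct substitution. Your extra checks --- that the domain $\rho>C_1^{-3/2}$ is carried to $\lambda\rho>(C_1')^{-3/2}$, that the boundary circles correspond, and that the central scaling commutes with the reflection across $z=0$ used to build the complete surface --- are sound and make explicit details the paper leaves implicit.
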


\begin{proof}
First, let us consider a reparameterization of the profile curve (we consider only the upper part)
$$
\sigma_{C_1}(\rho)=(\rho,0,t_{C_1}(\rho))\equiv (\rho,t_{C_1}(\rho)), \qquad \rho > C_{1}^{-3/2},
$$
by considering the change of coordinate $\theta=C_1\rho^{2/3}-1$, $\theta>0$. Then we get
$$
\sigma_{C_1}(\theta)=\left(\sigma^1_{C_1}(\theta),\sigma^2_{C_1}(\theta)\right)=C_1^{-3/2}\left((\theta+1)^{3/2},\frac{3}{2}\left[\sqrt{\theta^2+\theta}+\log(\sqrt{\theta}+\sqrt{\theta+1})\right]\right),
$$
where $\theta>0$, and
\begin{eqnarray*}
X_{C_1}(\theta,v)&=&C_1^{-3/2}\left( (\theta+1)^{3/2}\cos v,(\theta+1)^{3/2} \sin v,\right.\\
&&\left.\frac{3}{2}\left[\sqrt{\theta^2+\theta}+\log(\sqrt{\theta}+\sqrt{\theta+1})\right]\right),
\end{eqnarray*}
for $\theta>0$ and $v\in\mathbb{R}$, i.e.,
$$
X_{C_1}(\theta,v)=C_1^{-3/2}X_1(\theta,v), \qquad \theta>0, v\in\mathbb{R}.
$$
Thus we get $\tilde{S}_{C_1}=C_1^{-3/2}\tilde{S}_1$.
\end{proof}

\subsection{Biconservative surfaces in $\mathbb{S}^3$}

$\newline$
The local classification of biconservative surfaces with $|\grad f|>0$ in the 3-dimensional unit Euclidean sphere is given by the following result.

\begin{theorem}[\cite{CMOP}]
Let $M^2$ be a biconservative surface in $\mathbb{S}^3$ with $f(p)>0$ and $(\grad f)(p)\neq 0$ at any point $p\in M$. Then, locally, $M^2\subset \mathbb{R}^4$ can be parameterized by
\begin{equation}
\label{Phi_{C_1}}
\Phi_{C_1}(u,v)=\sigma(u)+\frac{4}{3\sqrt{C_1}k(u)^{3/4}}\left(\overline{f}_{1}(\cos v -1)+\overline{f}_{2}\sin v\right),
\end{equation}
where $C_1$ is a positive constant; $\overline{f}_{1},\overline{f}_{2}\in \mathbb{R}^4$ are two orthonormal constant vectors; $\sigma(u)$ is a curve parameterized by arclength that satisfies
\begin{equation}
\label{eq:sigma_prod_scal}
\langle \sigma(u),
\overline{f}_{1}\rangle=\frac{4}{3\sqrt{C_1}k(u)^{3/4}}, \qquad \langle \sigma(u),\overline{f}_{2}\rangle=0,
\end{equation}
and whose curvature $k=k(u)$ is a positive non-constant solution of the following ODE
\begin{equation}
\label{k''k}
k''k=\frac{7}{4}(k')^2+\frac{4}{3}k^2-4k^4.
\end{equation}
\end{theorem}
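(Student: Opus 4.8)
The plan is to turn the biconservativity condition into an ODE for a single function and then integrate the structure equations up to an explicit map into $\mathbb{R}^4$. Since $f>0$ and $\grad f\neq 0$ everywhere, I first set up the adapted orthonormal frame $\{X_1,X_2\}$ with $X_1=\grad f/|\grad f|$. The equation $A(\grad f)=-\frac{f}{2}\grad f$ recalled above says precisely that $X_1$ is a principal direction with principal curvature $\lambda_1=-f/2$; since $\trace A=f$, the second principal curvature is $\lambda_2=3f/2$, and $X_2(f)=0$.

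I would next read off the Levi-Civita connection from the Codazzi equation, which in the space form $\mathbb{S}^3$ reads $(\nabla_X A)Y=(\nabla_Y A)X$. Writing $\nabla_{X_1}X_1=\alpha X_2$ and $\nabla_{X_2}X_1=\beta X_2$ and inserting the eigendata, the $X_1$-component forces $\alpha=0$ (so the integral curves of $X_1$ are geodesics of $M$) and the $X_2$-component gives $\beta=-\frac{3X_1(f)}{4f}$. This lets me pass to Fermi-type coordinates $(u,v)$ with $\partial_u=X_1$ arclength-parametrized and $g=du^2+G(u)\,dv^2$; here $f=f(u)$ only, and since $\beta=(\log\sqrt{G})_u$ the relation integrates to $\sqrt{G}=\beta_0\,f^{-3/4}$ for a constant $\beta_0$ (after normalizing $v$).

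Then I would impose the Gauss equation. On the one hand $K=1+\det A=1-\frac34 f^2$; on the other $K=-(\sqrt{G})_{uu}/\sqrt{G}$. Substituting $\sqrt{G}\propto f^{-3/4}$ and clearing denominators yields $f f''=\frac74(f')^2+\frac43 f^2-f^4$. The distinguished $X_1$-geodesic has geodesic curvature in $\mathbb{S}^3$ equal to $|II(X_1,X_1)|=|\lambda_1|=f/2$, so setting $k=f/2$ is natural; this substitution turns the coefficient $-1$ of the quartic term into $-4$ and reproduces \eqref{k''k} exactly. The ODE admits the first integral $(k')^2=C\,k^{7/2}-16k^4-\frac{16}{9}k^2$, and $C_1$ will be a constant multiple of this first-integral constant $C$.

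The last and most delicate step is the explicit reconstruction. Because $G$, $f$, $\lambda_1,\lambda_2$ all depend on $u$ alone, the translations $v\mapsto v+v_0$ are intrinsic isometries of $(M,g)$ preserving the shape operator; by the uniqueness part of the fundamental theorem of submanifolds they are induced by a one-parameter group of rotations of $\mathbb{S}^3$. Hence $M$ is rotationally invariant and each $v$-curve is an orbit of a rotation, i.e. a round circle of radius $\sqrt{G}=\frac{4}{3\sqrt{C_1}}k^{-3/4}$ lying in a $2$-plane parallel to $\mathrm{span}\{\overline{f}_1,\overline{f}_2\}$. Taking $\sigma(u)=\Phi(u,0)$ as profile (which the normalizing term $\overline{f}_1(\cos v-1)$ guarantees) produces the stated form of $\Phi_{C_1}$, while $|\Phi|^2\equiv 1$ forces exactly the inner-product constraints \eqref{eq:sigma_prod_scal}. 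I expect the main obstacle to be precisely this reconstruction: identifying the rotational symmetry and integrating the Gauss--Weingarten system into closed form, and checking that the $\eta$-component of $\Phi_{vv}$ (equivalently $\lambda_2=3f/2$) is consistent with a circle of radius $\sqrt{G}$. This consistency is what identifies $\beta_0$ with the first-integral constant and collapses the two-parameter ODE family into the single-parameter family indexed by $C_1$.
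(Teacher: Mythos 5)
A first point of comparison: the paper does not prove this theorem at all --- it is imported from \cite{CMOP}, and what the paper adds afterwards is only the prime integral \eqref{eq:prime_integ} and the explicit polar-coordinate construction of $\sigma$. Measured against the classification argument of \cite{CMOP} and against the paper's downstream formulas, your computations are correct: $\lambda_1=-f/2$, $\lambda_2=3f/2$, $X_2f=0$; Codazzi does give $\nabla_{X_1}X_1=0$ and $\beta=-\frac{3X_1(f)}{4f}$, hence $\sqrt{G}=\beta_0 f^{-3/4}$; the Gauss equation $K=1+\det A=1-\frac34 f^2=-(\sqrt{G})_{uu}/\sqrt{G}$ yields $ff''=\frac74(f')^2+\frac43 f^2-f^4$, and $k=f/2$ (the curvature in $\mathbb{S}^3$ of the $u$-curves, which are geodesics of $M$, so $\sigma=\Phi(\cdot,0)$ indeed has curvature $|\lambda_1|=f/2$) reproduces \eqref{k''k}; your first integral is exactly \eqref{eq:prime_integ}. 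Where you genuinely diverge from \cite{CMOP} is the reconstruction: they integrate the Gauss--Weingarten system directly, while you invoke equivariance --- the $v$-translations preserve $g$ and $A$, so by rigidity they are induced by a one-parameter group of ambient isometries.

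The one real gap is the step ``orbit of a one-parameter group of rotations, i.e.\ a round circle in a plane parallel to $\Span\{\overline{f}_1,\overline{f}_2\}$.'' A one-parameter subgroup of $SO(4)$ is generically a \emph{double} rotation, whose orbits are non-planar helices on Clifford tori; simple rotations are the exception, and equal-angle rotations give great circles, excluded here since your radii vary with $u$. So circularity must be established, not read off from group invariance. The fix is short and is precisely the consistency check you deferred: along a $v$-curve parameterized by arclength, $D_{X_2}X_2=-\beta X_1+\lambda_2\eta-\Phi$, $D_{X_2}X_1=\beta X_2$ (using $\langle AX_1,X_2\rangle=0$), $D_{X_2}\eta=-\lambda_2X_2$, $D_{X_2}\Phi=X_2$, with $\beta,\lambda_2$ constant along the curve; hence $\gamma'''=-\left(1+\beta^2+\lambda_2^2\right)\gamma'$, so each $v$-curve is a planar round circle of Euclidean radius $R$ with $R^{-2}=1+\frac{9(f')^2}{16f^2}+\frac94 f^2$. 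Substituting the first integral \eqref{eq:prime_integ} collapses this to $R^{-2}=\frac{9C_1}{16}k^{3/2}$, i.e.\ $R=\frac{4}{3\sqrt{C_1}}k^{-3/4}$ --- exactly the identification of $\beta_0$ with the first-integral constant you predicted; circularity of the orbits then forces the subgroup to be a simple rotation, so all circle planes are parallel to a fixed $2$-plane, and your $|\Phi|^2\equiv1$ argument gives \eqref{eq:sigma_prod_scal} verbatim. With that two-line computation inserted, your proof is complete and matches the structure of the \cite{CMOP} argument.
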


\begin{remark}
The curve $\sigma$ lies in the totally geodesic $\mathbb{S}^2=\mathbb{S}^3\cap\Pi$, where $\Pi$ is the linear hyperspace of $\mathbb{R}^4$ orthogonal to $\overline{f}_{2}$.
\end{remark}

In the following, we will prove that such a curve $\sigma$ exists and will find a more explicit expression for $(\ref{Phi_{C_1}})$.

First, we observe that $(\ref{k''k})$ has the prime integral
\begin{equation}
\label{eq:prime_integ}
\left(k'\right)^2=-\frac{16}{9}k^2-16k^4+C_1k^{7/2}.
\end{equation}
Replacing $(\ref{eq:prime_integ})$ in $(\ref{k''k})$, since $k'\neq0$, we get
\begin{equation*}
k''=-\frac{16}{9}k-32k^3+\frac{7}{4}C_1k^{5/2}.
\end{equation*}

In order to prove the existence of such a curve $\sigma$, we will follow a slightly different method from that in \cite{CMOP}. We consider $\overline{f}_{1}=\overline{e}_3$ and $\overline{f}_{2}=\overline{e}_4$, where $\left\{\overline{e}_1,\overline{e}_2,\overline{e}_3,\overline{e}_4\right\}$ is the canonical basis of $\mathbb{R}^4$.

From $(\ref{eq:sigma_prod_scal})$ it follows that $\sigma$ can be written as
$$
\sigma(u)=\left(x(u),y(u),\frac{4}{3\sqrt{C_1}}k(u)^{-3/4},0\right).
$$
Using polar coordinates, we have $x(u)=R(u)\cos \mu(u)$ and $y(u)=R(u)\sin \mu(u)$, with $R(u)>0$.

Since $\sigma(u)\subset \mathbb{S}^3$, $R^2=x^2+y^2$ and $R>0$, we get $k>\left(\frac{16}{9C_1}\right)^{2/3}$ and
\begin{equation}
\label{eq:R}
R=\sqrt{1-\frac{16}{9C_1}k^{-3/2}}.
\end{equation}

As $k'(u)\neq 0$, we can view $u$ as a function of $k$, and considering $R=R(u(k))$ and $\mu=\mu(u(k))$, by a straightforward computation, it follows that $\sigma$ is explicitly given by
$$
\sigma(k)=\left(R\cos\mu,R\sin\mu,\frac{4}{3\sqrt{C_1}}k^{-3/4},0\right),
$$
where $R$ is given by $(\ref{eq:R})$ and
$$
\mu(k)=\pm \bigintsss_{k_0}^k {\frac{108\sqrt{\frac{\tau^2} {-16+9C_1\tau^{3/2}}}}{\sqrt{\tau^{1/2}\frac{\left(-16+9C_1\tau^{3/2}\right)\left(9C_1\tau^{3/2}-16\left(1+9\tau^2\right)\right)}{C_1}}}d\tau}+c_0,
$$
where $c_0$ is a real constant.

If we use the formula of $\sigma$ in $(\ref{Phi_{C_1}})$, we get
\begin{equation*}
\Phi_{C_1}(k,v)=\left(\sqrt{1-\frac{16}{9C_1}k^{-3/2}}\cos \mu, \sqrt{1-\frac{16}{9C_1}k^{-3/2}}\sin \mu, \frac{4\cos v}{3\sqrt{C_1}k^{3/4}},\frac{4\sin v}{3\sqrt{C_1}k^{3/4}}\right).
\end{equation*}

Next, we have to determine the maximum domain for $\Phi_{C_1}$. From $(\ref{eq:prime_integ})$, we ask that $-\frac{16}{9}k^2-16k^4+C_1k^{7/2}>0$. Since $k>0$, it is enough to find the interval where $-\frac{16}{9}-16k^2+C_1k^{3/2}>0$. We denote by
$$
L(k)=-\frac{16}{9}-16k^2+C_1k^{3/2},\qquad k>0.
$$
We can see that if $C_1>\frac{64}{3^{5/4}}$, one obtains that there exist exactly two $k_{01}\in \left(0,\left(\frac{3}{64}C_1\right)^{2}\right)$ and  $k_{02}\in \left(\left(\frac{3}{64}C_1\right)^{2}, \infty\right)$ such that $L(k_{01})=L(k_{02})=0$ and $L(k)>0$ for any $k\in \left(k_{01},k_{02}\right)$.

We note that $k_{01}>\left(\frac{16}{9C_1}\right)^{2/3}$.

Therefore, the domain of $\Phi_{C_1}$ is $\left(k_{01},k_{02}\right)\times\mathbb{R}$, where $k_{01}$ and $k_{02}$ are the vanishing points of $F$, with $0<k_{01}<k_{02}$.

\begin{remark}
We can choose $c_0=0$ in the above expression of $\mu$, by considering a linear orthogonal transformation of $\mathbb{R}^4$.
\end{remark}

We end this section, by recalling the following result from \cite{FNO}, where the necessary and sufficient conditions for an abstract Riemannian surface to admit a biconservative immersion in $N^3(c)$ were determined.

\begin{theorem}[\cite{FNO}]
\label{thm:char}
Let $(M^2,g)$ be a Riemannian surface and $c\in\mathbb{R}$ a real constant. Then $M$ can be locally isometrically embedded in a space form $N^3(c)$ as a biconservative surface with positive mean curvature having the gradient different from zero at any point $p\in M$ if and only if the Gaussian curvature $K$ satisfies $c-K(p)>0$, $(\grad K)(p)\neq 0$, and its level curves are circles in $M$ with curvature $\kappa=(3|\grad K|)/(8(c-K))$.
\end{theorem}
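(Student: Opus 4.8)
The plan is to prove both implications by passing to an adapted orthonormal frame $\{X_1,X_2\}$ with $X_1=\grad f/|\grad f|$, translating the extrinsic biconservative condition into intrinsic connection data, and then, for the converse, invoking the fundamental (Bonnet-type) existence theorem for surfaces in space forms.

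For the necessity, I would start from the biconservative equation $A(\grad f)=-\tfrac{f}{2}\grad f$, which says that $X_1$ is a principal direction with principal curvature $-f/2$; since $f=\trace A$, the orthogonal principal curvature is $3f/2$. The Gauss equation then gives
\begin{equation*}
K=c+\det A=c-\tfrac{3}{4}f^2,
\end{equation*}
so that $c-K=\tfrac34 f^2>0$ and, differentiating, $\grad K=-\tfrac{3}{2}f\,\grad f$, whence $(\grad K)(p)\neq0$ exactly when $(\grad f)(p)\neq 0$. The remaining point is the geodesic curvature of the level curves, which are the integral curves of $X_2$. Writing $\nabla_{X_1}X_1=\omega_1X_2$ and $\nabla_{X_2}X_1=\omega_2X_2$ and feeding the eigenvalues $-f/2$, $3f/2$ into the Codazzi equation $(\nabla_{X_1}A)X_2=(\nabla_{X_2}A)X_1$ (the ambient curvature term drops out because $N^3(c)$ has constant sectional curvature) should yield $\omega_1=0$ and $\omega_2=-\tfrac{3|\grad f|}{4f}$. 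Hence the level curves have geodesic curvature $|\omega_2|=\tfrac{3|\grad f|}{4f}$; commuting $X_1$ and $X_2$ on $f$ and using $X_2(f)=0$, $\omega_1=0$ shows $X_2(|\grad f|)=0$, so this curvature is constant along each level curve (the curves are genuine circles). Rewriting $|\grad f|$ and $f$ via $c-K=\tfrac34 f^2$ and $|\grad K|=\tfrac32 f|\grad f|$ converts $\kappa$ to $\tfrac{3|\grad K|}{8(c-K)}$.

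For the sufficiency, I would run the argument backwards. Given the three conditions, define the positive function $f=\tfrac{2}{\sqrt3}\sqrt{c-K}$ (so $\grad f\neq0$ is parallel to $\grad K$), set $X_1=\grad f/|\grad f|$ with $X_2$ its orthonormal completion, and define the symmetric field $A$ by $AX_1=-\tfrac f2 X_1$, $AX_2=\tfrac{3f}{2}X_2$. The Gauss equation $\det A=-\tfrac34 f^2=K-c$ holds by the definition of $f$. For Codazzi, the symmetry of $\Hess f$ forces $\omega_1=X_2(|\grad f|)/|\grad f|$, and the hypothesis that the level curves are circles (constant geodesic curvature along each curve, forcing $|\grad K|$ hence $|\grad f|$ constant on each curve) gives $X_2(|\grad f|)=0$, hence $\omega_1=0$; the prescribed value $\kappa=\tfrac{3|\grad K|}{8(c-K)}=\tfrac{3|\grad f|}{4f}$ then pins down $\omega_2=-\tfrac{3|\grad f|}{4f}$. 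These are precisely the relations that make $A$ a Codazzi tensor. With Gauss and Codazzi satisfied, the fundamental theorem for hypersurfaces of space forms provides a local isometric embedding $\phi:M^2\to N^3(c)$ whose shape operator is $A$; by construction $A(\grad f)=-\tfrac f2\grad f$, so $\phi$ is biconservative with positive mean curvature and non-vanishing $\grad f$.

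The hard part will be the Codazzi computation and its bookkeeping: one must show that the single scalar Codazzi identity for the $2\times2$ tensor $A$ is equivalent to the pair of connection relations $\omega_1=0$ and $\omega_2=-\tfrac{3|\grad f|}{4f}$, and then recognise these, intrinsically, as the statements that the gradient flow lines of $f$ are geodesics and that the level curves are circles of the prescribed curvature. The orientation and sign choice for $X_2$ (so that $\omega_2$ has the correct sign and $\kappa>0$) and the verification that $|\grad f|$ is constant along each level curve—needed both to call the level curves circles and to recover $\omega_1=0$ in the converse—are the points requiring the most care.
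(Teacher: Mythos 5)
This paper states Theorem \ref{thm:char} without proof, importing it from \cite{FNO}, and your proposal reconstructs exactly the argument used there and encoded in the paper's accompanying remark: the adapted frame $X_1=\grad f/|\grad f|$ with principal curvatures $-f/2$ and $3f/2$, the Gauss equation giving $c-K=\frac{3}{4}f^2$ (hence $\grad K=-\frac{3}{2}f\,\grad f$), the Codazzi equation reducing to $\omega_1=0$ and $\omega_2=-\frac{3|\grad f|}{4f}$ (equivalently $X_2X_1K=0$ and $\nabla_{X_2}X_2=-\frac{3X_1K}{8(c-K)}X_1$), and the fundamental theorem of hypersurfaces in $N^3(c)$ for the converse. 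Your computations check out --- including the bracket argument that $\omega_1=0$ forces $X_2(|\grad f|)=0$, so the geodesic curvature is constant on each level curve, and the conversion $\kappa=\frac{3|\grad f|}{4f}=\frac{3|\grad K|}{8(c-K)}$ --- so the proposal is correct and follows essentially the same route as the cited proof.
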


\begin{remark}[\cite{FNO}]
The level curves of $K$ are circles with constant curvature
$$
\kappa=\frac{3|\grad K|}{8(c-K)}
$$
if and only if $X_2X_1 K=0$ and $\nabla_{X_2}X_2=-\frac{3X_1 K}{8(c-K)}X_1$, where $X_1=\frac{\grad K}{|\grad K|}$ and $X_2\in C(TM)$ are two vector fields on $M$ such that $\{X_1(p),X_2(p)\}$ is a positively oriented orthonormal basis at any point $p\in M$.
\end{remark}

\begin{remark}[\cite{FNO}]
In the case of biconservative immersions, we have a rigidity result. Indeed, let $(M^2,g)$ be a simply connected Riemannian surface and $c\in\mathbb{R}$ a constant. If $M$ admits two biconservative Riemannian immersions in $N^3(c)$ such that their mean curvatures are positive with gradients different from zero at any point $p\in M$, then the two immersions differ by an isometry of $N^3(c)$.
\end{remark}

\section{Intrinsic characterisation of biconservative surfaces in $\mathbb{R}^3$ and $\mathbb{S}^3$}\label{icbs}

In \cite{FNO}, the metric of an abstract Riemannian surface $\left(M^2,g\right)$ which admits a biconservative immersion with $|\grad f|>0$ in $N^3(c)$ was not explicitly determined. We will find this metric in an explicit way.

First, we have the next proposition.

\begin{proposition}
\label{prop_3.1}
Let $(M^2,g)$ be a Riemannian surface with Gaussian curvature $K$ satisfying $(\grad K)(p)\neq 0$ and $c-K(p)>0$ at any point $p\in M$, where $c\in \mathbb{R}$ is a constant. Let $X_1=\frac{\grad K}{|\grad K|}$ and $X_2\in C(TM)$ be two vector fields on $M$ such that $\{X_1(p),X_2(p)\}$ is a positively oriented orthonormal basis at any point $p\in M$. Then $X_2X_1 K=0$ and $\nabla_{X_2}X_2=-\frac{3X_1 K}{8(c-K)}X_1$ if and only if the Riemannian metric $g$ can be locally written as $g=e^{2\varphi(u)}(du^2+dv^2)$, where $\varphi$ satisfies the equation
$$
8c e^{2\varphi(u)}\varphi'(u) +2\varphi'(u) \varphi''(u)+3\varphi'''(u)=0
$$
and the conditions
$$
K'(u)=e^{-2\varphi(u)}(2\varphi'(u)\varphi''(u)-\varphi'''(u))\neq 0
$$ and
$$
c-K(u)=c+e^{-2\varphi(u)}\varphi''(u)>0,
$$
for any $u$ in some open interval $I$.
\end{proposition}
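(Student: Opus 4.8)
The plan is to prove both implications by first converting the two intrinsic conditions into statements about the frame $\{X_1,X_2\}$, then building adapted coordinates in which $g$ takes the isothermal form, and finally matching the curvature data against $\varphi$ by a direct computation that is reversible. As preliminaries I would record that, since $X_1=\grad K/|\grad K|$ and $X_2\perp X_1$, one has $X_2K=\langle\grad K,X_2\rangle=0$, so $K$ is constant along the integral curves of $X_2$; set $\lambda:=X_1K=|\grad K|>0$. The hypothesis $\nabla_{X_2}X_2=-\tfrac{3X_1K}{8(c-K)}X_1$ fixes $b:=\langle\nabla_{X_2}X_1,X_2\rangle=\tfrac{3\lambda}{8(c-K)}$, and I claim $X_2X_1K=0$ is equivalent to $\nabla_{X_1}X_1=0$. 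Indeed, writing $a:=\langle\nabla_{X_1}X_1,X_2\rangle$ and using $X_2K=0$, the symmetry of the Hessian gives
\[
a\lambda=\Hess K(X_1,X_2)=\Hess K(X_2,X_1)=X_2X_1K ,
\]
so $X_2X_1K=0$ forces $a=0$ (as $\lambda>0$); thus the integral curves of $X_1$ are unit-speed geodesics and $\nabla_{X_1}X_2=0$.

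For the forward implication I would introduce semigeodesic (Fermi) coordinates $(\xi,v)$ built from these geodesics: parameterise one level curve of $K$ by arclength $v$ and flow along the orthogonal $X_1$-geodesics for signed length $\xi$. Standard theory gives $g=d\xi^2+G(\xi,v)\,dv^2$ with $G>0$, $G(0,v)=1$ and $\partial_\xi=X_1$, while the curves $\xi=\cst$ are exactly the level curves of $K$; in particular $K=K(\xi)$ and $\lambda=\partial_\xi K$. The crucial point is that the geodesic curvature of these level curves equals $\partial_\xi\log\sqrt G$ up to sign, and it also equals $b=\tfrac{3\lambda}{8(c-K)}$, which is a function of $\xi$ alone because $K$, $\lambda=K'(\xi)$ and $c$ are. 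Integrating $\partial_\xi\log\sqrt G=b(\xi)$ outward from the initial curve (where $G\equiv1$) shows $G=G(\xi)$ is $v$-independent. Setting $u(\xi)=\int d\xi/\sqrt{G(\xi)}$ and $e^{2\varphi}=G$ then yields $g=e^{2\varphi(u)}(du^2+dv^2)$ with $\varphi$ depending on $u$ only.

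Once $g$ has this form, both the side conditions and the equation drop out of the usual conformal-metric formulas. From $K=-e^{-2\varphi}\varphi''$ I obtain $c-K=c+e^{-2\varphi}\varphi''$ and $K'(u)=e^{-2\varphi}(2\varphi'\varphi''-\varphi''')$, exactly the stated expressions, with the standing hypotheses $c-K>0$ and $\grad K\neq0$ supplying the positivity and non-vanishing. Writing $X_1=\varepsilon e^{-\varphi}\partial_u$, $X_2=\varepsilon e^{-\varphi}\partial_v$ with $\varepsilon=\sgn K'$, a direct Christoffel computation gives $\nabla_{X_2}X_2=-\varepsilon e^{-\varphi}\varphi'\,X_1$, while $\lambda=X_1K=\varepsilon e^{-3\varphi}(2\varphi'\varphi''-\varphi''')$. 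Substituting into $\nabla_{X_2}X_2=-\tfrac{3\lambda}{8(c-K)}X_1$, clearing denominators and multiplying by $e^{3\varphi}$ collapses (the $8\varphi'\varphi''$ and $6\varphi'\varphi''$ terms combining to $2\varphi'\varphi''$) to
\[
8c\,e^{2\varphi}\varphi'+2\varphi'\varphi''+3\varphi'''=0 .
\]
Because every step here is an algebraic equivalence, reading it backwards proves the converse: if $g=e^{2\varphi(u)}(du^2+dv^2)$ satisfies the ODE and the side conditions, then $X_2X_1K=0$ is automatic ($\lambda=\lambda(u)$ and $X_2$ differentiates only in $v$) and the ODE returns $\nabla_{X_2}X_2=-\tfrac{3X_1K}{8(c-K)}X_1$.

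I expect the genuine difficulty to lie in the forward construction of the adapted coordinates — precisely the step showing that $G$ is independent of $v$. Everything else is either the Hessian-symmetry observation or routine bookkeeping for a conformally flat metric, but the $v$-independence of the conformal factor is exactly where the hypothesis $X_2X_1K=0$ is indispensable: it is what makes the $X_1$-curves geodesic and the level-curve curvature $b$ a function of $K$ alone, and without it the Fermi metric need not reduce to $d\xi^2+G(\xi)\,dv^2$, so no $\varphi$ of a single variable would exist.
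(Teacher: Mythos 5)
Your proof is correct, and it is self-contained precisely where the paper's is not: the paper's own proof of Proposition \ref{prop_3.1} simply cites \cite{FNO} for the key fact that the two frame conditions force the metric into the local form $g=e^{2\varphi(u)}(du^2+dv^2)$ with $\varphi$ a function of $u$ alone, and then performs exactly the conformal computation you do in your last step ($K=-e^{-2\varphi}\varphi''$, $X_1=e^{-\varphi}\partial_u$, $X_2=e^{-\varphi}\partial_v$, $\nabla_{X_2}X_2=-e^{-2\varphi}\varphi'\partial_u$, equivalence with $8ce^{2\varphi}\varphi'+2\varphi'\varphi''+3\varphi'''=0$, converse ``immediate''). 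What you do differently is to re-prove the cited step: the Hessian-symmetry identity $\lambda a=\Hess K(X_1,X_2)=\Hess K(X_2,X_1)=X_2X_1K$ showing that $X_2X_1K=0$ is equivalent to $\nabla_{X_1}X_1=0$, the Fermi coordinates $g=d\xi^2+G(\xi,v)\,dv^2$ along the resulting $X_1$-geodesics (which coincide with the Fermi geodesics by uniqueness, so $K=K(\xi)$ follows from $X_2K=0$), and the integration of $\partial_\xi\log\sqrt G=b(\xi)=3K'(\xi)/\bigl(8(c-K(\xi))\bigr)$ from $G(0,v)\equiv 1$ to get $G=G(\xi)$ --- this is exactly the content of the lemma the paper imports from \cite{FNO}. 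Your route buys a proof readable without the external reference and makes transparent where each hypothesis enters ($X_2X_1K=0$ for the geodesic foliation, the $\nabla_{X_2}X_2$ condition for the $v$-independence of the conformal factor), and your $\varepsilon=\sgn K'$ bookkeeping absorbs in one stroke the paper's separate remark that $K'<0$ yields the same ODE; the paper's route buys brevity at the price of resting the forward implication entirely on \cite{FNO}. All your computations check (e.g., $\langle\nabla_{X_2}X_2,X_1\rangle=-\partial_\xi\log\sqrt G$ in the Fermi chart, and $8\varphi'\varphi''-6\varphi'\varphi''=2\varphi'\varphi''$ after clearing $8(c-K)e^{3\varphi}$), so the argument stands as a complete alternative proof.
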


\begin{proof}
In \cite{FNO} we have seen that if we have a Riemannian surface with Gaussian curvature $K$ satisfying $(\grad K)(p)\neq 0$ and $c-K(p)>0$ at any point $p\in M$, where $c\in \mathbb{R}$ is a constant, $X_1=\frac{\grad K}{|\grad K|}$ and $X_2\in C(TM)$ are two vector fields on $M$ such that $\{X_1(p),X_2(p)\}$ is a positively oriented orthonormal basis at any point $p\in M$ such that $X_2X_1 K=0$ and $\nabla_{X_2}X_2=-\frac{3X_1 K}{8(c-K)}X_1$, then the Riemannian metric $g$ can be locally written as
$$
g=e^{2\varphi(u)}(du^2+dv^2),
$$
where $(W;u,v)$ is a positive isothermal chart.

Let $p_0$ be a fixed point in $M$ and $X=X(u,v)$ be a local parametrization of $M$ in a neighborhood $U\subset M$ of $p_0$, positively oriented.

Identifying $K$ with $K\circ X $ we get the following formulas. The Gaussian curvature is given by $K(u)=-e^{-2\varphi(u)}\varphi''(u)$, $(\grad K)(u)=e^{-2\varphi(u)}K'(u)\partial_u$ and $|\grad K|=e^{-\varphi(u)}|K'(u)|$. By hypothesis, we have that $c-K(u)>0$, and therefore
\begin{equation*}
c + e^{-2\varphi(u)}\varphi''(u)>0,
\end{equation*}
for any $u$.

Since $\grad K\neq 0$ at any point of $M$, we can assume that $K'(u)>0$ for any $u$. Then it follows that $X_1=e^{-\varphi(u)}\partial_u$ and $X_2=e^{-\varphi(u)}\partial_v$. It is easy to see that $\nabla_{X_2}X_2=-e^{-2\varphi}\varphi'(u)\partial_u$. Thus $\nabla_{X_2}X_2=-\frac{3X_1 K}{8(c-K)}X_1$ if and only if
\begin{equation*}
-e^{-2\varphi}\varphi'(u)\partial_u = -\frac{3e^{-4\varphi(u)}\left(2\varphi'(u)\varphi''(u)-\varphi'''(u)\right)}{8\left(c+e^{-2\varphi(u)}\varphi''(u)\right)}\partial_u,
\end{equation*}
which means that
\begin{equation}
\label{eq:ode_prop}
8c e^{2\varphi(u)}\varphi'(u) +2\varphi'(u) \varphi''(u)+3\varphi'''(u)=0.
\end{equation}

The converse is immediate.
\end{proof}

\begin{remark}
In Proposition $\ref{prop_3.1}$, if we assume that $K'(u)<0$ for any $u$, we obtain the same ODE for $\varphi$ to satisfy.
\end{remark}

Applying the above result to the case $c=0$ we get our next result.
\begin{proposition}
\label{prop4.3}
Let $\left(M^2,g\right)$ be a Riemannian surface with Gaussian curvature $K$ satisfying $(\grad K)(p)\neq 0$ and $K(p)<0$ at any point $p\in M$. Let $X_1=\frac{\grad K}{|\grad K|}$ and $X_2\in C(TM)$ be two vector fields on $M$ such that $\{X_1(p),X_2(p)\}$ is a positively oriented orthonormal basis at any point $p\in M$. Then $X_2X_1 K=0$ and $\nabla_{X_2}X_2=\frac{3X_1 K}{8K}X_1$ if and only if the Riemannian metric $g$ can be locally written as
$$
g_C(u,v)=C\left(\cosh u\right)^6(du^2+dv^2), \qquad u\neq0,
$$
where $C\in \mathbb{R}$ is a positive constant.
\end{proposition}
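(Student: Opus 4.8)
The plan is to specialize Proposition \ref{prop_3.1} to the case $c=0$ and then integrate the resulting ordinary differential equation explicitly. By that proposition, the two geometric conditions $X_2X_1K=0$ and $\nabla_{X_2}X_2=\frac{3X_1K}{8K}X_1$ are equivalent to the statement that, in a positive isothermal chart, $g=e^{2\varphi(u)}(du^2+dv^2)$ with $\varphi$ solving
\[
2\varphi'\varphi''+3\varphi'''=0,
\]
subject to $K'(u)=e^{-2\varphi}(2\varphi'\varphi''-\varphi''')\neq0$ and $-K(u)=e^{-2\varphi}\varphi''>0$. Thus the whole content of the proposition reduces to showing that the solutions of this ODE satisfying the two sign constraints are precisely the functions $\varphi$ with $e^{2\varphi}=C(\cosh u)^6$, up to a harmless change of the isothermal chart. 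The converse direction of the ``if and only if'' will then follow by feeding $g_C$ back through Proposition \ref{prop_3.1}.

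First I would set $\psi=\varphi'$ and use $2\psi\psi'=(\psi^2)'$ to rewrite the equation as $(\psi^2)'+3\psi''=0$; one integration yields the first integral $\psi^2+3\psi'=a$ for some constant $a\in\mathbb{R}$. The decisive step is the sign analysis. Since $K=-e^{-2\varphi}\varphi''$, the hypothesis $K<0$ forces $\varphi''=\psi'>0$ at every point, hence $a-\psi^2=3\psi'>0$; this simultaneously forces $a>0$ and confines $\psi$ to the bounded branch $|\psi|<\sqrt a=:b$, ruling out both the complementary (cotangent-hyperbolic) branch and the cases $a\le0$. Separating variables in $3\psi'=b^2-\psi^2$ and integrating gives $\psi=b\tanh(bu/3+c_1)$, and a further integration yields $\varphi=3\log\cosh(bu/3+c_1)+c_2$, so that $e^{2\varphi}=e^{2c_2}\cosh^6(bu/3+c_1)$. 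I expect this branch selection to be the main point to get right, together with the bookkeeping of constants in the next step.

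Finally I would normalize the chart. Replacing $(u,v)$ by $\bar u=\tfrac{b}{3}u+c_1$, $\bar v=\tfrac{b}{3}v$ scales the flat factor by $(3/b)^2$ and absorbs the phase, producing $g=C(\cosh\bar u)^6(d\bar u^2+d\bar v^2)$ with $C=(9/b^2)e^{2c_2}$; since $e^{2\varphi}>0$, the constant $C$ is automatically positive. It remains to read off the leftover constraint: with $\varphi=\tfrac12\log C+3\log\cosh u$ a direct computation gives $2\varphi'\varphi''-\varphi'''=24\tanh u\,(\cosh u)^{-2}$, so $K'=e^{-2\varphi}\cdot24\tanh u\,(\cosh u)^{-2}$ vanishes exactly at $u=0$, which explains the restriction $u\neq0$ in the statement, while $K=-3e^{-2\varphi}(\cosh u)^{-2}<0$ holds throughout. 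These same computations establish the converse, namely that $g_C$ satisfies the ODE and both sign conditions, and hence by Proposition \ref{prop_3.1} the two geometric conditions hold for $g_C$ on $u\neq0$.
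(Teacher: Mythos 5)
Your proposal is correct and follows essentially the same route as the paper: both specialize Proposition \ref{prop_3.1} to $c=0$, integrate the ODE $3\varphi'''+2\varphi'\varphi''=0$ (your first integral $(\varphi')^2+3\varphi''=a$ with the bound $|\varphi'|<\sqrt{a}$ forced by $\varphi''>0$ yields exactly the paper's solution, whose integrand $\frac{1-e^{-\frac{2a}{3}(\tau+u_0)}}{1+e^{-\frac{2a}{3}(\tau+u_0)}}$ is just $\tanh\bigl(\frac{a}{3}(\tau+u_0)\bigr)$), and then rescale the isothermal chart to reach $g_C=C(\cosh u)^6(du^2+dv^2)$ with $u\neq0$ accounting for the vanishing of $K'$ at $u=0$. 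Your branch analysis and the explicit verification of the sign conditions are sound, so no gaps.
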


\begin{proof}
For $c=0$, equation (\ref{eq:ode_prop}) becomes

\begin{equation}
\label{eq:ode_cor1}
3\varphi'''(u)+2\varphi'(u)\varphi''(u)=0.
\end{equation}
Since $K=-e^{-2\varphi(u)}\varphi''(u)<0$, we obtain $\varphi''(u)>0$ for any $u$.

By a straightforward computation, we get the unique solution of (\ref{eq:ode_cor1})
\begin{equation}
\label{good_solution}
\varphi(u)=a\int_{u_0'}^u{\frac{1-e^{-\frac{2a}{3}\left(\tau+u_0\right)}}{1+e^{-\frac{2a}{3}\left(\tau+u_0\right)}}d\tau} +b_1, \qquad u\in I,
\end{equation}
where $a,b_1,u_0\in \mathbb{R}$, $I$ is an open interval and $u_0'\in I$ is arbitrary fixed.

Next, we will compute the integral in $(\ref{good_solution})$, also imposing $K'(u)>0$. First, we will show that $K'(u)>0$ if and only if $u+u_0>0$.

Since
\begin{equation}
\label{eq:K}
K(u)=-e^{-2\varphi(u)}\varphi''(u), \qquad u\in I,
\end{equation}
we have that
\begin{equation*}
K'(u)=e^{-2\varphi(u)}\left(2\varphi'(u)\varphi''(u)-\varphi'''(u)\right)>0, \qquad u\in I,
\end{equation*}
if and only if
\begin{equation}
\label{eq:K'}
2\varphi'(u)\varphi''(u)-\varphi'''(u)>0,\qquad u\in I.
\end{equation}
From $(\ref{good_solution})$ we get
$$
\varphi'''(u)=-\frac{8a^3 e^{-\frac{2a}{3}(u+u_0)}\left(1-e^{-\frac{2a}{3}\left(u+u_0\right)}\right)}{9\left(1+e^{-\frac{2a}{3}(u+u_0)}\right)^3}.
$$
If we replace the first, the second and the third derivatives of $\varphi$ in $(\ref{eq:K'})$, we obtain that $K'(u)>0$ if and only if $a^3\left(1-e^{-\frac{2a}{3}\left(u+u_0\right)}\right)>0$. It is easy to check that this is equivalent to $u+u_0>0$ if either $a>0$ or $a<0$.

Therefore, the solution is
$$
\varphi(u)=a\int_{u_0'}^u{\frac{1-e^{-\frac{2a}{3}(\tau+u_0)}}{1+e^{-\frac{2a}{3}(\tau+u_0)}}d\tau} +b_1, \qquad u\in I,u+u_0>0,
$$
where $b_1,u_0\in \mathbb{R}, a\in \mathbb{R}^\ast$, $I$ is an open interval and $u_0'\in I$ is arbitrary fixed.

Then, in order to compute the integral in $(\ref{good_solution})$, we consider some changes of variables and obtain
$$
\varphi(u)=3 \log\left(\cosh \frac{u}{3}\right)+b_2, \qquad u\in I,u>0,
$$
where $b_2\in \mathbb{R}$ and $I$ is an open interval.

If we impose $K'(u)<0$, then from $(\ref{good_solution})$, following the same steps as above, we obtain
$$
\varphi(u)=3 \log\left(\cosh \frac{u}{3}\right)+b_2, \qquad u\in I,u<0,
$$
where $b_2\in \mathbb{R}$ and $I$ is an open interval.

Since $g=e^{2\varphi(u)}\left(du^2+dv^2\right)$, by a new change of coordinates, we come to the conclusion, i.e.,
$$
g_C=C \left(\cosh u\right)^6\left(du^2+dv^2\right),
$$
where $(W;u,v)$ is a positive isothermal chart, $u\neq 0$, and $C\in \mathbb{R}$ is a positive constant.
\end{proof}

Using Proposition $\ref{prop_3.1}$ in the case when $c=1$, we obtain the following result.
\begin{proposition}
\label{prop4.5}
Let $(M^2,g)$ be a Riemannian surface with Gaussian curvature $K$ satisfying $(\grad K)(p)\neq 0$ and $1-K(p)>0$ at any point $p\in M$. Let $X_1=\frac{\grad K}{|\grad K|}$ and $X_2\in C(TM)$ be two vector fields on $M$ such that $\{X_1(p),X_2(p)\}$ is a positively oriented orthonormal basis at any point $p\in M$. Then $X_2X_1 K=0$ and $\nabla_{X_2}X_2=-\frac{3X_1 K}{8(1-K)}X_1$ if and only if the Riemannian metric $g$ can be locally written as $g=e^{2\varphi(u)}(du^2+dv^2)$, where $u=u(\varphi)$ satisfies

$$
u=u(\varphi)=\pm \int_{\varphi_0}^\varphi \frac{d\tau}{\sqrt{\frac{b}{3}e^{-\frac{2}{3}\tau}-e^{2\tau}+a}}+c, \qquad \varphi \in I,
$$
where $a,b,c\in \mathbb{R}$, $a>0$, $b<0$, and $\frac{b}{3}e^{-\frac{2}{3}\varphi}-e^{2\varphi}+a>0$ for every $\varphi \in I$, where $I$ is some open interval.
\end{proposition}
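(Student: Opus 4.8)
The plan is to reduce the two geometric conditions to a single ODE for $\varphi$ via Proposition \ref{prop_3.1}, and then to integrate that ODE explicitly. By Proposition \ref{prop_3.1} applied with $c=1$, the conditions $X_2X_1K=0$ and $\nabla_{X_2}X_2=-\frac{3X_1K}{8(1-K)}X_1$ are equivalent to $g=e^{2\varphi(u)}(du^2+dv^2)$ with $\varphi$ solving
$$
8e^{2\varphi}\varphi'+2\varphi'\varphi''+3\varphi'''=0,
$$
subject to $K'\neq 0$ and $1-K>0$. The first thing I would observe is that the left-hand side is an exact derivative: it equals $\frac{d}{du}\!\left(4e^{2\varphi}+(\varphi')^2+3\varphi''\right)$. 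A single integration therefore yields the first integral
$$
3\varphi''+(\varphi')^2+4e^{2\varphi}=a
$$
for a real constant $a$, reducing the problem to a second-order autonomous equation.

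Next I would reduce the order once more by treating $w:=(\varphi')^2$ as a function of $\varphi$. Since $\frac{dw}{d\varphi}\,\varphi'=2\varphi'\varphi''$, along solutions with $\varphi'\neq 0$ we have $3\varphi''=\frac{3}{2}\frac{dw}{d\varphi}$, so the first integral becomes the first-order linear ODE
$$
\frac{3}{2}\frac{dw}{d\varphi}+w=a-4e^{2\varphi}.
$$
Solving with integrating factor $e^{2\varphi/3}$ and relabeling the new integration constant as $b/3$ gives
$$
(\varphi')^2=w=\frac{b}{3}e^{-\frac{2}{3}\varphi}-e^{2\varphi}+a.
$$
Separating variables in $\varphi'=\pm\sqrt{\frac{b}{3}e^{-\frac{2}{3}\varphi}-e^{2\varphi}+a}$ and integrating produces exactly the claimed formula for $u=u(\varphi)$, valid on any interval $I$ where the radicand is positive.

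It then remains to pin down the signs $a>0$, $b<0$, and this is where the geometric constraints enter. Substituting $w$ back into the first integral gives $\varphi''=-e^{2\varphi}-\frac{b}{9}e^{-\frac{2}{3}\varphi}$, whence
$$
K=-e^{-2\varphi}\varphi''=1+\frac{b}{9}e^{-\frac{8}{3}\varphi},\qquad 1-K=-\frac{b}{9}e^{-\frac{8}{3}\varphi}.
$$
Thus $1-K>0$ forces $b<0$; since $-e^{2\varphi}$ and, for $b<0$, also $\frac{b}{3}e^{-\frac{2}{3}\varphi}$ are negative, positivity of $w=(\varphi')^2$ on $I$ forces $a>0$; and $K'\neq 0$ is equivalent to $\varphi'\neq 0$, i.e. to the stated radicand condition $\frac{b}{3}e^{-\frac{2}{3}\varphi}-e^{2\varphi}+a>0$. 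The converse is then immediate: starting from a $\varphi$ given by the integral formula one reverses these computations and invokes the ``if'' direction of Proposition \ref{prop_3.1}. I expect no genuine difficulty in the ODE manipulations, which are routine once the exact-derivative structure is spotted; the one point requiring care --- and the main difference from the $c=0$ case in Proposition \ref{prop4.3} --- is that the final quadrature does not evaluate to an elementary function, so the answer must be left in integral form and the interval $I$ together with the signs of $a$ and $b$ must be read off from the formula for $K$ rather than from an explicit primitive.
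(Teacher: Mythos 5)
Your proposal is correct and follows essentially the same route as the paper: apply Proposition \ref{prop_3.1} with $c=1$, spot that the ODE is the exact derivative of $3\varphi''+(\varphi')^2+4e^{2\varphi}$, pass to the first integral, reduce order by viewing $(\varphi')^2$ as a function of $\varphi$, and separate variables. Your write-up in fact makes explicit two points the paper compresses into ``by direct computation'' --- the integrating-factor solution yielding $(\varphi')^2=\frac{b}{3}e^{-2\varphi/3}-e^{2\varphi}+a$ and the identity $1-K=-\frac{b}{9}e^{-8\varphi/3}$ forcing $b<0$ --- so no gap remains.
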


\begin{proof}
When $c=1$, equation (\ref{eq:ode_prop}) becomes
\begin{equation}
\label{eq:ode_cor2}
3\varphi'''(u)+2\varphi'(u)\varphi''(u)+8e^{2\varphi(u)}\varphi'(u)=0.
\end{equation}

We note that $(\ref{eq:ode_cor2})$ can be written as $\left(3\varphi''+(\varphi')^2+4e^{2\varphi}\right)'(u)=0$ and, integrating, we obtain the prime integral of $(\ref{eq:ode_cor2})$
\begin{equation*}
3\varphi''(u)+\left(\varphi'(u)\right)^2+4e^{2\varphi(u)}=a,
\end{equation*}
where $a\in \mathbb{R}$ is a constant. From this equation we have that
\begin{equation}
\label{eq:c=1}
e^{-2\varphi(u)}\varphi''(u)=\frac{1}{3}a e^{-2\varphi(u)}-\frac{1}{3}e^{-2\varphi(u)}\left(\varphi'(u)\right)^2-\frac{4}{3}.
\end{equation}
Since $K(u)=-e^{-2\varphi(u)}\varphi''(u)$, from $(\ref{eq:c=1})$, we obtain that $1-K(u)>0$ for any $u$ if and only if $e^{-2\varphi(u)}\left(a-\left(\varphi'(u)\right)^2\right)>1$.

It is easy to see that $a$ has to be grater than $(\varphi'(u))^2$, so that $a$ is a positive real number.

We note that, if $\varphi'=0$, then $K=0$ and $\grad K=0$, which contradicts the hypotheses. Therefore, we will assume that $\varphi'\neq 0$.

As $\varphi'(u)\neq 0$, we can view $u$ as a function of $\varphi$ and, by direct computation we get
$$
u(\varphi)=\pm \int_{\varphi_0}^\varphi {\frac{d\tau}{\sqrt{\frac{b}{3}e^{-\frac{2}{3}\tau}-e^{2\tau}+a}}}+c, \qquad \varphi\in I,
$$
where $a,b,c\in \mathbb{R}$, $a>0$, $b<0$, and $\frac{b}{3}e^{-\frac{2}{3}\varphi}-e^{2\varphi}+a>0$, for every $\varphi \in I$, where $I$ is some open interval.
\end{proof}

We note that in Proposition $\ref{prop4.5}$, if $K'>0$, then
$$
u(\varphi)=\int_{\varphi_0}^\varphi {\frac{d\tau}{\sqrt{\frac{b}{3}e^{-\frac{2}{3}\tau}-e^{2\tau}+a}}}+c, \qquad \varphi\in I,
$$
and, if $K'<0$, then
$$
u(\varphi)=- \int_{\varphi_0}^\varphi {\frac{d\tau}{\sqrt{\frac{b}{3}e^{-\frac{2}{3}\tau}-e^{2\tau}+a}}}+c, \qquad \varphi\in I.
$$

\begin{remark}
A similar result to Proposition $\ref{prop4.5}$ can be obtained when $c=-1$.
\end{remark}

\section{Global properties of biconservative surfaces in $\mathbb{R}^3$ and $\mathbb{S}^3$}
In the previous section we determined (locally) all abstract Riemannian surfaces which admit biconservative immersions with $\grad f\neq 0$ in $\mathbb{R}^3$ or $\mathbb{S}^3$  (and we know that such an immersion is unique). Next, we will find the explicit expressions of complete biconservative surfaces in $\mathbb{R}^3$ and $\mathbb{S}^3$.

\subsection{Biconservative surfaces in $\mathbb{R}^3$}

$\newline$
In the case of complete biconservative surfaces in $\mathbb{R}^3$, we have the following result

\begin{theorem}
\label{main_th1}
Let $\left(\mathbb{R}^2,g_C=C \left(\cosh u\right)^6\left(du^2+dv^2\right)\right)$ be a Riemannian surface, where $C$ is a positive constant. Then we have:
\begin{itemize}
\item[(a)] the metric on $\mathbb{R}^2$ is complete;
\item[(b)] $K_C(u,v)=K_C(u)=-\frac{3}{C\left(\cosh u\right)^8}<0$, $K'_C(u)=\frac{24}{C}\frac{\sinh u}{\left(\cosh u\right)^9}$, and therefore $\grad K_C\neq 0$ at any point of $\mathbb{R}^2\setminus Ov$;

\item[(c)] the immersion $X_C:\left(\mathbb{R}^2,g_C\right)\to \mathbb{R}^3$ given by
    $$
    X_C(u,v)=\left(\sigma_C^1(u)\cos 3v, \sigma_C^1(u)\sin 3v, \sigma_C^2(u)\right)
    $$
    is biconservative in $\mathbb{R}^3$, where
    $$
    \sigma_C^1(u)=\frac{C^{1/2}}{3}\left(\cosh u\right)^3, \quad
    \sigma_C^2(u)=\frac{C^{1/2}}{2}\left(\frac{1}{2}\sinh {2u}+u\right), \qquad u \in \mathbb{R}.
    $$
\end{itemize}
\end{theorem}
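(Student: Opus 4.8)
The plan is to treat the three assertions separately, with (c) carrying essentially all of the content. For (a) I would not analyze geodesics directly but instead apply Proposition \ref{prop1}. Since $\cosh u\geq 1$ everywhere, $C(\cosh u)^6\geq C$, so
$g_C-C(du^2+dv^2)=C\left((\cosh u)^6-1\right)(du^2+dv^2)$
is non-negative definite at every point of $\mathbb{R}^2$. The metric $C(du^2+dv^2)$ is a constant multiple of the Euclidean metric and hence complete, so Proposition \ref{prop1} yields completeness of $(\mathbb{R}^2,g_C)$ at once. For (b) I would write $g_C=e^{2\varphi}(du^2+dv^2)$ with $\varphi(u)=\tfrac12\log C+3\log\cosh u$; since $\varphi$ depends on $u$ alone, the conformal curvature formula gives $K_C=-e^{-2\varphi}\varphi''$, and with $\varphi'=3\tanh u$, $\varphi''=3(\cosh u)^{-2}$ this produces $K_C(u)=-3/(C(\cosh u)^8)$ and $K_C'(u)=24\sinh u/(C(\cosh u)^9)$. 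The latter vanishes precisely on $\{u=0\}=Ov$, so $\grad K_C\neq 0$ off $Ov$.

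For (c) I would first confirm that $X_C$ is an isometric immersion and then verify the biconservative equation $A(\grad f)=-\tfrac{f}{2}\grad f$. Writing $r=\sigma_C^1$, $z=\sigma_C^2$, one computes $r'=C^{1/2}(\cosh u)^2\sinh u$ and, using $\cosh 2u+1=2\cosh^2 u$, $z'=C^{1/2}\cosh^2 u$; hence the induced metric is diagonal with $\langle X_{C,u},X_{C,v}\rangle=0$, $|X_{C,v}|^2=9r^2=C(\cosh u)^6$, and $|X_{C,u}|^2=(r')^2+(z')^2=C(\cosh u)^4(\sinh^2 u+1)=C(\cosh u)^6$, so $X_C$ pulls the Euclidean metric of $\mathbb{R}^3$ back to $g_C$. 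Because $X_C$ is a surface of revolution with angular variable $3v$, the coordinate fields $\partial_u$ (meridian) and $\partial_v$ (parallel) are orthogonal principal directions; since $f$ depends on $u$ alone, $\grad f$ is proportional to $\partial_u$, an eigenvector of the shape operator $A$ with meridian principal curvature $k_1$. Thus the tensorial condition $A(\grad f)=-\tfrac{f}{2}\grad f$ collapses to the single scalar identity $k_1=-\tfrac{f}{2}=-\tfrac{k_1+k_2}{2}$, i.e. $k_2=-3k_1$. Computing the unit normal $\eta=(\cosh u)^{-1}(-\cos 3v,-\sin 3v,\sinh u)$ and the second fundamental form, I would obtain $k_1=-(C^{1/2}(\cosh u)^4)^{-1}$ and $k_2=3(C^{1/2}(\cosh u)^4)^{-1}$, which satisfy $k_2=-3k_1$ identically and give $f=k_1+k_2=2/(C^{1/2}(\cosh u)^4)>0$; this proves (c). Alternatively, one can bypass recomputing $A$ by setting $\rho=\sigma_C^1(u)$: with $C_1=3^{2/3}C^{-1/3}$ one gets $C_1\rho^{2/3}-1=\sinh^2 u$ and $\sqrt{C_1}\rho^{1/3}+\sqrt{C_1\rho^{2/3}-1}=e^u$, so the profile of $X_C$ satisfies $\sigma_C^2(u)=t_{C_1}(\rho)$ exactly, exhibiting $X_C$ as a reparametrization of the already-classified local biconservative surface of Theorem \ref{th3.3}.

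The only real obstacle is the biconservativity check in (c), and it is mild: everything rests on the reduction of $A(\grad f)=-\tfrac{f}{2}\grad f$ to the scalar relation $k_2=-3k_1$, forced by the rotational symmetry that makes $\grad f$ a principal direction. The hyperbolic-trigonometric bookkeeping in the first and second fundamental forms is routine once $r',z',r'',z''$ are in hand, and the clean cancellations—chiefly $\sinh^2 u+1=\cosh^2 u$—are exactly what make the induced metric equal $g_C$ and the principal curvatures satisfy $k_2=-3k_1$. Parts (a) and (b) require only the comparison principle and the conformal curvature formula, respectively.
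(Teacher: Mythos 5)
Your proofs of (a) and (b) coincide with the paper's: completeness via Proposition \ref{prop1} by comparison with the flat metric, and the curvature from $K=-e^{-2\varphi}\varphi''$ with $\varphi(u)=\tfrac12\log C+3\log\cosh u$. For (c) you take a genuinely different route. The paper works backwards from the local classification: it computes the first fundamental form of the classified surface $X_{C_1}^+(\theta,v)$, applies the change of coordinates $(\theta,v)=\left((\sinh u)^2,3v\right)$ to recognize $g_{C_1}^+=g_C$ for $C_1=(9/C)^{1/3}$, rewrites the profile curve in the $u$-variable on the half-plane $u>0$, reflects across $O\rho$ to cover $u<0$, and then asserts that the resulting map on all of $\mathbb{R}^2$ is a biconservative embedding. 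You instead verify everything directly and extrinsically: $X_C$ pulls the Euclidean metric back to $g_C$ (via $\sinh^2u+1=\cosh^2u$), the coordinate directions are principal by rotational symmetry, and since $f=f(u)$ makes $\grad f$ proportional to the meridian direction, the equation $A(\grad f)=-\tfrac{f}{2}\grad f$ reduces, wherever $\grad f\neq0$, to $k_1=-\tfrac{f}{2}$, i.e. $k_2=-3k_1$; your computed curvatures $k_1=-C^{-1/2}(\cosh u)^{-4}$ and $k_2=3C^{-1/2}(\cosh u)^{-4}$ satisfy this identically on all of $\mathbb{R}^2$, and at $u=0$ the equation holds trivially because $\grad f=0$ there. (All these formulas check out against an independent computation of $\eta$ and the second fundamental form.) What your approach buys is a uniform verification across the seam $u=0$, making explicit what the paper leaves at the level of ``it is easy to see'' after its symmetry gluing; the cost is redoing the shape-operator computation rather than reusing Theorem \ref{th3.3}. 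Your closing alternative --- setting $\rho=\sigma_C^1(u)$, $C_1=3^{2/3}C^{-1/3}$ and checking $C_1\rho^{2/3}-1=\sinh^2u$ and $\sigma_C^2(u)=t_{C_1}(\rho)$ --- is exactly the paper's identification for $u>0$ (note $3^{2/3}C^{-1/3}=(9/C)^{1/3}$), so the two constructions are confirmed to agree.
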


\begin{proof}
In order to prove $(a)$, we will use Proposition $\ref{prop1}$ .

Consider $g_0=du^2+dv^2$ the Euclidian  metric on $\mathbb{R}^2$, which is complete. Then, denote by $\tilde{g}$ the Riemannian metric $\tilde{g}=(\cosh u)^6 g_0$, and note that $\tilde{g}-g_0=\left(\left(\cosh u\right)^6-1\right)g_0$ is non-negative definite at any point of $\mathbb{R}^2$. Therefore $\tilde{g}$ is also complete and since $g_C=C\tilde{g}$, it follows that $\left(\mathbb{R}^2,g_C\right)$ is complete.

To prove $(b)$, we consider the formula $(\ref{eq:K})$, with $\varphi(u)=\log\left(\sqrt{C}\left(\cosh u\right)^3\right)$ and obtain that the Gaussian curvature $K_C(u,v)$ is equal to
$$
K_C(u,v)=K_C(u)=-\frac{3}{C\left(\cosh u\right)^8}
$$
and
$$
K_C'(u)=\frac{24}{C}\frac{\sinh u}{\left(\cosh u\right)^9}.
$$

Therefore, $K_C'(u)>0$ if and only if $u>0$, $K_C'(u)<0$ if and only if $u<0$, and $K_C'(0)=0$. Since
$$
\left(\grad K_C\right)(u,v)=\frac{1}{C}e^{-6\log\left(\cosh u\right)}K_C'(u)\partial_u,
$$
we have $\grad K_C\neq 0$ at any point of $\mathbb{R}^2\setminus Ov$, which is an open dense subset of $\mathbb{R}^2$.

We begin the proof of $(c)$, recalling that if we have a biconservative surface of revolution in $\mathbb{R}^3$, with non-constant mean curvature, its profile curve is
\begin{eqnarray*}
\sigma_{C_1}^+(\theta)&=&\left(\sigma_{C_1}^1(\theta),\sigma_{C_1}^2(\theta)\right)\\
&=& C_1^{-3/2}\left((\theta+1)^{3/2},\frac{3}{2}\left[\sqrt{\theta^2+\theta}+\log(\sqrt{\theta}+\sqrt{\theta+1})\right]\right), \qquad \theta>0,
\end{eqnarray*}
and
\begin{eqnarray*}
X_{C_1}^+(\theta,v)&=&C_1^{-3/2}\left( (\theta+1)^{3/2}\cos v,(\theta+1)^{3/2} \sin v,\right.\\
&&\qquad\quad\left.\frac{3}{2}\left[\sqrt{\theta^2+\theta}+\log(\sqrt{\theta}+\sqrt{\theta+1})\right]\right), \qquad \theta>0, v\in\mathbb{R}.
\end{eqnarray*}

To compute the metric on this surface we first need the coefficients of the first fundamental form
\begin{equation*}
E_{C_1}^+(\theta,v)=\frac{1}{C_1^3}\frac{9(\theta+1)^2}{4\theta}, \qquad F_{C_1}^+(\theta,v)=0, \qquad G_{C_1}^+(\theta,v)=\frac{1}{C_1^3}(\theta+1)^3.
\end{equation*}
Thus, the Riemannian metric is
$$
g_{C_1}^+(\theta,v)=\frac{1}{C_1^3}\left(\frac{9(\theta+1)^2}{4\theta}d\theta^2+(\theta+1)^3 dv^2\right).
$$
If we consider the change of coordinates $\left(\theta,v\right)=\left(\left(\sinh u\right)^2,3v\right)$, where $u\neq 0$, we obtain
$$
g_{C_1}^+(u,v)=\frac{9}{C_1^3}\left(\cosh u\right)^6 \left(du^2+dv^2\right).
$$
Since $C_1$ is an arbitrary positive constant, we can consider $C_1=\left(\frac{9}{C}\right)^{1/3}$, where $C$ is the positive constant corresponding to $g_C$, and therefore $g_{C_1}^+=g_C$.

Now, we can find a biconservative immersion from the half plane $u>0$ with the metric $g_C$ in $\mathbb{R}^3$ . The profile curve can now be written as
{\small{
\begin{eqnarray*}
\sigma_{C}^+(u)&=&\left(\sigma_{\left(\frac{9}{C}\right)^{1/3}}^1\left(u\right),\sigma_{\left(\frac{9}{C}\right)^{1/3}}^2\left(u\right)\right)\\
&=& \frac{C^{1/2}}{3}\left(\left(\cosh u\right)^3,\frac{3}{2}\left(\sinh u\cosh u+\log\left(\sinh u+\cosh u\right)\right)\right)\\
&=& \frac{C^{1/2}}{3}\left(\left(\cosh u\right)^3,\frac{3}{2}\left(\frac{1}{2}\sinh 2u+u\right)\right), \qquad u>0.
\end{eqnarray*}
}}
Therefore, the biconservative immersion from the half plane $u>0$ with the metric $g_C$ in $\mathbb{R}^3$ is given by
$$
X_{C}^+(u,v)=\frac{C^{1/2}}{3}\left(\left(\cosh u\right)^3 \cos 3v, \left(\cosh u\right)^3 \sin 3v,\frac{3}{2}\left(\frac{1}{2}\sinh 2u+u\right)\right),
$$
where $u>0$ and $v\in \mathbb{R}$.

For the other half plane, i.e., $u<0$, using the symmetry with respect to $O\rho$, we define the profile curve
{\small{
\begin{eqnarray*}
\sigma_{C}^-(u)&=&\left(\sigma_{\left(\frac{9}{C}\right)^{1/3}}^1\left(-u\right),-\sigma_{\left(\frac{9}{C}\right)^{1/3}}^2\left(-u\right)\right)\\
&=& \frac{C^{1/2}}{3}\left(\left(\cosh u\right)^3,\frac{3}{2}\left(\frac{1}{2}\sinh 2u+u\right)\right),\qquad u<0.
\end{eqnarray*}
}}

Now, it is easy to see that we have a biconservative immersion, in fact a biconservative embedding from the whole $\left(\mathbb{R}^2,g_{C}\right)$ in $\mathbb{R}^3$, given by
$$
X_{C}(u,v)=\frac{C^{1/2}}{3}\left(\left(\cosh u\right)^3 \cos 3v, \left(\cosh u\right)^3 \sin 3v,\frac{3}{2}\left(\frac{1}{2}\sinh 2u+u\right)\right).
$$
\end{proof}

\begin{remark}
For $C=1$ the plot of the profile curve of $X_1$ is

\begin{center}
\begin{tikzpicture}[xscale=1,yscale=1]

\node[inner sep=0pt]  at (3,0){\includegraphics[width=.4\textwidth]{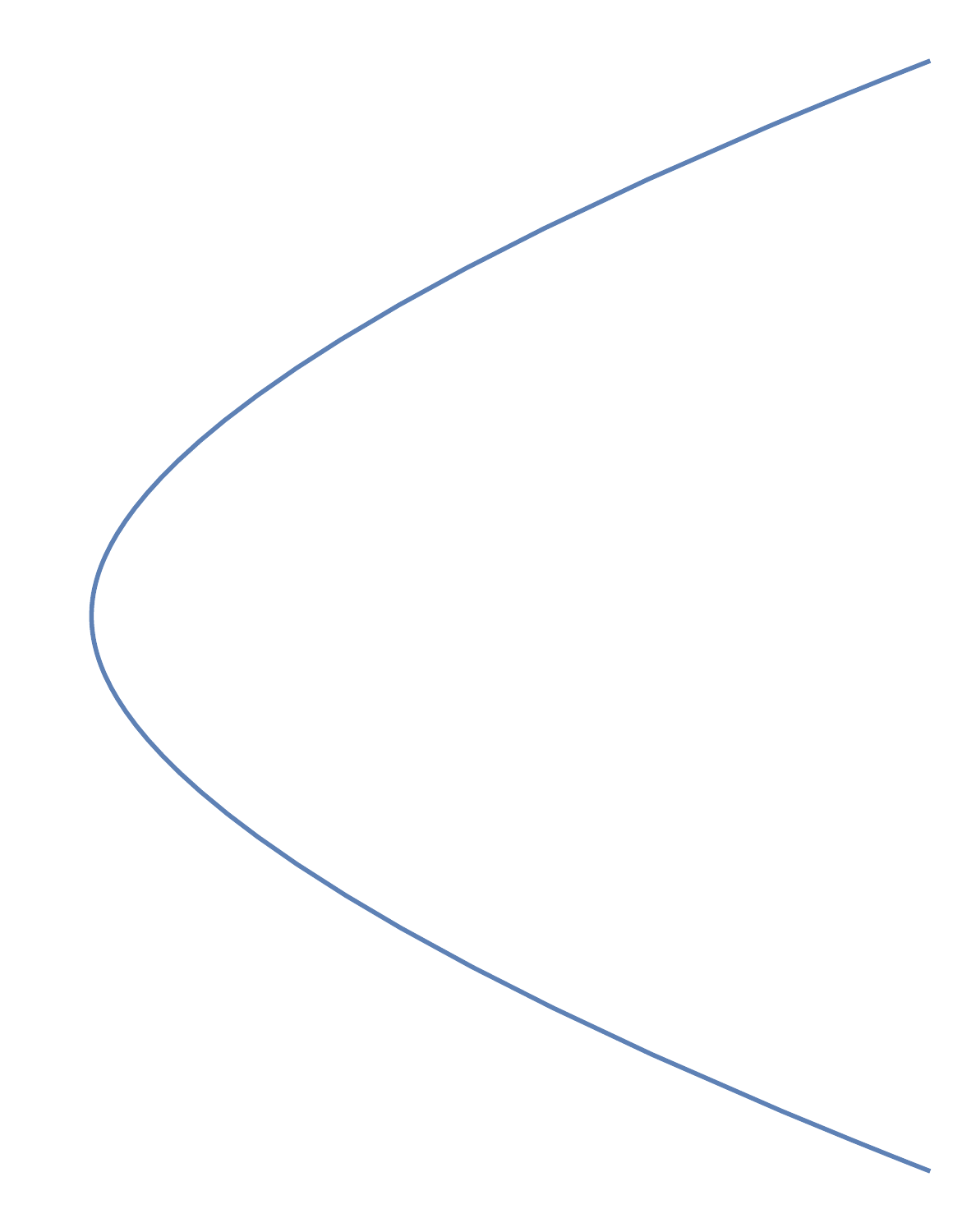}};
\draw [->,>=triangle 45] (-.5,0) -- (6,0);
\draw [->,>=triangle 45] (0,-4) -- (0,4);
\draw (6,0) node[below] {$x$};
\draw (0,4) node[left] {$z$};
\end{tikzpicture}

Figure 1. Plot of the profile curve $\left(\sigma_1^1(u),\sigma_1^2(u)\right)$.
\end{center}

\end{remark}

\subsection{Biconservative surfaces in $\mathbb{S}^3$}
$\newline$
Finding the explicit expressions of complete biconservative surfaces in $\mathbb{S}^3$ is more complicated and we will need some intermediate results.

\begin{proposition}
\label{prop:graf_F}
Let $\left(M^2,g\right)$ be a Riemannian surface with $g=e^{2\varphi(u)}(du^2+dv^2)$, where $u=u(\varphi)$ satisfies
$$
u=u(\varphi)=\pm \int_{\varphi_0}^\varphi \frac{d\tau}{\sqrt{\frac{b}{3}e^{-2\tau/3}-e^{2\tau}+a}}+c, \qquad \varphi\in I,
$$
where $a,b,c\in \mathbb{R}$, $a>0$, $b<0$, and $\frac{b}{3}e^{-2\varphi/3}-e^{2\varphi}+a>0$ for every $\varphi \in I$, with $I$ some open interval. Then $\left(M^2,g\right)$ is isometric to
$$
\left(D_C, g_C=\frac{3}{\xi^2(-\xi^{8/3}+3C\xi^2-3)}d\xi^2+\frac{1}{\xi^2}d\theta^2\right),
$$
where  $D_C=\left(\xi_{01},\xi_{02}\right)\times\mathbb{R}$, $C\in \left(\frac{4}{3^{3/2}},\infty\right)$ is a positive constant,  and $\xi_{01}$ and $\xi_{02}$ are the positive vanishing points of $-\xi^{8/3}+3C\xi^2-3$, with $0<\xi_{01}<\xi_{02}$.
\end{proposition}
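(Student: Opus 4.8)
The plan is to switch from the coordinate $u$ to the coordinate $\varphi$, perform an explicit rescaling that turns $e^{-\varphi}$ into the radial variable $\xi$, and then read off both the constant $C$ and the domain $D_C$ from the positivity condition on the radicand.

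First I would use the defining relation $u=u(\varphi)$ to pass to $\varphi$ as a coordinate. Since the radicand $\frac{b}{3}e^{-2\tau/3}-e^{2\tau}+a$ is strictly positive on $I$, the function $u(\varphi)$ is strictly monotone, hence invertible, and differentiating the integral formula gives $\left(\frac{du}{d\varphi}\right)^2=\left(\frac{b}{3}e^{-2\varphi/3}-e^{2\varphi}+a\right)^{-1}$ (the two signs $\pm$ are irrelevant here, as only the square enters). Substituting $du^2=\frac{d\varphi^2}{\frac{b}{3}e^{-2\varphi/3}-e^{2\varphi}+a}$ into $g=e^{2\varphi}(du^2+dv^2)$ rewrites $g$ entirely in the coordinates $(\varphi,v)$.

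Next I would introduce $\xi=\mu e^{-\varphi}$ and $\theta=\mu v$ with $\mu=(-b)^{3/8}>0$ (legitimate since $b<0$), and set $C=a\mu^{-2}=a(-b)^{-3/4}>0$. The heart of the argument is the algebraic identity
$$
\frac{b}{3}e^{-2\varphi/3}-e^{2\varphi}+a=\frac{\mu^2}{3\xi^2}\left(-\xi^{8/3}+3C\xi^2-3\right),
$$
which follows by writing $e^{-2\varphi/3}=\mu^{-2/3}\xi^{2/3}$ and $e^{2\varphi}=\mu^2\xi^{-2}$ and using the relations $b=-\mu^{8/3}$ and $a=C\mu^2$. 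Combining this with $d\varphi^2=\xi^{-2}d\xi^2$ and $e^{2\varphi}=\mu^2\xi^{-2}$ turns the $du^2$ term into $\frac{3}{\xi^2(-\xi^{8/3}+3C\xi^2-3)}d\xi^2$ and the $dv^2$ term into $\frac{1}{\xi^2}d\theta^2$, so the pushforward of $g$ is exactly $g_C$. This step is routine once the identity is in place; the only care needed is bookkeeping the fractional powers and the scaling $\mu$, which is precisely what pins down $\mu=(-b)^{3/8}$ (from matching the $\xi^{8/3}$ coefficient) and then $C=a(-b)^{-3/4}$ (from the remaining coefficients).

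Finally I would identify the domain. By the displayed identity, positivity of the radicand is equivalent to $P(\xi):=-\xi^{8/3}+3C\xi^2-3>0$ on $\xi>0$. A short analysis of $P$ finishes the proof: $P(0^+)=-3$, $P(\xi)\to-\infty$ as $\xi\to\infty$, and $P$ has a unique critical point $\xi^\ast=(9C/4)^{3/2}$ on $(0,\infty)$, a maximum, with $P(\xi^\ast)=\frac13(9C/4)^4-3$. Hence $P$ has two positive zeros $\xi_{01}<\xi_{02}$ with $P>0$ on $(\xi_{01},\xi_{02})$ exactly when $P(\xi^\ast)>0$, i.e. when $C>4/3^{3/2}$. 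Since $I\neq\emptyset$, the radicand (and therefore $P$) is positive somewhere, forcing $C\in(4/3^{3/2},\infty)$; taking $I$ to be the maximal interval of definition, the strictly decreasing map $\varphi\mapsto\xi=\mu e^{-\varphi}$ carries $I$ onto $(\xi_{01},\xi_{02})$ while $v\mapsto\theta=\mu v$ carries $\mathbb{R}$ onto $\mathbb{R}$, so $(M^2,g)$ is isometric to $(D_C,g_C)$. I expect the main obstacle to be organizing the change of variables so the fractional powers collapse into the clean polynomial $-\xi^{8/3}+3C\xi^2-3$ and confirming the threshold $C>4/3^{3/2}$; the isometry itself is then immediate.
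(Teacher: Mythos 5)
Your proposal is correct and follows essentially the same route as the paper: you differentiate the integral relation to rewrite $g$ in the coordinates $(\varphi,v)$, then perform the substitution $\xi=(-b)^{3/8}e^{-\varphi}$, $\theta=(-b)^{3/8}v$ with $C=a(-b)^{-3/4}$ (the paper splits this into two successive coordinate changes, but it is the identical transformation), and your analysis of $P(\xi)=-\xi^{8/3}+3C\xi^2-3$ — unique maximum at $\xi^\ast=\left(\frac{9}{4}C\right)^{3/2}$ forcing the threshold $C>\frac{4}{3^{3/2}}$ — matches the paper's determination of $\xi_{01},\xi_{02}$ and the range of $C$. If anything, your write-up is slightly more careful than the paper's in making the scaling identity explicit and in noting that $I$ must be taken maximal for the map to be onto $D_C$.
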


\begin{proof}
Since
$$
u=u(\varphi)=\pm \int_{\varphi_0}^\varphi \frac{d\tau}{\sqrt{\frac{b}{3}e^{-2\tau/3}-e^{2\tau}+a}}+c,
$$
we have that
$$
du=\pm \frac{1}{\sqrt{\frac{b}{3}e^{-e^{-2\varphi /3}}-e^{2\varphi}+a}}d\varphi,
$$
and the expression of metric $g(u,v)=e^{2\varphi(u)}(du^2+dv^2)$ can be rewritten as
$$
g(\varphi,v)=\frac{e^{2\varphi}}{\frac{b}{3}e^{-e^{-2\varphi /3}}-e^{2\varphi}+a}d\varphi^2+e^{2\varphi}dv^2.
$$
Consider the change of coordinates $(\varphi,v)=\left(\log\left(\frac{(-b)^{3/8}}{\xi}\right),v\right)$ and we get that
$$
g(\xi,v)=\frac{1}{\xi^2}\left( \frac{3}{-\xi^{8/3}+3a(-b)^{-3/4}\xi^2-3}d\xi^2+(-b)^{3/4}dv^2\right).
$$
Now, considering another change of coordinates $(\xi,v)=\left(\xi,(-b)^{-3/8}\theta\right)$ and denoting $C=a(-b)^{-3/4}>0$, we obtain
$$
g(\xi,\theta)=\frac{1}{\xi^2}\left( \frac{3}{-\xi^{8/3}+3C\xi^2-3}d\xi^2+d\theta^2\right),
$$
for every $\xi\in J$, where $J$ is an open interval such that $-\xi^{8/3}+3C\xi^2-3>0$, for any positive $\xi\in J$ and $C$ a positive constant.

Next, we will determine the interval $J$. If we denote
$$
T(\xi)=-\xi^{8/3}+3C\xi^2-3, \qquad \xi>0,
$$
by straightforward computation, we get that $T(\xi)>0$ for any $\xi\in \left(\xi_{01},\xi_{02}\right)$, where $T\left(\xi_{01}\right)=T\left(\xi_{02}\right)=0$,
\begin{equation*}
\xi_{01}\in \left(0,\left(\frac{9}{4}C\right)^{3/2}\right) \quad \text{and}\quad
\xi_{02}\in \left(\left(\frac{9}{4}C\right)^{3/2}, \infty\right)
\end{equation*}
are the only positive vanishing points of $T$ and $C\in \left(\frac{4}{3^{3/2}},\infty\right)$.

Therefore, $\left(M^2,g\right)$ is isometric to $\left(D_C, g_C=\frac{3}{\xi^2\left(-\xi^{8/3}+3C\xi^2-3\right)}d\xi^2+\frac{1}{\xi^2}d\theta^2\right)$, $\ $ where $D_C=(\xi_{01},\xi_{02})\times\mathbb{R}$, $C\in \left(\frac{4}{3^{3/2}},\infty\right)$, and $\xi_{01}$ and $\xi_{02}$ are the vanishing points of $-\xi^{8/3}+3C\xi^2-3$, with $0<\xi_{01}<\xi_{02}$.
\end{proof}

The Riemannian surface $\left(D_C,g_C\right)$ has the following properties.

\begin{theorem}
\label{theorem_C=1}
Consider $\left(D_C, g_C\right)$. Then, we have
\begin{itemize}
  \item[(a)]$1-K_C(\xi,\theta)=\frac{1}{9}\xi^{8/3}>0,$  $K'_C(\xi)=-\frac{8}{27}\xi^{5/3}$ and $\grad K\neq 0$ at any point of $D_C$;
  \item[(b)] the immersion $\Phi_C:\left(D_C, g_C\right)\to \mathbb{S}^3$ given by
  $$
  \Phi_C(\xi,\theta)=\left(\sqrt{1-\frac{1}{C\xi^2}}\cos \zeta,\sqrt{1-\frac{1}{C\xi^2}}\sin \zeta,\frac{\cos(\sqrt{C}\theta)}{\sqrt{C}\xi},\frac{\sin(\sqrt{C}\theta)}{\sqrt{C}\xi}\right),
  $$
  is biconservative in $\mathbb{S}^3$, where
  $$
  \zeta(\xi)=\pm\int_{\xi_{00}}^\xi{\frac{\sqrt{C}\tau^{4/3}}{(-1+C\tau^2)\sqrt{-\tau^{8/3}+3C\tau^2-3}}d\tau}+c,
  $$
  and $c$ is a real constant.
\end{itemize}
\end{theorem}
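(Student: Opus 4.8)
For part (a) the plan is to read off the Gaussian curvature directly from the explicit diagonal metric. Writing $g_C=E\,d\xi^2+G\,d\theta^2$ with $E=\frac{3}{\xi^2 T(\xi)}$, $G=\frac{1}{\xi^2}$ and $T(\xi)=-\xi^{8/3}+3C\xi^2-3$, and noting that both coefficients depend on $\xi$ alone, I would apply the classical formula $K_C=-\frac{1}{2\sqrt{EG}}\,\partial_\xi\!\left(\frac{\partial_\xi G}{\sqrt{EG}}\right)$. The computation collapses because $\xi T'(\xi)-2T(\xi)=-\frac{2}{3}\xi^{8/3}+6$, which yields $K_C=1-\frac{1}{9}\xi^{8/3}$; hence $1-K_C=\frac{1}{9}\xi^{8/3}>0$ and $K_C'(\xi)=-\frac{8}{27}\xi^{5/3}$. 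Since $\grad K_C=g^{\xi\xi}K_C'(\xi)\,\partial_\xi$ with $g^{\xi\xi}=\frac{\xi^2T(\xi)}{3}>0$ on $D_C$ and $K_C'(\xi)\neq 0$ for $\xi>0$, it follows that $\grad K_C\neq 0$ at every point of $D_C$. The same conclusion can be reached intrinsically from $K=-e^{-2\varphi}\varphi''$ together with the prime integral $3\varphi''+(\varphi')^2+4e^{2\varphi}=a$ of the proof of Proposition~\ref{prop4.5} and the substitution $\varphi=\log\!\left((-b)^{3/8}/\xi\right)$, $C=a(-b)^{-3/4}$.

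For part (b) I would first confirm that $\Phi_C$ takes values in $\mathbb{S}^3$. Setting $R(\xi)=\sqrt{1-\frac{1}{C\xi^2}}$ and $r(\xi)=\frac{1}{\sqrt{C}\,\xi}$ one has $R^2+r^2=1$, so $|\Phi_C|^2=1$. Next I would verify that $\Phi_C$ is an isometric immersion of $\left(D_C,g_C\right)$ by computing the induced metric. Differentiating gives $|\Phi_{C,\theta}|^2=Cr^2=\frac{1}{\xi^2}=G$ and $\langle \Phi_{C,\xi},\Phi_{C,\theta}\rangle=0$, while $|\Phi_{C,\xi}|^2=(R')^2+R^2(\zeta')^2+(r')^2$. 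This last identity is exactly where the precise choice of the angular function $\zeta$ enters: using $(\zeta')^2=\frac{C\xi^{8/3}}{(C\xi^2-1)^2\,T(\xi)}$ and the algebraic identity $T(\xi)+\xi^{8/3}=3(C\xi^2-1)$, the three terms combine to $\frac{3}{\xi^2 T(\xi)}=E$. Thus the first fundamental form of $\Phi_C$ coincides with $g_C$, and $\Phi_C$ is an isometric immersion into $\mathbb{S}^3$. The domain is consistent: the condition $\xi_{01}>1/\sqrt{C}$ (the $\xi$-analogue of the noted inequality $k_{01}>(16/(9C_1))^{2/3}$) guarantees that $R$ is real on $D_C$, while the square-root singularities of the $\zeta$-integrand at $\xi_{01},\xi_{02}$ are integrable, so $\zeta$ is well defined.

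It remains to show that $\Phi_C$ is biconservative. The cleanest route is to recognize $\Phi_C$ as the local classification parametrization of \cite{CMOP} rewritten in the coordinate $\xi$. Comparing $\Phi_C$ with the expression for $\Phi_{C_1}(k,v)$ obtained in Section~\ref{preliminaries}, the substitution $\frac{1}{\sqrt{C}\,\xi}=\frac{4}{3\sqrt{C_1}}k^{-3/4}$ (equivalently $\xi\propto k^{3/4}$) together with $\sqrt{C}\,\theta=v$ matches the third and fourth components, and then also the first two, since it gives $\frac{1}{C\xi^2}=\frac{16}{9C_1}k^{-3/2}$; under this change of variable the angular function $\zeta$ is precisely the transform of $\mu$. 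Since the parametrization of \cite{CMOP} is biconservative and biconservativity is invariant under reparametrization, $\Phi_C$ is biconservative. Equivalently, one may verify the defining equation $A(\grad f)=-\frac{f}{2}\,\grad f$ by computing the unit normal of $\Phi_C$ in $\mathbb{S}^3$ and its shape operator directly.

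I expect this last step to be the main obstacle. Being an isometric immersion of $\left(D_C,g_C\right)$ with the correct intrinsic curvature does not by itself force the extrinsic biconservative condition --- there may be non-biconservative isometric immersions --- so one must genuinely identify $\Phi_C$ with the known biconservative model (or carry out the shape-operator computation). Within this, checking that the angular integrals $\mu$ and $\zeta$ correspond under $\xi\propto k^{3/4}$ is the most delicate bookkeeping, and the isometric-immersion computation in part (b) hinges on the single clean identity $T(\xi)+\xi^{8/3}=3(C\xi^2-1)$.
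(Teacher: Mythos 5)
Your proposal is correct and takes essentially the same route as the paper: part (a) uses the identical orthogonal-coordinates curvature formula, and part (b) rests, exactly as in the paper's proof, on identifying $\Phi_C$ with the \cite{CMOP} parametrization $\Phi_{C_1}(k,v)$ under the change of variables $k\propto\xi^{4/3}$, $v=\sqrt{C}\,\theta$ (the paper takes $(k,v)=\left(3^{-3/2}\xi^{4/3},\frac{\sqrt{C_1}}{4\cdot 3^{1/8}}\theta\right)$ with $C_1=16\cdot 3^{1/4}C$), so that biconservativity is inherited from the known local model. The only cosmetic difference is that you verify the induced metric directly in the $(\xi,\theta)$ coordinates via the identity $T(\xi)+\xi^{8/3}=3\left(C\xi^2-1\right)$, whereas the paper computes the first fundamental form of $\Phi_{C_1}$ in $(k,v)$ coordinates and then transforms it into $g_C$.
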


\begin{proof}
Consider the Riemannian metric
$$
g_C=\frac{3}{\xi^2(-\xi^{8/3}+3C\xi^2-3)}d\xi^2+\frac{1}{\xi^2}d\theta^2
$$
on $D_C$ with coefficients given by
\begin{equation}
\label{eq:coef_I_form}
E_C=g_{C,11}=\frac{3}{\xi^2(-\xi^{8/3}+3C\xi^2-3)}, \quad F_C=g_{C,12}=0, \quad G_C=g_{C,22}=\frac{1}{\xi^2}.
\end{equation}
Using the formula of the Gaussian curvature
$$
K(\xi,\theta)=-\frac{1}{2\sqrt{EG}}\left(\frac{\partial}{\partial_\xi}\left(\frac{G_\xi}{\sqrt{EG}}\right)+\frac{\partial}{\partial_\theta}\left(\frac{E_\theta}{\sqrt{EG}}\right)\right),
$$
we obtain that $K_C$ is given by
$$
K_C(\xi,\theta)=K_C(\xi)=-\frac{1}{9}\xi^{8/3}+1
$$
and
$$
K'_C(\xi)=-\frac{8}{27}\xi^{5/3}.
$$
Therefore, $K'_C(\xi)<0$ at any $\xi\in \left(\xi_{01},\xi_{02}\right)$. Since
$$
(\grad K_C)(\xi,\theta)=\frac{\xi^2(-\xi^{8/3}+3C\xi^2-3)}{3}K'_C(\xi)\partial_\xi,
$$
we have that $|(\grad K)(\xi,\theta)|\neq 0$ for any $(\xi,\theta)\in D_C$.

To prove $(b)$, let us first recall that, if $M^2$ is a biconservative surface in $\mathbb{S}^3$, with $f>0$ and $\grad f\neq0$ at any point of $M$, then $M$ can be locally parameterized by
\begin{equation*}
\Phi_{C_1}(k,v)=\left(\sqrt{1-\frac{16}{9C_1}k^{-3/2}}\cos \mu, \sqrt{1-\frac{16}{9C_1}k^{-3/2}}\sin \mu, \frac{4\cos v}{3\sqrt{C_1}k^{3/4}},\frac{4\sin v}{3\sqrt{C_1}k^{3/4}}\right),
\end{equation*}
for any $(k,v)\in \left(k_{01},k_{02}\right)\times\mathbb{R}$, where $k_{01}$ and $k_{02}$ are the vanishing points of $-\frac{16}{9}k^2-16k^4+C_1k^{7/2}$, $k_{01}\in \left(0,\left(\frac{3}{64}C_1\right)^{2}\right)$, $k_{02}\in \left(\left(\frac{3}{64}C_1\right)^{2}, \infty\right)$, $C_1>\frac{64}{3^{5/4}}$, and
$$
\mu(k)=\pm \int_{k_0}^k \frac{108\sqrt{\frac{\tau^2} {-16+9C_1\tau^{3/2}}}}{\sqrt{\tau^{1/2}\frac{\left(-16+9C_1\tau^{3/2}\right)\left(9C_1\tau^{3/2}-16\left(1+9\tau^2\right)\right)}{C_1}}}d\tau+c_0,
$$
where $c_0$ is a real constant.

In order to compute the metric on this surface, we need the coefficients of the first fundamental form
\begin{equation*}
\begin{array}{cc}
E_{C_1}(k,v)=\frac{81C_1k^{3/2}-144}{k^2\left(9C_1k^{3/2}-16\right)\left(9C_1k^{3/2}-16\left(1+9k^2\right)\right)},\\\\
F_{C_1}(k,v)=0, \qquad  G_{C_1}(k,v)=\frac{16}{9C_1k^{3/2}}.
\end{array}
\end{equation*}
Thus, the Riemannian metric is given by
$$
g_{C_1}(k,v)=\frac{81C_1k^{3/2}-144}{k^2\left(9C_1k^{3/2}-16\right)\left(9C_1k^{3/2}-16\left(1+9k^2\right)\right)}dk^2+\frac{16}{9C_1k^{3/2}} dv^2.
$$

We write $C_1$ as $C_1=16\cdot 3^{1/4}C$, where $C\in\mathbb{R}^\ast_+$, and  we know that $C_1>\frac{64}{3^{5/4}}$, which implies $C>\frac{4}{3^{3/2}}$. Therefore, we can choose $C$ to be the positive constant for the metric $\left(D_C,g_C\right)$.

We note that we can consider the change of coordinates
$$
(k,v)=\left(3^{-3/2}\xi^{4/3},\frac{\sqrt{C_1}}{4\cdot 3^{1/8}}\theta\right),
$$
where $\xi$ and $\theta$ are the coordinates on the domain $D_C$. We have indeed
$$
-\xi^{8/3}+3C\xi^2-3=\frac{27}{16k^2}\left(-\frac{16}{9}k^2-16k^4+C_1k^{7/2}\right)
$$
and, therefore, the vanishing points $\xi_{01}$ and $\xi_{02}$ of $-\xi^{8/3}+3C\xi^2-3$ are the corresponding points to $k_{01}$ and $k_{02}$, i.e., $\xi_{01}=3^{9/8}k_{01}^{3/4}$ and $\xi_{02}=3^{9/8}k_{02}^{3/4}$.

Thus, we get the expression of the initial metric
$$
g_C(\xi,\theta)=\frac{3}{\xi^2\left(-\xi^{8/3}+3C\xi^2-3\right)}d\xi^2+\frac{1}{\xi^2}d\theta^2, \qquad (\xi,\theta)\in D_C.
$$
The local parametrization of the surface can be rewritten as
$$
\Phi_{C}(\xi,\theta)=\left(\sqrt{1-\frac{1}{C\xi^2}}\cos \zeta, \sqrt{1-\frac{1}{C\xi^2}}\sin \zeta, \frac{\cos(\sqrt{C}\theta)}{\sqrt{C}\xi},\frac{\sin(\sqrt{C}\theta)}{\sqrt{C}\xi} \right),
$$
for any $\xi\in \left(\xi_{01},\xi_{02}\right)$ and $\theta\in\mathbb{R}$, where $\zeta=\mu(k(\xi))$ is given by
$$
\zeta(\xi)=\pm\int_{\xi_{00}}^\xi{\frac{\sqrt{C}\tau^{4/3}}{\left(-1+C\tau^2\right)\sqrt{-\tau^{8/3}+3C\tau^2-3}}d\tau}+c,
$$
where $c\in \mathbb{R}$.
\end{proof}

\begin{remark}
The Gaussian curvature of $\left(D_C,g_C\right)$ does not depend on $C$.
\end{remark}

\begin{remark}
Since $\left(\grad K_C\right)(\xi,\theta)=-\frac{8\xi^{11/3}\left(-\xi^{8/3}+3C\xi^2-3\right)}{81}\partial_\xi$ for any $(\xi,\theta)\in D_C$, we get that
$$
\lim_{\xi\searrow\xi_{01}}\left(\grad K_C\right)(\xi,\theta)=\lim_{\xi\nearrow\xi_{02}}\left(\grad K_C\right)(\xi,\theta)=0.
$$
\end{remark}

Next, we denote 
\begin{equation*}
\zeta_0(\xi)=\int_{\xi_{00}}^\xi{\frac{\sqrt{C}\tau^{4/3}}{(-1+C\tau^2)\sqrt{-\tau^{8/3}+3C\tau^2-3}}d\tau}
\end{equation*}
and we state the the following lemma that we will use later in our paper. Its proof follows using standard arguments.

\begin{lemma}
\label{lemma_sign}
We have
$$
\quad \lim_{\xi\searrow \xi_{01}} \zeta_0(\xi)=\zeta_{0,-1}>-\infty \quad \textnormal{ and } \quad \lim_{\xi\nearrow \xi_{02}}\zeta_0(\xi)=\zeta_{0,1} < \infty.
$$
\end{lemma}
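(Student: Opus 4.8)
The plan is to treat $\zeta_0$ as an improper integral and to show that the only obstruction to convergence at each endpoint is an integrable algebraic singularity of order $1/2$. Throughout write $T(\tau)=-\tau^{8/3}+3C\tau^2-3$, so that the integrand is
\[
h(\tau)=\frac{\sqrt{C}\,\tau^{4/3}}{(-1+C\tau^2)\sqrt{T(\tau)}}.
\]
On the open interval $(\xi_{01},\xi_{02})$ the function $h$ is continuous and strictly positive, so $\zeta_0$ is increasing; hence $\lim_{\xi\searrow\xi_{01}}\zeta_0$ exists in $[-\infty,\zeta_0(\xi_{00}))$ and $\lim_{\xi\nearrow\xi_{02}}\zeta_0$ exists in $(\zeta_0(\xi_{00}),+\infty]$, and the content of the lemma is exactly that both are finite.

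First I would dispose of the factors of $h$ other than $\sqrt{T}$. The numerator $\sqrt{C}\,\tau^{4/3}$ is smooth and positive on the closed interval $[\xi_{01},\xi_{02}]$ because $\xi_{01}>0$. For the factor $-1+C\tau^2$ I would recall, from the change of coordinates relating $\xi$ and $k$ in Proposition~\ref{prop:graf_F} and Theorem~\ref{theorem_C=1}, that $\xi_{01}>1/\sqrt{C}$ (equivalently $k_{01}>(16/(9C_1))^{2/3}$). Hence $-1+C\tau^2\ge -1+C\xi_{01}^2>0$ on all of $[\xi_{01},\xi_{02}]$, so this factor is bounded and bounded away from zero. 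Consequently $h$ can blow up only where $T$ vanishes, namely at the two endpoints.

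The key step is that the zeros of $T$ are simple. Differentiating gives $T'(\tau)=-\tfrac{8}{3}\tau^{5/3}+6C\tau$, which for $\tau>0$ vanishes only at $\tau^\ast=(9C/4)^{3/2}$; thus $T$ has a unique critical point on $(0,\infty)$, lying strictly between $\xi_{01}$ and $\xi_{02}$ by Proposition~\ref{prop:graf_F}. A double root at an endpoint would force $T'$ to vanish there as well, producing a second critical point; this is impossible, so $T'(\xi_{01})\ne0$ and $T'(\xi_{02})\ne0$. Expanding to first order, $T(\tau)=T'(\xi_{0i})(\tau-\xi_{0i})+O\big((\tau-\xi_{0i})^2\big)$, whence $\sqrt{T(\tau)}\sim\sqrt{|T'(\xi_{0i})|}\,|\tau-\xi_{0i}|^{1/2}$ as $\tau\to\xi_{0i}$. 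Since the remaining factors of $h$ converge to finite nonzero limits at each endpoint, we get $h(\tau)\sim\kappa_i\,|\tau-\xi_{0i}|^{-1/2}$ for a positive constant $\kappa_i$. Because $\int|\tau-\xi_{0i}|^{-1/2}\,d\tau$ converges near $\xi_{0i}$ (the exponent $1/2$ is less than $1$), a comparison argument yields $\lim_{\xi\searrow\xi_{01}}\zeta_0(\xi)=\zeta_{0,-1}>-\infty$ and $\lim_{\xi\nearrow\xi_{02}}\zeta_0(\xi)=\zeta_{0,1}<\infty$. The only genuine obstacle is the simplicity of the roots of $T$: had either been a double root, $\sqrt{T}$ would vanish to first order and $h$ would carry a non-integrable $|\tau-\xi_{0i}|^{-1}$ singularity; the uniqueness of the critical point of $T$ is precisely what rules this out.
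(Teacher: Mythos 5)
Your proof is correct, and since the paper itself omits the argument (stating only that the lemma ``follows using standard arguments''), what you have written is precisely the intended standard argument: the factor $-1+C\tau^2$ is bounded away from zero on $[\xi_{01},\xi_{02}]$, the zeros of $T(\tau)=-\tau^{8/3}+3C\tau^2-3$ at the endpoints are simple because the unique positive critical point $\tau^\ast=\left(\frac{9}{4}C\right)^{3/2}$ lies strictly between them, and the resulting $|\tau-\xi_{0i}|^{-1/2}$ singularity is integrable. As a minor simplification, the positivity of $-1+C\tau^2$ follows directly from $T(\xi_{01})=0$, which gives $3C\xi_{01}^2=\xi_{01}^{8/3}+3>3$ and hence $C\tau^2\geq C\xi_{01}^2>1$ on the whole interval, without invoking the change of coordinates from $k_{01}$.
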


The following result shows that we do have a one-parameter family of Riemannian surfaces $\left(D_C,g_C\right)$.

\begin{proposition}
\label{isometry}
Let us consider $\left(D_C,g_C=\frac{3}{\xi^2\left(-\xi^{8/3}+3C\xi^2-3\right)}d\xi^2+\frac{1}{\xi^2}d\theta^2\right)$ and $\left(D_{\tilde{C}},g_{\tilde{C}}=\frac{3}{\tilde{\xi}^2\left(-\tilde{\xi}^{8/3}+3\tilde{C}\tilde{\xi}^2-3\right)}d\tilde{\xi}^2+\frac{1}{\tilde{\xi}^2}d\tilde{\theta}^2\right)$.
The surfaces $(D_C,g_C)$ and $\left(D_{\tilde{C}},g_{\tilde{C}}\right)$ are isometric if and only if $C=\tilde{C}$ and the isometry is $\Theta(\xi,\theta)=\left(\xi,\pm\theta+a_1\right)$, where $a_1$ is a real constant.
\end{proposition}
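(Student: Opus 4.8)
The plan is to prove the two implications of the equivalence separately, with the "if" direction being essentially trivial and the "only if" direction requiring the real work. For the easy direction, if $C=\tilde{C}$ then the map $\Theta(\xi,\theta)=(\xi,\pm\theta+a_1)$ is manifestly an isometry: it fixes the $\xi$-coordinate, the metric coefficient $g_{11}=3/(\xi^2(-\xi^{8/3}+3C\xi^2-3))$ is unchanged because $\xi$ is preserved, and the translation/reflection $\theta\mapsto\pm\theta+a_1$ leaves $d\theta^2$ invariant (since $d(\pm\theta+a_1)=\pm d\theta$ and $(\pm d\theta)^2=d\theta^2$). The domains match because $\xi_{01},\xi_{02}$ depend only on $C=\tilde{C}$.

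For the harder "only if" direction, the natural strategy is to use the Gaussian curvature as an isometry invariant. By Theorem~\ref{theorem_C=1}(a), the Gaussian curvature is $K_C(\xi)=1-\frac{1}{9}\xi^{8/3}$, which I would emphasize does \emph{not} depend on $C$; moreover $K_C$ is a strictly monotone (decreasing) function of $\xi$ on $(\xi_{01},\xi_{02})$, since $K_C'(\xi)=-\frac{8}{27}\xi^{5/3}<0$. Suppose $\Psi:(D_C,g_C)\to(D_{\tilde C},g_{\tilde C})$ is an isometry, writing $\Psi(\xi,\theta)=(\tilde\xi(\xi,\theta),\tilde\theta(\xi,\theta))$. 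Any isometry preserves Gaussian curvature, so $K_C(\xi)=K_{\tilde C}(\tilde\xi)$, i.e. $1-\frac19\xi^{8/3}=1-\frac19\tilde\xi^{8/3}$, forcing $\tilde\xi=\xi$ pointwise. In particular $\tilde\xi$ is independent of $\theta$ and equals $\xi$, which first shows the image interval must coincide, hence $\xi_{01}=\tilde\xi_{01}$ and $\xi_{02}=\tilde\xi_{02}$; since these endpoints are the roots of $-\xi^{8/3}+3C\xi^2-3$ and are determined by $C$, I would argue this already gives $C=\tilde C$ (e.g. $\xi_{01}\xi_{02}$ or the value $3C=(\xi^{8/3}+3)/\xi^2$ evaluated at a common point pins down $C$).

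Once $\tilde\xi=\xi$ and $C=\tilde C$ are established, it remains to determine the $\theta$-component. I would exploit that the level sets $\{\xi=\text{const}\}$ are the integral curves along which $K$ is constant, and that the gradient direction $\partial_\xi$ is an intrinsically distinguished direction (the direction of $\grad K$); since $\Psi$ preserves $\grad K$ and $\tilde\xi=\xi$, the differential $d\Psi$ must send $\partial_\theta$ to $\pm\partial_{\tilde\theta}$ and $\partial_\xi$ to $\partial_{\tilde\xi}$ up to the metric-preserving constraint. Comparing the pulled-back metric $\Psi^*g_{\tilde C}$ with $g_C$: matching the $d\xi^2$ coefficients (both equal to $3/(\xi^2(-\xi^{8/3}+3C\xi^2-3))$ once $C=\tilde C$ and $\tilde\xi=\xi$) forces $\tilde\theta_\xi=0$, so $\tilde\theta=\tilde\theta(\theta)$ alone, and matching the $d\theta^2$ coefficient $1/\xi^2=(1/\tilde\xi^2)(\tilde\theta_\theta)^2$ gives $(\tilde\theta_\theta)^2=1$, whence $\tilde\theta_\theta=\pm1$ and $\tilde\theta=\pm\theta+a_1$ for a constant $a_1$.

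The main obstacle is the rigorous passage from "$K$ is preserved and monotone in $\xi$" to "$\tilde\xi=\xi$ as a function on $D_C$," because a priori $\Psi$ mixes the two coordinates; the clean way is to observe that $\xi$ is, up to a fixed diffeomorphism, a function of the invariant $K$ alone (by strict monotonicity of $K_C$), so $\tilde\xi=(K_{\tilde C})^{-1}\circ K_C\circ\xi$, and since $K_C=K_{\tilde C}$ as functions this composite is the identity $\tilde\xi=\xi$. A secondary technical point is confirming that the endpoints $\xi_{01},\xi_{02}$ are strictly increasing functions of $C$ (guaranteed by the analysis in Proposition~\ref{prop:graf_F}), so that equality of intervals indeed yields $C=\tilde C$; this is where I would invoke the explicit location of the roots of $-\xi^{8/3}+3C\xi^2-3$ relative to $(\frac94 C)^{3/2}$.
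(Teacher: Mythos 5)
Your proposal is correct and takes essentially the same route as the paper: curvature invariance (with $K=1-\frac{1}{9}\xi^{8/3}$ independent of $C$) forces $\tilde{\xi}=\xi$, and matching the components of the pulled-back metric gives $\tilde{\theta}_\xi=0$ and $\tilde{\theta}_\theta=\pm 1$, hence $\Theta(\xi,\theta)=\left(\xi,\pm\theta+a_1\right)$. The only cosmetic difference is that you deduce $C=\tilde{C}$ from the coincidence of the interval endpoints — a common positive root of $-\xi^{8/3}+3C\xi^2-3$ and $-\xi^{8/3}+3\tilde{C}\xi^2-3$ yields $3\left(C-\tilde{C}\right)\xi_{01}^2=0$ directly, so your worry about monotonicity of the roots in $C$ is superfluous — whereas the paper reads $C=\tilde{C}$ off the $d\xi^2$-coefficient identity once $\partial\Theta^2/\partial\xi=0$ is established.
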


\begin{proof}
Assume that there exists an isometry $\Theta:(D_C,g_C)\to \left(D_{\tilde{C}},g_{\tilde{C}}\right)$ and denote $\Theta(\xi,\theta)=\left(\Theta^1(\xi,\theta),\Theta^2(\xi,\theta)\right)$. As we have seen in Theorem $\ref{theorem_C=1}$, the Gaussian curvature of $(D_C,g_C)$ is $K(\xi,\theta)=-\frac{1}{9}\xi^{8/3}+1$ and the Gaussian curvature of $\left(D_{\tilde{C}},g_{\tilde{C}}\right)$ is $\tilde{K}(\tilde{\xi},\tilde{\theta})=-\frac{1}{9}\tilde{\xi}^{8/3}+1$.

Since $\Theta$ is an isometry, we have that $\tilde{K}(\Theta(\xi,\theta))=K(\xi,\theta)$ and, taking into account the above expressions of the curvatures, we get $\Theta^1(\xi,\theta)=\xi>0$. Therefore, $\Theta(\xi,\theta)=\left(\xi,\Theta^2(\xi,\theta)\right)$.

Next, from $\left(\Theta^\ast g_{\tilde{C}}\right)\left(\partial_{\xi},\partial_{\xi}\right)=g_C\left(\partial_{\xi},\partial_{\xi}\right)$, i.e., $g_{\tilde{C}}\left(\Theta_\ast \partial_{\xi}, \Theta_\ast \partial_{\xi}\right)=g_C\left(\partial_{\xi},\partial_{\xi}\right)$, using $(\ref{eq:coef_I_form})$, we find
\begin{equation}
\label{eq:A_1}
\frac{3}{-\xi^{8/3}+3C\xi^2-3}=\frac{3}{-\xi^{8/3}+3\tilde{C}\xi^2-3}+\left(\frac{\partial \Theta^2}{\partial \xi}\right)^2.
\end{equation}
Similarly, from $\left(\Theta^\ast g_{\tilde{C}}\right)\left(\partial_{\xi},\partial_{\theta}\right)=g_C\left(\partial_{\xi},\partial_{\theta}\right)$ and $\left(\Theta^\ast g_{\tilde{C}}\right)\left(\partial_{\theta},\partial_{\theta}\right)=g_C\left(\partial_{\theta},\partial_{\theta}\right)$, using $(\ref{eq:coef_I_form})$, we get
\begin{equation}
\label{eq:A_2}
0=\frac{\partial \Theta^2}{\partial \xi}\cdot\frac{\partial \Theta^2}{\partial \theta} \quad \text{ and } \quad \frac{\partial \Theta^2}{\partial \theta}=\pm 1.
\end{equation}
From $(\ref{eq:A_2})$ one obtains $\frac{\partial \Theta^2}{\partial \xi}=0$. Now, using $(\ref{eq:A_1})$, it follows that $C=\tilde{C}$. Since $\frac{\partial \Theta^2}{\partial \xi}=0$ and $\frac{\partial \Theta^2}{\partial \theta}=\pm 1$, we have $\Theta(\xi,\theta)=(\xi,\pm\theta+a_1)$, where $a_1$ is a real constant.
\end{proof}

The Riemannian surface $\left(D_C,g_C\right)$ is not complete. In order to find a complete biconservative surface in $\mathbb{S}^3$, we will first construct a complete surface of revolution in $\mathbb{R}^3$. We begin with the following result.

\begin{theorem}
\label{theorem3.18}
Let us consider $\left(D_C,g_C\right)$, where $D_C=\left(\xi_{01},\xi_{02}\right)\times\mathbb{R}$ and $C\in \left(\frac{4}{3^{3/2}},\infty\right)$. Then $\left(D_C,g_C\right)$ is the universal cover of the surface of revolution in $\mathbb{R}^3$ given by
\begin{equation}
\label{psi(xi,theta)}
\Psi_{C,C^\ast}(\xi,\theta)=\left(f(\xi)\cos \frac{\theta}{C^\ast}, f(\xi)\sin \frac{\theta}{C^\ast},h(\xi)\right),
\end{equation}
where $f(\xi)=\frac{C^\ast}{\xi}$,
\begin{equation}
\label{h(xi)}
h(\xi)=\pm\int_{\xi_{00}}^\xi{\sqrt{\frac{3\tau^2-\left(C^\ast\right)^2\left(-\tau^{8/3}+3C\tau^2-3\right)}{\tau^4\left(-\tau^{8/3}+3C\tau^2-3\right)}}d\tau}+a,
\end{equation}
$C^\ast\in \left(0,\left(C-\frac{4}{3^{3/2}}\right)^{-1/2}\right)$ is a positive constant, $a$ is a real constant and $\xi_{00}$ is an arbitrary point in $\left(\xi_{01},\xi_{02}\right)$.
\end{theorem}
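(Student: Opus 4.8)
The plan is to reverse-engineer the profile curve from the metric. A surface of revolution $\Psi(\xi,\theta)=\left(f(\xi)\cos(\theta/C^\ast),f(\xi)\sin(\theta/C^\ast),h(\xi)\right)$ carries the induced first fundamental form
$$
\left(f'(\xi)^2+h'(\xi)^2\right)d\xi^2+\frac{f(\xi)^2}{\left(C^\ast\right)^2}\,d\theta^2,
$$
so I would match this against $g_C$ coefficient by coefficient. The $d\theta^2$-term forces $f(\xi)^2/\left(C^\ast\right)^2=1/\xi^2$, that is $f(\xi)=C^\ast/\xi$ (the positive root being legitimate since $\xi>0$ on $D_C$); substituting $f'(\xi)=-C^\ast/\xi^2$ into the $d\xi^2$-term then yields
$$
h'(\xi)^2=\frac{3}{\xi^2\left(-\xi^{8/3}+3C\xi^2-3\right)}-\frac{\left(C^\ast\right)^2}{\xi^4}=\frac{3\xi^2-\left(C^\ast\right)^2\left(-\xi^{8/3}+3C\xi^2-3\right)}{\xi^4\left(-\xi^{8/3}+3C\xi^2-3\right)},
$$
and integrating recovers exactly the formula (\ref{h(xi)}) for $h$.

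The substantive point is to decide for which $C^\ast$ the radicand stays positive throughout $\left(\xi_{01},\xi_{02}\right)$. There the denominator $\xi^4 T(\xi)$ is positive, with $T(\xi)=-\xi^{8/3}+3C\xi^2-3>0$, so the requirement is $\left(C^\ast\right)^2<3\xi^2/T(\xi)$ for every $\xi$ in the interval, i.e. $\left(C^\ast\right)^2<\min_{(\xi_{01},\xi_{02})}3\xi^2/T(\xi)$. I would reduce this to a one-variable optimisation: since $3\xi^2/T(\xi)\to+\infty$ at both endpoints (where $T\to0^+$), the minimum is interior, and minimising $3\xi^2/T$ is the same as maximising $T(\xi)/(3\xi^2)=-\tfrac13\xi^{2/3}+C-\xi^{-2}$. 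Differentiation gives a unique positive critical point at $\xi^{8/3}=9$, i.e. $\xi=3^{3/4}$, which is a maximum; there the value is $C-4/3^{3/2}$, and one checks $T(3^{3/4})=9\sqrt{3}\,C-12>0$ (using $C>4/3^{3/2}$), so $3^{3/4}\in\left(\xi_{01},\xi_{02}\right)$. Hence $\min 3\xi^2/T=\left(C-4/3^{3/2}\right)^{-1}$ and the validity condition is exactly $0<C^\ast<\left(C-4/3^{3/2}\right)^{-1/2}$, the stated range; on this open range the radicand is strictly positive, so $h'$ never vanishes and $h$ is strictly monotone.

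Finally, for the covering statement I would observe that $\Psi_{C,C^\ast}$ depends on $\theta$ only through the rotation angle $\theta/C^\ast$, which is $2\pi C^\ast$-periodic; thus while $\theta$ ranges over all of $\mathbb{R}$, the image is the surface of revolution that is topologically a cylinder $\mathbb{S}^1\times\left(\xi_{01},\xi_{02}\right)$, and $\Psi_{C,C^\ast}$ wraps the $\theta$-line around the rotation circle. Since $f'\neq0$ the profile curve is regular, so $\Psi_{C,C^\ast}$ is an immersion, and the coefficient matching shows it is a local isometry from $\left(D_C,g_C\right)$ onto this surface with its induced metric; as $D_C=\left(\xi_{01},\xi_{02}\right)\times\mathbb{R}$ is simply connected, $\left(D_C,g_C\right)$ is its universal cover.

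The main obstacle is the optimisation step: correctly identifying that $\min_{(\xi_{01},\xi_{02})}3\xi^2/T(\xi)$ is attained at $\xi=3^{3/4}$ with value $\left(C-4/3^{3/2}\right)^{-1}$, and verifying that this critical point genuinely lies inside $\left(\xi_{01},\xi_{02}\right)$, since it is precisely this value that pins down the admissible range of $C^\ast$. The rest is routine coefficient-matching and a standard covering-space argument.
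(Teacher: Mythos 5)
Your proof is correct, and its computational core (the formula for $h'(\xi)^2$ and the determination of the admissible range of $C^\ast$) coincides with the paper's, but the logical direction is genuinely different. The paper does not verify an ansatz: it assumes an arbitrary isometry $\Theta$ from $\left(D_C,g_C\right)$ to an arbitrary surface of revolution $\tilde{\Psi}\left(\tilde{\xi},\tilde{\theta}\right)=\left(\tilde{f}\cos\tilde{\theta},\tilde{f}\sin\tilde{\theta},\tilde{h}\right)$ and \emph{derives} the canonical form: the curvature invariance $\tilde{K}\circ\Theta=K$ forces $\Theta^1=\Theta^1(\xi)$, the pullback conditions force $\tilde{\theta}=\tilde{\theta}(\theta)$, and a separation-of-variables argument in $\left(\partial\tilde{\theta}/\partial\theta\right)^2=1/\left(\xi^2\tilde{f}^2\right)$ produces the constant $C^\ast$ intrinsically, with $\tilde{f}\circ\tilde{\xi}=C^\ast/\xi$ and $\tilde{\theta}=\theta/C^\ast+a_0$. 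So the paper proves the stronger statement that \emph{any} realization of $\left(D_C,g_C\right)$ as a surface of revolution must be of the form $(\ref{psi(xi,theta)})$, whereas you posit that form and check it by coefficient matching — which is more elementary and fully suffices for the theorem as stated, but loses the necessity claim. In exchange, your write-up is sharper on two points the paper glosses over: you carry out explicitly the optimisation that the paper dismisses as ``standard arguments,'' locating the interior minimum of $3\xi^2/T(\xi)$ at $\xi=3^{3/4}$ (with $T\left(3^{3/4}\right)=9\sqrt{3}\,C-12>0$ guaranteeing $3^{3/4}\in\left(\xi_{01},\xi_{02}\right)$, and minimum value $\left(C-4/3^{3/2}\right)^{-1}$, pinning down exactly $0<C^\ast<\left(C-4/3^{3/2}\right)^{-1/2}$); and you make the universal-cover assertion explicit via the $2\pi C^\ast$-periodicity in $\theta$, injectivity of the profile curve (from $f'\neq 0$) and simple connectedness of $D_C$, a point the paper's proof leaves entirely implicit in the definition $\Psi_{C,C^\ast}=\tilde{\Psi}\circ\Theta$.
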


\begin{proof}
In fact, we can prove that if $\left(D_C,g_C\right)$ is (locally and intrinsically) isometric to a surface of revolution, then it has to be of the form $(\ref{psi(xi,theta)})$. To show this, let us consider
$$
\tilde{\Psi}\left(\tilde{\xi},\tilde{\theta}\right)=\left(\tilde{f}\left(\tilde{\xi}\right)\cos \tilde{\theta}, \tilde{f}\left(\tilde{\xi}\right)\sin \tilde{\theta},\tilde{h}\left(\tilde{\xi}\right)\right), \qquad \left(\tilde{\xi},\tilde{\theta}\right)\in \tilde{D},
$$
a surface of revolution, where $\tilde{D}$ is an open set in $\mathbb{R}^2$ and $\Theta:\left(D_C,g_C\right)\to \left(\tilde{D},\tilde{g}\right)$ an isometry, where
$$
\tilde{g}\left(\tilde{\xi},\tilde{\theta}\right)=\left(\left(\tilde{f}'\left(\tilde{\xi}\right)\right)^2+\left(\tilde{h}'\left(\tilde{\xi}\right)\right)^2\right) d\tilde{\xi}^2+\left(\tilde{f}\left(\tilde{\xi}\right)\right)^2d\tilde{\theta}^2.
$$
We will assume that $\tilde{f}\left(\tilde{\xi}\right)>0$ for any $\tilde{\xi}$.

Next, we will proceed in the same way as in the proof of Proposition $\ref{isometry}$. From $\tilde{K}(\Theta(\xi,\theta))=K(\xi,\theta)$, we get $\Theta^1(\xi,\theta)=\Theta^1(\xi)$. In order to simplify the notations, we write $\Theta^1=\tilde{\xi}$ and $\Theta^2=\tilde{\theta}$, so that $\tilde{\xi}(\xi,\theta)=\tilde{\xi}(\xi)$. As $\Theta^\ast\tilde{g}=g_C$, we get
\begin{equation}
\label{ec1}
\left(\frac{\partial \tilde{\theta}}{\partial\theta}\right)^2\left(f\left(\tilde{\xi}(\xi)\right)\right)^2=\frac{1}{\xi^2}
\end{equation}
and
\begin{equation}
\label{ec2}
\frac{\partial \tilde{\theta}}{\partial\theta} \frac{\partial \tilde{\theta}}{\partial\xi}\left(f\left(\tilde{\xi}(\xi)\right)\right)^2=0.
\end{equation}
From $(\ref{ec1})$, we get that $\frac{\partial \tilde{\theta}}{\partial\theta}\neq0$, and then, from $(\ref{ec2})$, it follows that $\frac{\partial \tilde{\theta}}{\partial\xi}=0$. Thus we have $\tilde{\theta}(\xi,\theta)=\tilde{\theta}(\theta)$.
Again from $(\ref{ec1})$, one obtains $\left(\frac{\partial \tilde{\theta}}{\partial\theta}\right)^2=\frac{1}{\xi^2\left(f\left(\tilde{\xi}(\xi)\right)\right)^2}$. Since the left hand term depends only on $\theta$ and the right hand term depends only on $\xi$, it follows that
\begin{equation}
\label{tildef}
\tilde{f}\left(\tilde{\xi}(\xi)\right)=\frac{C^\ast}{\xi},
\end{equation}
where $C^\ast\in \mathbb{R}^\ast_+$, and
\begin{equation*}
\tilde{\theta}(\theta)=\frac{\theta}{C^\ast}+a_0,
\end{equation*}
where $a_0\in \mathbb{R}$. In the following, we shall consider $a_0=0$.

Hence, we obtain
\begin{equation*}
\left(\left(\tilde{f}\circ \tilde{\xi}\right)'(\xi)\right)^2+ \left(\left(\tilde{h}\circ \tilde{\xi}\right)'(\xi)\right)^2=\frac{3}{\xi^2\left(-\xi^{8/3}+3C\xi^2-3\right)}
\end{equation*}
and, from $(\ref{tildef})$, one has
\begin{equation}
\label{ec3}
\left(\left(\tilde{h}\circ \tilde{\xi}\right)'(\xi)\right)^2=\frac{3\xi^2-\left(C^\ast\right)^2\left(-\xi^{8/3}+3C\xi^2-3\right)}{\xi^2\left(-\xi^{8/3}+3C\xi^2-3\right)}.
\end{equation}

Next, we have to find the conditions to be satisfied by the positive constant $C^\ast$, such that $3\xi^2-\left(C^\ast\right)^2\left(-\xi^{8/3}+3C\xi^2-3\right)>0$ for any $\xi\in \left(\xi_{01},\xi_{02}\right)$, where $C>\frac{4}{3^{3/2}}$ is fixed.
By standard arguments, it can be shown that if $C^\ast\in \left(0,\left(C-\frac{4}{3^{3/2}}\right)^{-1/2}\right)$, then the above inequality holds and
$$
\left(\tilde{h}\circ\tilde{\xi}\right)(\xi)=\pm\int_{\xi_{00}}^\xi {\sqrt{\frac{3\tau^2-\left(C^\ast\right)^2\left(-\tau^{8/3}+3C\tau^2-3\right)}{\tau^4\left(-\tau^{8/3}+3C\tau^2-3\right)}}d\tau}+a,
$$
for any $\xi\in \left(\xi_{01},\xi_{02}\right)$, where $a$ is a real constant.

Next, we consider $\Psi_{C,C^\ast}=\tilde{\Psi}\circ\Theta$ defined by
\begin{eqnarray*}
\Psi_{C,C^\ast}(\xi,\theta)&=&\left(\left(\tilde{f}\circ\tilde{\xi}\right)(\xi)\cos\left(\tilde{\theta}(\theta)\right),\left(\tilde{f}\circ\tilde{\xi}\right)(\xi)\sin\left(\tilde{\theta}(\theta)\right), \left(\tilde{h}\circ \tilde{\xi}\right)(\xi)\right) \\\\
&=&\left(f(\xi)\cos\frac{\theta}{C^\ast},f(\xi)\sin\frac{\theta}{C^\ast}, h'(\xi)\right), \qquad (\xi,\theta)\in D_C,
\end{eqnarray*}
where $C>\frac{4}{3^{3/2}}$ is a positive constant, $C^\ast\in \left(0,\sqrt{\frac{3^{3/2}}{3^{3/2}C-4}}\right)$, $f(\xi)=\frac{C^\ast}{\xi}$ and
$$
h(\xi)=\pm\int_{\xi_{00}}^\xi {\sqrt{\frac{3\tau^2-\left(C^\ast\right)^2\left(-\tau^{8/3}+3C\tau^2-3\right)}{\tau^4\left(-\tau^{8/3}+3C\tau^2-3\right)}}d\tau}+a,
$$
for any $\xi\in \left(\xi_{01},\xi_{02}\right)$, with $a$ a real constant.
\end{proof}

\begin{remark}
The mean curvature function of $\Psi_{C,C^\ast}$ is given by
$$
f_{C,C^\ast}=\frac{9\xi^2-\left(C^\ast\right)^2\left(-2\xi^{8/3}+9C\xi^2-18\right)}{6C^\ast\sqrt{9\xi^2-3\left(C^\ast\right)^2\left(-\xi^{8/3}+3C\xi^2-3\right)}}
$$
and we can see that it depends on both $C$ and $C^\ast$.
\end{remark}

\begin{remark}
From now on, we will take $\xi_{00}=\left(\frac{9}{4}C\right)^{3/2}\in \left(\xi_{01},\xi_{02}\right)$ and $C^\ast\in \left(0,\left(C-\frac{4}{3^{3/2}}\right)^{-1/2}\right)$.
\end{remark}

The function $h$ has the following properties which follows easily.

\begin{lemma}
Let
$$
h_0(\xi)=\int_{\xi_{00}}^\xi {\sqrt{\frac{3\tau^2-\left(C^\ast\right)^2\left(-\tau^{8/3}+3C\tau^2-3\right)}{\tau^4\left(-\tau^{8/3}+3C\tau^2-3\right)}}d\tau}, \qquad \xi\in \left(\xi_{01},\xi_{02}\right),
$$
i.e., we fix the sign in $(\ref{h(xi)})$ and we choose $a=a_0=0$. Then

\begin{itemize}
  \item [(a)] $\lim_{\xi\searrow \xi_{01}} h_0(\xi)=h_{0,-1}>-\infty$ \textnormal{and} $\lim_{\xi\nearrow \xi_{02}}h_0(\xi)=h_{0,1} < \infty$;

  \item [(b)] $h_0$ is strictly increasing and
  $$
  \lim_{\xi\searrow\xi_{01}}h_0'(\xi)=\lim_{\xi\nearrow\xi_{02}}h_0'(\xi)=\infty;
  $$
  \item [(c)] $\lim_{\xi\searrow \xi_{01}} h_0''(\xi)=-\infty$ \textnormal{and} $\lim_{\xi\nearrow \xi_{02}}h_0''(\xi) = \infty$.
\end{itemize}

\end{lemma}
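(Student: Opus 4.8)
The plan is to reduce everything to the local behaviour of the integrand near the two endpoints, where the only source of singularity is the vanishing of $T(\xi):=-\xi^{8/3}+3C\xi^2-3$. Write the integrand as $h_0'(\xi)=\sqrt{Q(\xi)}$ with $Q(\xi)=N(\xi)/(\xi^4 T(\xi))$ and $N(\xi):=3\xi^2-(C^\ast)^2T(\xi)$. First I would record that $\xi_{01}$ and $\xi_{02}$ are \emph{simple} zeros of $T$: since $T$ is smooth on $(0,\infty)$ and $T'(\xi)=\xi\bigl(-\tfrac{8}{3}\xi^{2/3}+6C\bigr)$ vanishes (for $\xi>0$) only at $\left(\tfrac{9}{4}C\right)^{3/2}$, and $\xi_{01}<\left(\tfrac{9}{4}C\right)^{3/2}<\xi_{02}$, we get $T'(\xi_{01})>0$ and $T'(\xi_{02})<0$. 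Hence we may factor the simple zeros exactly, $T(\xi)=(\xi-\xi_{01})\,T_1(\xi)$ near $\xi_{01}$ and $T(\xi)=(\xi_{02}-\xi)\,T_2(\xi)$ near $\xi_{02}$, with $T_1,T_2$ continuous and positive at the respective endpoints. Since $N(\xi_{0i})=3\xi_{0i}^2>0$ (because $T(\xi_{0i})=0$), the numerator of $Q$ stays bounded away from zero near each endpoint, and the factor $\xi^4$ is harmless on $(\xi_{01},\xi_{02})\subset(0,\infty)$.

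Part (b) is then immediate for monotonicity: on $(\xi_{01},\xi_{02})$ both $T$ and $N$ are positive (positivity of $N$ is exactly the condition on $C^\ast$ used in the proof of Theorem~\ref{theorem3.18}), so $h_0'>0$ and $h_0$ is strictly increasing. For the blow-up, the factorization gives $Q(\xi)\sim \dfrac{3}{\xi_{01}^2 T'(\xi_{01})}\,(\xi-\xi_{01})^{-1}$ as $\xi\searrow\xi_{01}$, and an analogous one-sided estimate at $\xi_{02}$, whence $h_0'(\xi)\to\infty$ at both ends. For part (a), the same expansion shows $h_0'(\xi)=\kappa_1(\xi)\,(\xi-\xi_{01})^{-1/2}$ with $\kappa_1$ continuous and positive at $\xi_{01}$, and symmetrically $h_0'(\xi)=\kappa_2(\xi)\,(\xi_{02}-\xi)^{-1/2}$ near $\xi_{02}$. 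Since an integrand of order $(\xi-\xi_{0i})^{-1/2}$ is integrable at the endpoint, a comparison argument shows that both improper integrals $\lim_{\xi\searrow\xi_{01}}h_0(\xi)$ and $\lim_{\xi\nearrow\xi_{02}}h_0(\xi)$ are finite, which is precisely (a).

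For part (c) I would differentiate once more using $h_0''=\dfrac{Q'}{2\sqrt{Q}}$. Near $\xi_{01}$, differentiating $Q(\xi)\sim A_1(\xi-\xi_{01})^{-1}$ (with $A_1=3/(\xi_{01}^2 T'(\xi_{01}))>0$) gives $Q'(\xi)\sim -A_1(\xi-\xi_{01})^{-2}$, so that
$$
h_0''(\xi)=\frac{Q'(\xi)}{2\sqrt{Q(\xi)}}\sim -\frac{1}{2}\sqrt{\frac{3}{\xi_{01}^2\,T'(\xi_{01})}}\,(\xi-\xi_{01})^{-3/2}\longrightarrow-\infty .
$$
At $\xi_{02}$ the same computation is carried out with the local variable $\xi_{02}-\xi$; because increasing $\xi$ now \emph{decreases} this distance (and $T'(\xi_{02})<0$), the sign is reversed and $h_0''(\xi)\to+\infty$ as $\xi\nearrow\xi_{02}$, giving (c).

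The one point that needs care is part (c): knowing only the leading singular term of $h_0'$ is not quite enough, since differentiation could a priori be contaminated by the correction terms coming from $T_1,T_2$ and from $\xi^{8/3}$. The clean way around this is to keep $Q$ and $Q'$ as explicit expressions in $\xi$ and $T(\xi)$ and to factor out the simple zero \emph{exactly}, so that the $(\xi-\xi_{0i})^{-3/2}$ behaviour of $Q'/\sqrt{Q}$ is isolated with a continuous, nonvanishing coefficient at the endpoint; the sign of $T'(\xi_{0i})$ then dictates the sign of the limit. Everything else reduces to the elementary fact that a simple zero of $T$ produces a square-root singularity in $h_0'$, an integrable one at that, and a $(\cdot)^{-3/2}$ singularity in $h_0''$.
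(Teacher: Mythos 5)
Your proof is correct. The paper offers no argument for this lemma at all --- it is stated with the remark that the properties ``follow easily'' --- so there is no proof to compare against; what you have written supplies exactly the standard argument the paper leaves implicit. The structural facts you use all check out: $T'(\xi)=\xi\left(-\tfrac{8}{3}\xi^{2/3}+6C\right)$ has its unique positive zero at $\left(\tfrac{9}{4}C\right)^{3/2}\in\left(\xi_{01},\xi_{02}\right)$ (this is the paper's $\xi_{00}$), so $\xi_{01},\xi_{02}$ are simple zeros of $T$ with $T'(\xi_{01})>0>T'(\xi_{02})$; the numerator $N=3\xi^2-(C^\ast)^2T$ equals $3\xi_{0i}^2>0$ at the endpoints and is positive on the interior precisely by the constraint $C^\ast\in\left(0,\left(C-\tfrac{4}{3^{3/2}}\right)^{-1/2}\right)$ established in the proof of Theorem \ref{theorem3.18}. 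This gives the $(\xi-\xi_{0i})^{-1/2}$ singularity of $h_0'=\sqrt{Q}$, which is integrable (part (a)) and blows up (part (b)), with strict positivity of $Q$ inside giving monotonicity. You also correctly identified and repaired the one genuine pitfall: the leading-order asymptotics of $Q$ cannot simply be differentiated termwise, but differentiating the exact expression $Q=N/(\xi^4T)$ shows the term $-NT'/(\xi^4T^2)$ dominates the remaining $O\!\left(T^{-1}\right)$ terms, and after division by $2\sqrt{Q}$ one gets $h_0''\approx-\sqrt{N}\,T'/\left(2\xi^2T^{3/2}\right)$, a $(\xi-\xi_{0i})^{-3/2}$ blow-up whose sign is $-\operatorname{sgn}T'(\xi_{0i})$, i.e. $-\infty$ at $\xi_{01}$ and $+\infty$ at $\xi_{02}$, which is part (c). Nothing is missing.
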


We have shown that $\left(D_C,g_C\right)$ is isometric to the surface of revolution given by
$\Psi_{C,C^\ast}$ and this surface is not complete. Alternating the sign in $(\ref{h(xi)})$ and with appropriate choices of the constant $a$, we will construct a complete surface, which on an open dense subset is locally isometric to $\left(D_C,g_C\right)$. This surface is a surface of revolution whose profile curve is the graph of a function defined on the whole $Oh$ axis.

First, let us consider the profile curve $\sigma_0(\xi)=\left(f(\xi),h_0(\xi)\right)$, for any $\xi\in\left(\xi_{01},\xi_{02}\right)$. Obviously, $h_0:\left(\xi_{01},\xi_{02}\right)\to \left(h_{0,-1},h_{0,1}\right)$ is a diffeomorphism and we can consider $h_0^{-1}:\left(h_{0,-1},h_{0,1}\right) \to \left(\xi_{01},\xi_{02}\right)$, with $h_0^{-1}$ $:$ $\xi_0=\xi_0(h)$, $h\in \left(h_{0,-1},h_{0,1}\right)$. One can reparametrize $\sigma_0$ such that it becomes the graph of a function depending on the variable $h$,  $h\in\left(h_{0,-1},h_{0,1}\right)$.

In order to extend our surface in the upper part, we ask the line $h=h_{0,1}$ to be a symmetry axis. Therefore $2h_{0,1}=h_0(\xi)+h_1(\xi)$, where $h_1:\left(\xi_{01},\xi_{02}\right)\to\mathbb{R}$, and then we get $h_1(\xi)=2h_{0,1}-h_0(\xi)$; thus $a_1=2h_{0,1}$. It is easy to see that
$$
\lim_{\xi\searrow \xi_{01}} h_1(\xi)=2h_{0,1}-h_{0,-1}, \qquad \lim_{\xi\nearrow \xi_{02}}h_1(\xi) =h_{0,1},
$$
and, since $h_1'(\xi)=-h_0'(\xi)<0$, for any $\xi\in(\xi_{01},\xi_{02})$, it follows that $h_1$ is strictly decreasing and $h_1\left(\xi_{01},\xi_{02}\right)=\left(h_{0,1},2h_{0,1}-h_{0,-1}\right)$.
Since $h_1$ is a diffeomorphism on its image, we can consider $h^{-1}_1:\left(h_{0,1},2h_{0,1}-h_{0,-1}\right) \to \left(\xi_{01},\xi_{02}\right)$, with $h^{-1}_1$ $:$ $\xi_1=\xi_1(h)$, $ h\in \left(h_{0,1},2h_{0,1}-h_{0,-1}\right)$.

It is easy to see that
$$
\lim_{h\nearrow h_{0,1}}\xi_1(h)=\xi_{02}, \quad \lim_{h\searrow {2h_{0,1}-h_{0,-1}}} \xi_1(h)=\xi_{01},
$$
and, since $\left(h_1^{-1}\right)'(h)=\frac{1}{h_1'(\xi_1(h))}<0$, for any $h\in(h_{0,1},2h_{0,1}-h_{0,-1})$, it follows that $h_1^{-1}$ is strictly decreasing.

Next, we define a function $F_1:\left(h_{0,-1},2h_{0,1}-h_{0,-1}\right)\to\mathbb{R}$ by
\begin{equation*}
F_1(h)=
\left\{
\begin{array}{lll}
\xi_1(h)&,& h\in \left(h_{0,1},2h_{0,1}-h_{0,-1}\right)\\
\xi_{02}&,& h=h_{0,1}\\
\xi_0(h)&,& h\in \left(h_{0,-1},h_{0,1}\right)
\end{array}
\right.,
\end{equation*}
and we will prove that $F_1$ is at least of class $C^3$.

Obviously, $F_1$ is continuous.

In order to prove that $F_1$ is of class $C^1$, we first consider $h\in\left(h_{0,-1},h_{0,1}\right)$. In this case, we have
$$
F_1'(h)=\xi_0'(h)=\frac{1}{h_0'(\xi_0(h))}
$$
and
\begin{equation*}
\lim_{h\nearrow h_{0,1}}F_1'(h)=\lim_{h\nearrow h_{0,1}}\xi_0'(h)=\lim_{h\nearrow h_{0,1}}\frac{1}{h_0'(\xi_0(h))}=\lim_{\xi\nearrow \xi_{02}}\frac{1}{h_0'(\xi)}=0.
\end{equation*}
Then, we consider $h\in\left(h_{0,1},2h_{0,1}-h_{0,-1}\right)$, and we get
$$
F_1'(h)=\xi_1'(h)=\frac{1}{h_1'(\xi_1(h))}
$$
and
\begin{equation*}
\lim_{h\searrow h_{0,1}}F_1'(h)=\lim_{h\searrow h_{0,1}}\xi_1'(h)=\lim_{h\searrow h_{0,1}}\frac{1}{h_1'(\xi_1(h))}= \lim_{\xi\nearrow \xi_{02}}\frac{1}{h_1'(\xi)}=\lim_{\xi\nearrow \xi_{02}}\frac{1}{-h_0'(\xi)}=0.
\end{equation*}
Therefore, $\lim_{h\nearrow h_{0,1}}F_1'(h)=\lim_{h\searrow h_{0,1}}F_1'(h)=0\in\mathbb{R}$, which means that there exists $F_1'(h_{0,1})=0$ and $F_1$ is of class $C^1$.

In a similar way, one can prove that $F_1$ is of class $C^2$ and $C^3$.

In order to extend our surface in the lower part, we ask the line $h=h_{0,-1}$ to be a symmetry axis. Therefore, $2h_{0,-1}=h_0(\xi)+h_{-1}(\xi)$, where $h_{-1}:\left(\xi_{01},\xi_{02}\right)\to\mathbb{R}$, and we get $h_{-1}(\xi)=2h_{0,-1}-h_0(\xi)$; thus $a_{-1}=2h_{0,-1}$. It is easy to see that
$$
\lim_{\xi\nearrow \xi_{02}}h_{-1}(\xi) =2h_{0,-1}-h_{0,1}, \qquad \lim_{\xi\searrow \xi_{01}} h_{-1}(\xi)=h_{0,-1},
$$
and, since $h_{-1}'(\xi)=-h_0'(\xi)<0$, for any $\xi\in(\xi_{01},\xi_{02})$, it follows that $h_{-1}$ is strictly decreasing and $h_{-1}\left(\xi_{01},\xi_{02}\right)=\left(2h_{0,-1}-h_{0,1}, h_{0,-1}\right)$.
Since $h_{-1}$ is a diffeomorphism on its image, we can consider $h^{-1}_{-1}:\left(2h_{0,-1}-h_{0,1}, h_{0,-1}\right) \to \left(\xi_{01},\xi_{02}\right)$, with $h^{-1}_{-1}$ $:$ $\xi_{-1}=\xi_{-1}(h)$, $ h\in \left(2h_{0,-1}-h_{0,1}, h_{0,-1}\right)$.

It is easy to see that
$$
\lim_{h\nearrow 2h_{0,-1}-h_{0,1}}\xi_{-1}(h)=\xi_{02}, \qquad \lim_{h\searrow {h_{0,-1}}} \xi_{-1}(h)=\xi_{01},
$$
and, since $\left(h_{-1}^{-1}\right)'(h)=\frac{1}{h_{-1}'(\xi_{-1}(h))}<0$, for any $h\in(2h_{0,-1}-h_{0,1},h_{0,-1})$, we get that $h_{-1}^{-1}$ is strictly decreasing.

Further, we define the function $F_{-1}:\left(2h_{0,-1}-h_{0,1}, h_{0,1}\right)\to\mathbb{R}$ by
\begin{equation*}
F_{-1}(h)=
\left\{
\begin{array}{lll}
\xi_0(h)&,& h\in \left(h_{0,-1},h_{0,1}\right) \\
\xi_{01}&,& h=h_{0,-1}\\
\xi_{-1}(h)&,& h\in \left(2h_{0,-1}-h_{0,1},h_{0,-1}\right)
\end{array}
\right.,
\end{equation*}
and, in a similar way to the proof of $C^3$ smoothness of $F_1$, we can show that also $F_{-1}$ is at least of class $C^3$.

Now, we extend the functions $F_1$ and $F_{-1}$ to the whole line $\mathbb{R}$. This construction will be done by symmetry to the lines $h=h_{0,k}$, $k\in \mathbb{Z}^\ast$.

We define $h_{0,2}=2h_{0,1}-h_{0,-1}$, $h_{0,3}=2h_{0,2}-h_{0,1}=3h_{0,1}-2h_{0,-1}$, etc; then $h_{0,-2}=2h_{0,-1}-h_{0,1}$, $h_{0,-3}=2h_{0,-2}-h_{0,-1}=3h_{0,-1}-2h_{0,1}$, etc. This way we obtain
\begin{equation*}
h_{0,k}=
\left\{
\begin{array}{lll}
k\  h_{0,1}-(k-1)h_{0,-1} &,& k\geq 1 \\
-k\  h_{0,-1}+(k+1)h_{0,1} &,& k\leq - 1
\end{array}
\right. .
\end{equation*}
The functions $h_k$ are obtained in the same way. For example, $h_1(\xi)=2h_{0,1}-h_0(\xi)$, $h_2(\xi)=2h_{0,2}-h_1(\xi)=2h_{0,1}-2h_{0,-1}+h_0(\xi)$, etc; then $h_{-1}(\xi)=2h_{0,-1}-h_0(\xi)$, $h_{-2}(\xi)=2h_{0,-2}-h_{-1}(\xi)=2h_{0,-1}-2h_{0,1}+h_0(\xi)$, etc. In general, we have
\begin{equation*}
h_{k}(\xi)=
\left\{
\begin{array}{lll}
2 h_{0,k}-h_{k-1}(\xi)&,& k\geq 1 \\
2 h_{0,k}-h_{k+1}(\xi) &,& k\leq - 1
\end{array}
\right. .
\end{equation*}
We note that for $h_k$ we have the following formulas
\begin{equation*}
h_{k}(\xi)=
\left\{
\begin{array}{lll}
k\left( h_{0,1}-h_{0,-1}\right)+h_0(\xi)&,& k=2p, \   p\in \mathbb{Z} \\
(k+1)h_{0,1}-(k-1)h_{0,-1}-h_0(\xi) &,& k=2p+1, \  p\in\mathbb{Z}
\end{array}
\right. .
\end{equation*}
Denoting the inverse of the function $h_k$ by $\xi_k$, we define the function
\begin{equation*}
F(h)=
\left\{
\begin{array}{lll}
\xi_{01} &,& h=h_{0,k}, \ k=2p, \ p\geq 1 \\
\xi_{02} &,& h=h_{0,k}, \ k=2p+1, \ p\geq 0\\
\xi_k(h) &,& h\in \left(h_{0,k},h_{0,k+1}\right), \ k\geq 1 \\
\xi_{02} &,& h=h_{0,1}\\
\xi_0(h) &,& h\in \left(h_{0,-1},h_{0,1}\right) \\
\xi_{01} &,& h=h_{0,-1}\\
\xi_{k}(h)&,& h\in \left(h_{0,k-1},h_{0,k}\right), k\leq -1\\
\xi_{01} &,& h=h_{0,k}, \ k=2p-1, \ p\leq 0 \\
\xi_{02} &,& h=h_{0,k}, \ k=2p, \ p\leq -1
\end{array}
\right. ,
\end{equation*}
which is at least of class $C^3$.

\begin{remark}
When $C=C^\ast=1$, $a=0$ and $\xi_{00}=\left(\frac{9}{4}\right)^{3/2}$, the plots of
$$
h_0(\xi)=\int_{\xi_{00}}^\xi {\sqrt{\frac{3\tau^2-\left(-\tau^{8/3}+3C\tau^2-3\right)}{\tau^4\left(-\tau^{8/3}+3C\tau^2-3\right)}}d\tau},
$$
$h_1(\xi)=2h_{02}-h(\xi)$, $h_{-1}(\xi)=2h_{01}-h(\xi)$, and of corresponding profile curves $\sigma_0(\xi)=\left(\frac{1}{\xi},h_0(\xi)\right)$, $\sigma_1(\xi)=\left(\frac{1}{\xi},h_1(\xi)\right)$, and $\sigma_{-1}(\xi)=\left(\frac{1}{\xi},h_{-1}(\xi)\right)$, for $\xi\in\left(\xi_{01},\xi_{02}\right)$, are as follows

\begin{tabular}{c c}
\begin{tikzpicture}[xscale=1,yscale=1]

\node[inner sep=0pt]  at (3,-1.4){\includegraphics[width=.4\textwidth]{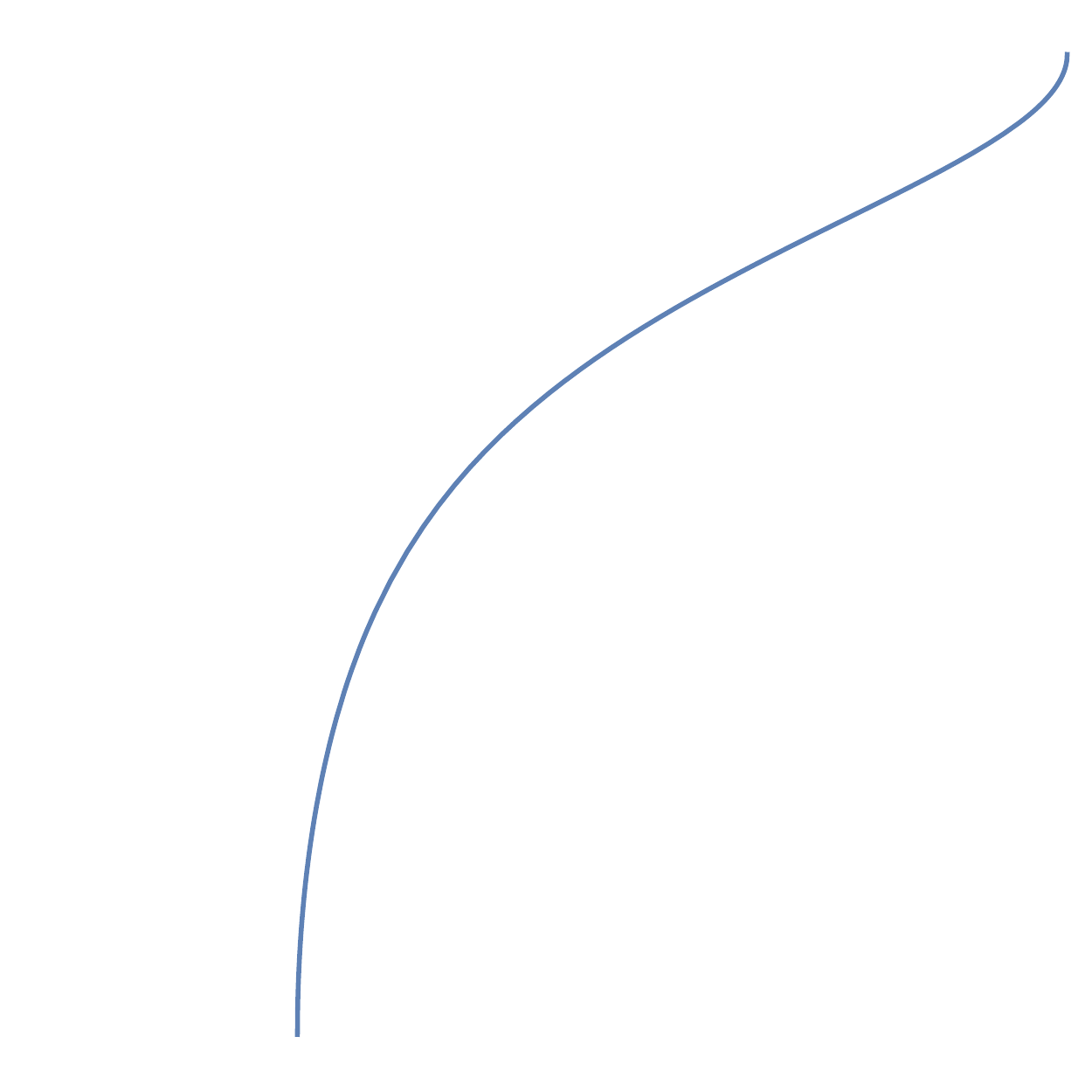}};
\draw [->,>=triangle 45] (0,0) -- (6,0);
\draw [->,>=triangle 45] (0,-4.1) -- (0,3);
\draw (6,0) node[below] {$\xi$};
\draw (0,3) node[left] {$h$};
\end{tikzpicture}
&
\begin{tikzpicture}[xscale=1,yscale=1]

\node[inner sep=0pt]  at (3,-.5){\includegraphics[width=.4\textwidth]{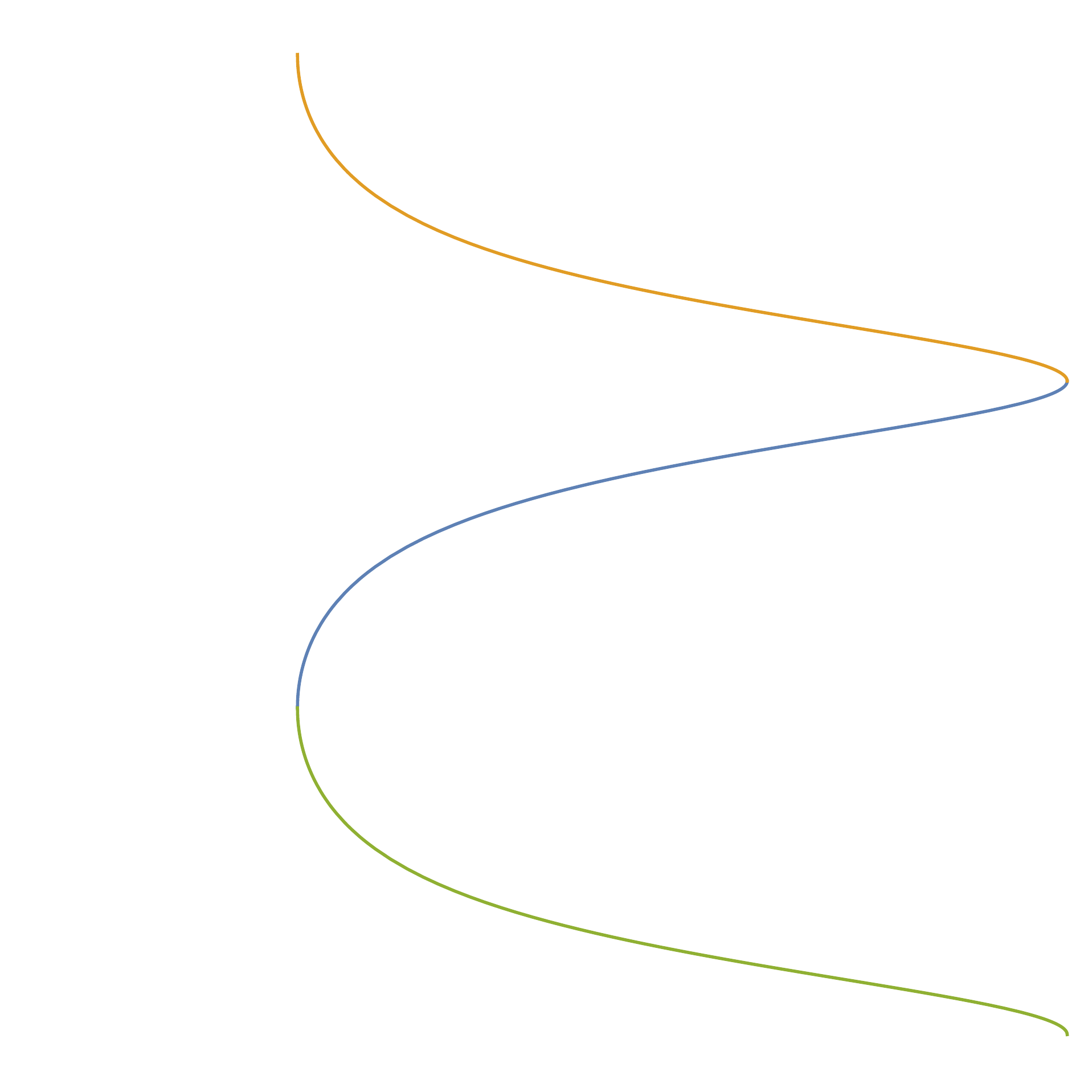}};
\draw [->,>=triangle 45] (0,0) -- (6,0);
\draw [->,>=triangle 45] (0,-4.1) -- (0,3);
\draw (6,0) node[below] {$\xi$};
\draw (0,3) node[left] {$h$};
\end{tikzpicture}
\\
Figure 2. Plot of $h_0$. &
Figure 3. Plot of $h_0$, $h_1$ and $h_{-1}$.
\\
\end{tabular}

\begin{tabular}{c c}
\begin{tikzpicture}[xscale=1,yscale=1]

\node[inner sep=0pt]  at (3,-1.4){\includegraphics[width=.4\textwidth]{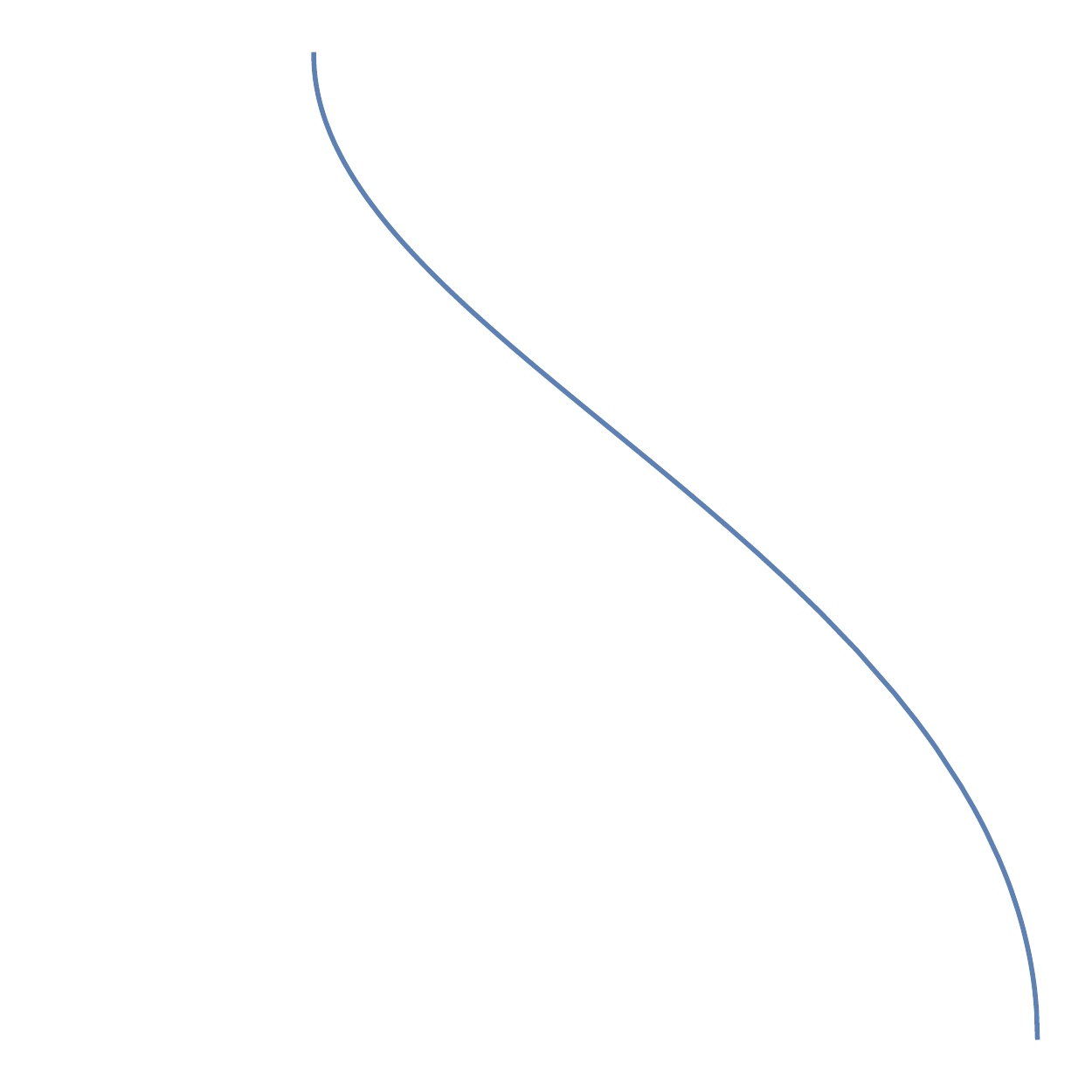}};
\draw [->,>=triangle 45] (0,0) -- (6,0);
\draw [->,>=triangle 45] (0,-4.1) -- (0,3);
\draw (6,0) node[below] {$f$};
\draw (0,3) node[left] {$h$};
\end{tikzpicture}
&
\begin{tikzpicture}[xscale=1,yscale=1]

\node[inner sep=0pt]  at (3,-.5){\includegraphics[width=.4\textwidth]{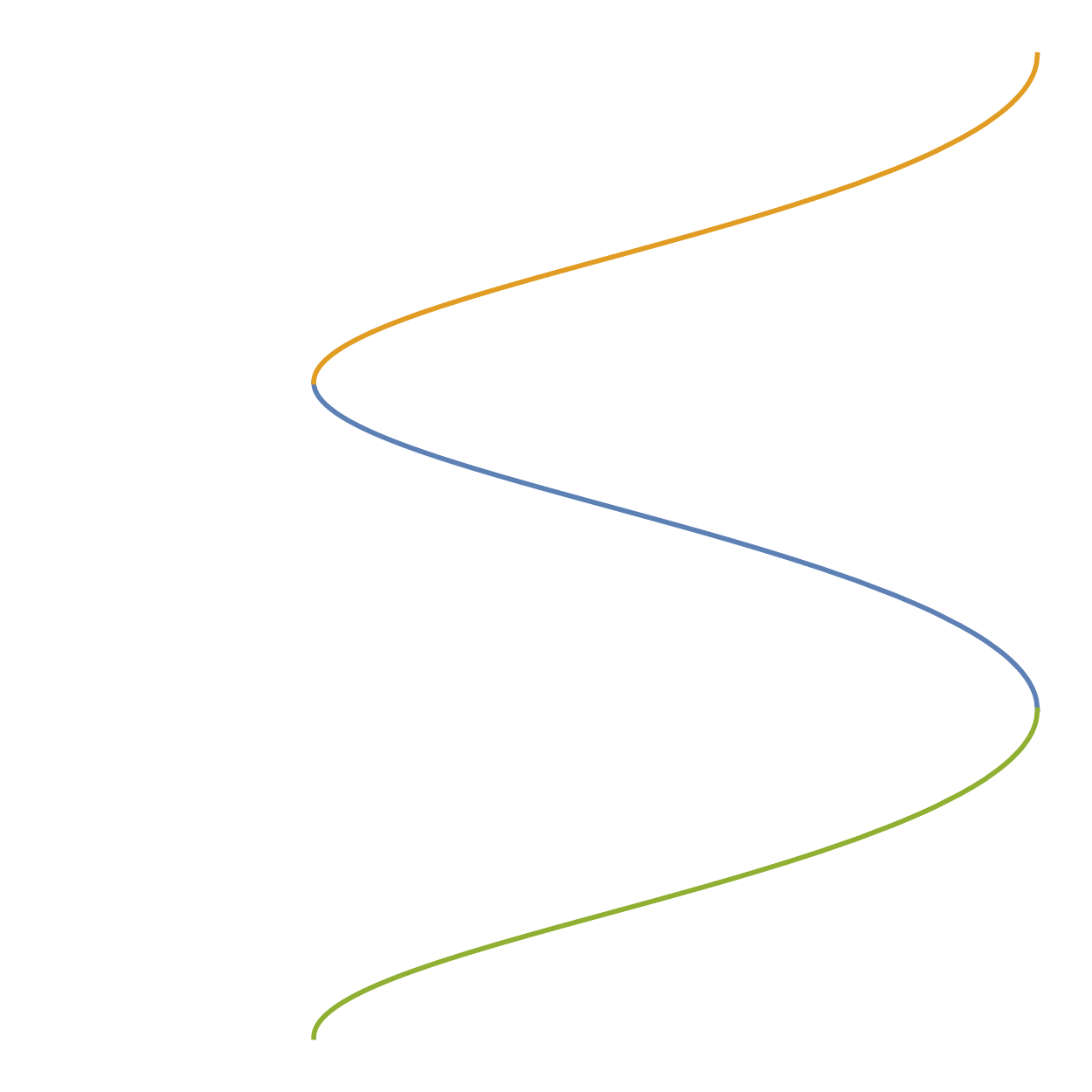}};
\draw [->,>=triangle 45] (0,0) -- (6,0);
\draw [->,>=triangle 45] (0,-4.1) -- (0,3);
\draw (6,0) node[below] {$f$};
\draw (0,3) node[left] {$h$};
\end{tikzpicture}
\\
Figure 4. Plot of $\sigma_0$.&
Figure 5. Plot of $\sigma_0$, $\sigma_1$ and $\sigma_{-1}$.
\\
\end{tabular}

\end{remark}

\begin{remark}
The function $F$ is periodic with main period $2\left(h_{0,1}-h_{0,-1}\right)$.
\end{remark}

\begin{remark}
The function $F$ depends on $C$ and $C^\ast$.
\end{remark}

We define $\sigma_k(\xi)=\left(f(\xi),h_k(\xi)\right)$, $\xi\in \left(\xi_{01},\xi_{02}\right)$, where $k\in \mathbb{Z}$. From Theorem $\ref{theorem3.18}$, we know that $\left(D_C, g_C\right)$ is isometric to the surface of revolution given by
$$
\Psi_{C,C^\ast}(\xi,\theta)=\left(f(\xi)\cos \frac{\theta}{C^\ast}, f(\xi) \sin \frac{\theta}{C^\ast}, h_k(\xi)\right), \qquad (\xi,\theta)\in D_C.
$$
We can reparametrize $\sigma_k$ and one obtains
{\small{
\begin{equation*}
\sigma_k(h)=\left\{
\begin{array}{llcl}
\sigma\left(\xi_k(h)\right)=\left((f\circ \xi_k)(h), h\right)=\left((f\circ F)(h),h\right), &h\in\left(h_{0,k},h_{0,k+1}\right)&,&  k\geq 1 \\
\sigma\left(\xi_0(h)\right)=\left((f\circ \xi_0)(h), h\right)=\left((f\circ F)(h),h\right), &h\in\left(h_{0,-1},h_{0,1}\right)&,& k=0 \\
\sigma\left(\xi_{k}(h)\right)=\left((f\circ \xi_{k})(h), h\right)=\left((f\circ F)(h),h\right),& h\in\left(h_{0,k-1},h_{0,k}\right)&,& k\leq -1
\end{array}
\right..
\end{equation*}
}}
Now, let us consider the profile curve
$$
\sigma(h)=\left((f\circ F)(h),h\right), \qquad h\in \mathbb{R}.
$$
Of course, $\sigma$ is the graph of $f\circ F$, it is at least of class $C^3$ and periodic. We can state the following theorem.

\begin{theorem}
\label{main_th3}
The surface of revolution given by
\begin{equation*}
\Psi_{C,C^\ast}(h,\theta)=\left((f\circ F)(h)\cos\frac{\theta}{C^\ast}, (f\circ F)(h)\sin \frac{\theta}{C^\ast},h \right), \qquad (h,\theta)\in\mathbb{R}^2,
\end{equation*}
is complete and, on an open dense subset, it is locally isometric to $\left(D_C, g_C\right)$. The induced metric is given by
$$
g_{C, C^\ast}(h,\theta)=\frac{3F^2(h)}{3F^2(h)-\left(C^\ast\right)^2(-F^{8/3}(h)+3CF^2(h)-3)}dh^2+\frac{1}{F^2(h)}d\theta^2,
$$
$(h,\theta)\in\mathbb{R}^2$. Moreover, $\grad K\neq 0$ at any point of that open dense subset, and $1-K>0$ everywhere.
\end{theorem}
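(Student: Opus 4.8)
The plan is to read off every assertion from the data already assembled: the $C^3$, periodic function $F$, which on each interval $(h_{0,k},h_{0,k+1})$ coincides with the inverse diffeomorphism $\xi_k$ of $h_k$ and which takes the boundary values $\xi_{01},\xi_{02}$ (with $F'=0$) at the points $h_{0,k}$; the profile data $f(\xi)=C^\ast/\xi$; and the isometry of Theorem~\ref{theorem3.18}.

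First I would compute the induced first fundamental form of $\Psi_{C,C^\ast}$. Setting $\rho(h)=(f\circ F)(h)=C^\ast/F(h)$, one has $\Psi_h=(\rho'\cos\frac{\theta}{C^\ast},\rho'\sin\frac{\theta}{C^\ast},1)$ and $\Psi_\theta=\frac{\rho}{C^\ast}(-\sin\frac{\theta}{C^\ast},\cos\frac{\theta}{C^\ast},0)$, so the mixed coefficient $g_{12}$ vanishes, $G=\rho^2/(C^\ast)^2=1/F^2$, and $E=(\rho')^2+1$. On each interval $(h_{0,k},h_{0,k+1})$ we have $F=\xi_k$ with inverse $h=h_k(\xi)$, whence $F'(h)=1/h_k'(\xi)$, and $(h_k')^2=(h_0')^2$ equals the expression under the square root in $(\ref{h(xi)})$. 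Writing $P(F)=-F^{8/3}+3CF^2-3$ and substituting gives $(\rho')^2=(C^\ast)^2 P/(3F^2-(C^\ast)^2 P)$, hence $E=3F^2/(3F^2-(C^\ast)^2 P)$, exactly the announced coefficient. At a gluing point $h=h_{0,k}$ we have $F\in\{\xi_{01},\xi_{02}\}$, so $P=0$ and $F'=0$, and both the direct computation and the formula give $E=1$; since $F$ is $C^3$ the expression for $g_{C,C^\ast}$ extends across these points and holds on all of $\mathbb{R}^2$.

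Next I would establish the local isometry and completeness. On each strip $(h_{0,k},h_{0,k+1})\times\mathbb{R}$ the map $(h,\theta)\mapsto(F(h),\theta)$ is a diffeomorphism onto $D_C$ pulling $g_C$ back to $g_{C,C^\ast}$; equivalently, each such strip is the image of the fundamental piece of Theorem~\ref{theorem3.18} under a reflection of $\mathbb{R}^3$ in a horizontal plane $\{z=h_{0,k}\}$, which is an ambient isometry. The union of these strips is the complement of the discrete family of lines $\{h=h_{0,k}\}$, hence open and dense, and there the surface is locally isometric to $(D_C,g_C)$. For completeness I would compare with the flat metric $g_0=dh^2+\xi_{02}^{-2}d\theta^2$ on $\mathbb{R}^2$, which is complete: since $P\ge 0$ and $3F^2-(C^\ast)^2 P>0$ on $[\xi_{01},\xi_{02}]$ we have $E\ge 1$, while $G=1/F^2\ge 1/\xi_{02}^2$ because $F\le\xi_{02}$; thus $g_{C,C^\ast}-g_0$ is non-negative definite and Proposition~\ref{prop1} yields that $g_{C,C^\ast}$ is complete.

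Finally, the curvature statement follows from the local isometry together with continuity. By Theorem~\ref{theorem_C=1}(a) the Gaussian curvature of $(D_C,g_C)$ is $K_C(\xi)=1-\frac19\xi^{8/3}$, so on each strip $K(h)=1-\frac19 F(h)^{8/3}$, and since $F>0$ is continuous on all of $\mathbb{R}$ the identity $1-K=\frac19 F^{8/3}>0$ persists across the gluing lines and holds everywhere. As $K=K(h)$, we have $\grad K=E^{-1}(dK/dh)\partial_h$ with $dK/dh=-\frac{8}{27}F^{5/3}F'$, which vanishes exactly where $F'=0$, that is, precisely on the gluing lines; hence $\grad K\ne 0$ throughout the open dense subset found above. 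The step requiring the most care is the behaviour at the extremal parallels $F=\xi_{01},\xi_{02}$: there $F'=0$ and $P=0$ make the intermediate expressions $(F')^2$ and the denominators $3F^2-(C^\ast)^2 P$ degenerate, and one must check that the metric formula, the regularity of the immersion, and the curvature identities all extend continuously across them, which is exactly where the previously established $C^3$ regularity of $F$ is used.
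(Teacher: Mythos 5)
Your proposal is correct, and it actually supplies more detail than the paper does: Theorem \ref{main_th3} is stated there without a formal proof, as a summary of the preceding construction of $F$ (its $C^3$ regularity, periodicity, and boundary behaviour $F'=0$, $F\in\{\xi_{01},\xi_{02}\}$ at the lines $h=h_{0,k}$) together with Theorem \ref{theorem3.18}. Your computation of the first fundamental form, the verification that $E=(\rho')^2+1$ collapses to $3F^2/\bigl(3F^2-(C^\ast)^2P(F)\bigr)$ via $(F')^2=F^4P/\bigl(3F^2-(C^\ast)^2P\bigr)$, the strip-by-strip local isometry $(h,\theta)\mapsto(F(h),\theta)$ onto $D_C$, and the curvature identity $1-K=\tfrac19F^{8/3}$ extended by continuity all match what the paper leaves implicit, and your explicit check that everything degenerates harmlessly at the extremal parallels (where $P=0$, $F'=0$, $E=1$) is exactly the point the paper glosses over with the $C^3$ regularity of $F$. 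The one place you genuinely diverge is completeness: the paper's setup signals the do Carmo criterion it recalls in Section 2 --- the profile curve is the graph of $f\circ F$ over the whole $Oh$ axis with radius pinched between $C^\ast/\xi_{02}$ and $C^\ast/\xi_{01}$, so the surface of revolution is a closed subset of $\mathbb{R}^3$ and hence complete --- whereas you instead apply Gordon's comparison (Proposition \ref{prop1}) with the flat metric $dh^2+\xi_{02}^{-2}d\theta^2$, using $E\ge1$ and $1/F^2\ge1/\xi_{02}^2$. Both routes are valid; the closedness argument is softer and needs no metric formula, while your comparison argument is intrinsic and reuses the explicit expression for $g_{C,C^\ast}$ you had already derived, making the proof self-contained. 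Incidentally, your argument also yields slightly more than the statement asks, since it identifies the vanishing locus of $\grad K$ as exactly the gluing lines.
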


From Theorem $\ref{main_th3}$ we easily get the following result.

\begin{proposition}
The universal cover of the surface of revolution given by $\Psi_{C,C^\ast}$ is $\mathbb{R}^2$ endowed with the metric $g_{C,C^\ast}$. It is complete, $1-K>0$ on $\mathbb{R}^2$ and, on an open dense subset, it is locally isometric to $\left(D_C,g_C\right)$ and $\grad K\neq 0$ at any point. Moreover any two $\left(\mathbb{R}^2,g_{C,C_1^\ast}\right)$ and $\left(\mathbb{R}^2,g_{C,C_2^\ast}\right)$ are isometric.
\end{proposition}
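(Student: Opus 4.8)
The plan is to read the assertions about the universal cover, completeness, curvature and local geometry directly off Theorem \ref{main_th3} combined with elementary covering space theory, and to treat the final isometry statement as the only substantial point, exploiting the fact that the intrinsic metric $g_C$ of Proposition \ref{prop:graf_F} does not involve $C^\ast$.

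First I would write $r(h)=(f\circ F)(h)=C^\ast/F(h)$, which is smooth, positive (since $F(h)\in[\xi_{01},\xi_{02}]$) and periodic, and observe that $\Psi_{C,C^\ast}\colon\mathbb{R}^2\to\mathbb{R}^3$ is $2\pi C^\ast$-periodic in $\theta$, so it descends to a diffeomorphism of the cylinder $\mathbb{R}\times(\mathbb{R}/2\pi C^\ast\mathbb{Z})$ onto the surface of revolution $\Sigma_{C,C^\ast}=\Psi_{C,C^\ast}(\mathbb{R}^2)$. Hence $\Psi_{C,C^\ast}$ is a Riemannian covering of $\Sigma_{C,C^\ast}$ with deck group generated by $\theta\mapsto\theta+2\pi C^\ast$, and since $\mathbb{R}^2$ is simply connected it is the universal cover, carrying exactly the metric $g_{C,C^\ast}$ of Theorem \ref{main_th3}. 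Completeness of $(\mathbb{R}^2,g_{C,C^\ast})$ then follows because a covering that is a local isometry lifts the geodesics of the complete base $\Sigma_{C,C^\ast}$ (complete by Theorem \ref{main_th3}), so every geodesic of $\mathbb{R}^2$ extends indefinitely; and $1-K>0$ on $\mathbb{R}^2$, the local isometry with $(D_C,g_C)$ on an open dense subset, and $\grad K\neq0$ there are inherited verbatim from Theorem \ref{main_th3}, $\Psi_{C,C^\ast}$ being a local isometry.

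For the last assertion the key observation is that $\xi_{01},\xi_{02}$ are the roots of $-\xi^{8/3}+3C\xi^2-3$ and so depend only on $C$, and that by Theorem \ref{theorem3.18} the change of variable $\xi=F(h)$ carries $g_{C,C^\ast}$, on each monotonicity interval of $F$, to $g_C=\frac{3}{\xi^2(-\xi^{8/3}+3C\xi^2-3)}\,d\xi^2+\frac{1}{\xi^2}\,d\theta^2$, in which $C^\ast$ no longer appears. Given $C_1^\ast,C_2^\ast$, I would let $F_1,F_2$ be the corresponding triangular functions with gluing points $\{h^{(1)}_{0,k}\}$, $\{h^{(2)}_{0,k}\}$, define $G\colon\mathbb{R}\to\mathbb{R}$ to be the increasing homeomorphism determined by $G(h^{(1)}_{0,k})=h^{(2)}_{0,k}$ for all $k$ and $F_2(G(h))=F_1(h)$ on each interval between consecutive gluing points, and set $\Theta(h,\theta)=(G(h),\theta)$. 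On the open dense complement of the gluing lines, expressing both metrics in the common coordinate $\xi=F_1(h)=F_2(G(h))$ shows at once that $\Theta^\ast g_{C,C_2^\ast}=g_{C,C_1^\ast}$, since both the $d\theta^2$-coefficient $1/\xi^2$ and the $d\xi^2$-coefficient $3/(\xi^2(-\xi^{8/3}+3C\xi^2-3))$ are the same $C^\ast$-free functions of $\xi$.

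The main obstacle is promoting $\Theta$ to a genuine isometry across the gluing lines, where $F'$ vanishes and $h_0'\to\infty$. I would dispose of this by a density argument: $\Theta$ is a continuous bijection that restricts to a metric-preserving diffeomorphism on an open dense set, hence preserves the lengths of all curves and therefore the induced distance function, and by the Myers–Steenrod theorem a distance-preserving bijection between Riemannian manifolds is a smooth isometry. (Equivalently, one verifies that $G$ is of class $C^3$ at each $h^{(1)}_{0,k}$ by the same one-sided limit computation used to prove that $F$ is $C^3$.) As $C$ is fixed throughout, this yields an isometry between $(\mathbb{R}^2,g_{C,C_1^\ast})$ and $(\mathbb{R}^2,g_{C,C_2^\ast})$ for arbitrary $C_1^\ast,C_2^\ast$, completing the proof.
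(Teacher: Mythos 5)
Your proposal is correct and follows essentially the same route as the paper: the covering-space statements are read off Theorem \ref{main_th3} exactly as the paper does implicitly (``we only have to prove the last statement''), and your map $\Theta(h,\theta)=(G(h),\theta)$ with $F_2\circ G=F_1$ on corresponding intervals is precisely the paper's ``natural'' interval-matching isometry, justified by the same observation that in the coordinate $\xi=F(h)$ both metrics reduce to the $C^\ast$-free form $g_C$. Your handling of the gluing lines (either the one-sided limit computation showing $G$ is $C^3$ there --- note $G'(h)=\sqrt{\frac{3\xi^2-(C_2^\ast)^2T(\xi)}{3\xi^2-(C_1^\ast)^2T(\xi)}}\to 1$ as $T(\xi)\to 0$ --- or the Myers--Steenrod fallback) simply makes explicit what the paper asserts when it says the construction yields an ``(at least) $C^3$ diffeomorphism'' that ``is easy to see'' to be a global isometry.
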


\begin{proof}
We only have to prove the last statement. We construct the isometry between $\left(\mathbb{R}^2,g_{C,C_1^\ast}\right)$ and $\left(\mathbb{R}^2,g_{C,C_2^\ast}\right)$ in a natural way, in the sense that, for example, it maps the interval $\left(h_{0,-1},h_{0,1}\right)$ corresponding to $C_1^\ast$ onto the interval $\left(h_{0,-1},h_{0,1}\right)$ corresponding to $C_2^\ast$. Repeating this process, we obtain an (at least) $C^3$ diffeomorphism of $\mathbb{R}^2$. It is easy to see that such diffeomorphism is a global isometry.
\end{proof}

From Theorem \ref{theorem_C=1} and Lemma $\ref{lemma_sign}$, we have that
$\Phi_C:\left(D_C, g_C\right)\to \mathbb{S}^3$,
$$
\Phi_C(\xi,\theta)=\left(\sqrt{1-\frac{1}{C\xi^2}}\cos \zeta,\sqrt{1-\frac{1}{C\xi^2}}\sin \zeta,\frac{\cos(\sqrt{C}\theta)}{\sqrt{C}\xi},\frac{\sin(\sqrt{C}\theta)}{\sqrt{C}\xi}\right),
$$
with $\zeta(\xi)=\pm\left(\zeta_0(\xi)+c\right),$ is a biconservative immersion in $\mathbb{S}^3$ and
$$
\lim_{\xi\searrow \xi_{01}} \zeta_0(\xi)=\zeta_{0,-1}>-\infty, \qquad \lim_{\xi\nearrow \xi_{02}}\zeta_0(\xi)=\zeta_{0,1} < \infty.
$$

In the last part of our paper we will construct a biconservative immersion from $\left(\mathbb{R}^2,g_{C,C^\ast}\right)$ in $\mathbb{S}^3$, as we claimed at the beginning of this section.

In order to do this, starting with the first component of the parametrization, we consider the following continuous functions defined on $\left[\xi_{01},\xi_{02}\right]$:
\begin{equation*}
\Phi^1_k(\xi)=
\left\{
\begin{array}{lll}
\sqrt{1-\frac{1}{C\xi^2}}\cos \left(\zeta_0(\xi)+c_k\right)&,& \xi\in \left(\xi_{01},\xi_{02}\right) \\
\sqrt{1-\frac{1}{C\xi_{01}^2}}\cos \left(\zeta_{0,-1}+c_k\right)&,& \xi=\xi_{01}\\
\sqrt{1-\frac{1}{C\xi_{02}^2}}\cos \left(\zeta_{0,1}+c_k\right)&,& \xi=\xi_{02}
\end{array}
\right.,
\end{equation*}
where $c_k\in\mathbb{R}$ for any $k\in \mathbb{Z}$.

Next, consider the function $\Phi^1:\mathbb{R}\to\mathbb{R}$ defined by
\begin{equation}
\label{p1}
\Phi^1(h)=
\left\{
\begin{array}{lllll}
\left(\Phi_k^1\circ F \right)(h)&,& h\in \left[h_{0,k},h_{0,k+1}\right]&,& k\geq 1 \\
\left(\Phi_0^1\circ F \right)(h)&,& h\in \left[h_{0, -1},h_{0,1}\right] \\
\left(\Phi_{k}^1\circ F \right)(h)&,& h\in \left[h_{0,k-1},h_{0,k}\right]&,& k\leq -1
\end{array}
\right. .
\end{equation}
We will prove that $\Phi^1$ is of class $C^3$. Since $F$ is a periodic function, with  main period $2\left(h_{0,1}-h_{0,-1}\right)$, it is enough to ask $\Phi^1$ to be a $C^3$ function on the interval $\left(h_{0,-2},h_{0,2}\right)=\left(2h_{0,-1}-h_{0,1},2h_{0,1}-h_{0,-1}\right)$. This means that it is enough to study the behaviour of $F$ at $h_{0,-1}$ and $h_{0,1}$.

First, we ask $\Phi^1$ to be continuous at $h_{0,-1}$ and $h_{0,1}$, i.e.,
\begin{equation*}
\lim_{h\nearrow h_{0,1}}\Phi^1(h)=\lim_{h\searrow h_{0,1}}\Phi^1(h)\in\mathbb{R}, \qquad \lim_{h\searrow h_{0,-1}}\Phi^1(h)=\lim_{h\nearrow h_{0,-1}}\Phi^1(h)\in\mathbb{R}.
\end{equation*}
Since
\begin{eqnarray*}
\lim_{h\nearrow h_{0,1}}\Phi^1(h)&=&\lim_{h\nearrow h_{0,1}}\Phi^1_0(F(h))=\lim_{h\nearrow h_{0,1}}\Phi^1_0(\xi_0(h))\\
&=&\lim_{\xi\nearrow \xi_{02}}\Phi^1_0(\xi)=\sqrt{1-\frac{1}{C\xi_{02}^2}}\cos \left(\zeta_{0,1}+c_0\right)\in\mathbb{R}
\end{eqnarray*}
and
\begin{eqnarray*}
\lim_{h\searrow h_{0,1}}\Phi^1(h)&=&\lim_{h\searrow h_{0,1}}\Phi^1_1(F(h))=\lim_{h\searrow h_{0,1}}\Phi^1_1(\xi_1(h))\\
&=&\lim_{\xi\nearrow \xi_{02}}\Phi^1_1(\xi)=\sqrt{1-\frac{1}{C\xi_{02}^2}}\cos \left(\zeta_{0,1}+c_1\right)\in\mathbb{R},
\end{eqnarray*}
we get that $\cos \left(\zeta_{0,1}+c_0\right)=\cos \left(\zeta_{0,1}+c_1\right)$. Therefore, we have two cases, as $c_1=c_0+2s_1\pi$ or $c_1=-2\zeta_{0,1}-c_0+2s_1\pi$, where $s_1\in \mathbb{Z}$, i.e.,
$$
c_1\equiv c_0\left(\mod 2\pi\right)\quad \text{ or }\quad c_1\equiv \left(-2\zeta_{0,1}-c_0\right)\left(\mod 2\pi\right).
$$

In a similar way, for $h_{0,-1}$, we have
\begin{eqnarray*}
\lim_{h\searrow h_{0,-1}}\Phi^1(h)&=&\lim_{h\searrow h_{0,-1}}\Phi^1_0(F(h))=\lim_{h\searrow h_{0,-1}}\Phi^1_0(\xi_0(h))\\
&=&\lim_{\xi\searrow \xi_{01}}\Phi^1_0(\xi)=\sqrt{1-\frac{1}{C\xi_{01}^2}}\cos \left(\zeta_{0,-1}+c_0\right)\in\mathbb{R}
\end{eqnarray*}
and
\begin{eqnarray*}
\lim_{h\nearrow h_{0,-1}}\Phi^1(h)&=&\lim_{h\nearrow h_{0,-1}}\Phi^1_1(F(h))=\lim_{h\nearrow h_{0,-1}}\Phi^1_1(\xi_{-1}(h))\\
&=&\lim_{\xi\nearrow \xi_{01}}\Phi^1_1(\xi)=\sqrt{1-\frac{1}{C\xi_{01}^2}}\cos \left(\zeta_{0,-1}+c_{-1}\right)\in\mathbb{R}.
\end{eqnarray*}
Hence, we must have $\cos \left(\zeta_{0,-1}+c_0\right)=\cos \left(\zeta_{0,-1}+c_{-1}\right)$. Therefore we again have two cases as $c_{-1}=c_0+2s_{-1}\pi$ or $c_{-1}=-2\zeta_{0,-1}-c_0+2s_{-1}\pi$, where $s_{-1}\in \mathbb{Z}$, i.e., $c_{-1}\equiv c_0\left(\mod 2\pi\right)$ or $c_{-1}\equiv \left(-2\zeta_{0,-1}-c_0\right)\left(\mod 2\pi\right)$.

By some straightforward computation, we can see that $\Phi^1$ is of class $C^1$ on the interval $\left(h_{0,-2},h_{0,2}\right)$ if and only if
$$
\sin \left(\zeta_{0,1}+c_0\right)= - \sin \left(\zeta_{0,1}+c_1\right) \quad\text{ and } \quad \sin \left(\zeta_{0,-1}+c_0\right)= - \sin \left(\zeta_{0,-1}+c_{-1}\right).
$$
We recall that, from the continuity of $\Phi^1$, there are two possibilities for each $c_1$ and $c_{-1}$ and we can then choose
\begin{equation*}
c_1\equiv \left(-2\zeta_{0,1}-c_0\right)\left(\mod 2\pi\right) \text { and } c_{-1}\equiv \left(-2\zeta_{0,-1}-c_0\right)\left(\mod 2\pi\right).
\end{equation*}
With this choice, one obtains that $\Phi^1$ is of class $C^3$ on $\left(h_{0,-2}, h_{0,2}\right)$.

In general, if we ask $\Phi^1$ to be of class $C^3$ on $\mathbb{R}$, since $F$ is periodic, it can be shown that we have the following relations between two consecutive $c_k$, where $k\in\mathbb{Z}$:
\begin{equation}
  \label{c_k1}
  c_k\equiv
  \left\{
    \begin{array}{lllll}
       \left(-2\zeta_{0,1}-c_{k-1}\right)&\left(\mod 2\pi\right),&  k=2p+1 &,& p\in \mathbb{N}\\
       \left(-2\zeta_{0,-1}-c_{k-1}\right)&\left(\mod 2\pi\right),& k=2p &,& p \in\mathbb{N}\\
       \left(-2\zeta_{0,-1}-c_{k+1}\right)&\left(\mod 2\pi\right),& k=2p-1 &,& p\in \mathbb{Z}_{-}\\
       \left(-2\zeta_{0,1}-c_{k+1}\right)&\left(\mod 2\pi\right),&  k=2p &,& p\in \mathbb{Z}_{-}
    \end{array}
  \right.,
\end{equation}
or, equivalently,
\begin{equation*}
  c_k\equiv
  \left\{
    \begin{array}{lllll}
       \left(-2\zeta_{0,1}-c_{k-1}\right)&\left(\mod 2\pi\right),& k=2p+1 &,& p\in \mathbb{Z}\\
       \left(-2\zeta_{0,-1}-c_{k-1}\right)&\left(\mod 2\pi\right),& k=2p &,& p \in\mathbb{Z}
    \end{array}
  \right..
\end{equation*}

We note that for $c_k$, we also have the following formulas
\begin{equation}
  \label{c_k2}
  c_k\equiv
  \left\{
    \begin{array}{lllll}
      \left( k\left(\zeta_{0,1}-\zeta_{0,-1}\right)+c_0\right) &\left(\mod 2\pi\right),& k=2p &,& p\in \mathbb{Z}\\
     \left((k-1)\zeta_{0,-1} -(k+1)\zeta_{0,1}-c_0\right)&\left(\mod 2\pi\right),& k=2p+1 &,& p \in\mathbb{Z}
    \end{array}
  \right..
\end{equation}

To study the second component of the parametrization $\Phi_C$, we will work in a similar way as for the first one. We consider the following continuous functions defined on $\left[\xi_{01},\xi_{02}\right]$:
\begin{equation*}
\Phi^2_k(\xi)=
\left\{
\begin{array}{lll}
(-1)^k\sqrt{1-\frac{1}{C\xi^2}}\sin \left(\zeta_{0}(\xi)+c_k\right)&,& \xi\in \left(\xi_{01},\xi_{02}\right) \\
(-1)^k\sqrt{1-\frac{1}{C\xi_{01}^2}}\sin \left(\zeta_{0,-1}+c_k\right)&,& \xi=\xi_{01}\\
(-1)^k\sqrt{1-\frac{1}{C\xi_{02}^2}}\sin \left(\zeta_{0,1}+c_k\right)&,& \xi=\xi_{02}
\end{array}
\right.,
\end{equation*}
where $c_k\in\mathbb{R}$, for any $k\in \mathbb{Z}$, are given by $(\ref{c_k1})$.

Then, we consider the function $\Phi^2:\mathbb{R}\to\mathbb{R}$ defined by
\begin{equation}
\label{p2}
\Phi^2(h)=
\left\{
\begin{array}{lll}
\left(\Phi_k^2\circ F \right)(h)&,& h\in \left[h_{0,k},h_{0,k+1}\right], k\geq 1 \\
\left(\Phi_0^2\circ F \right)(h)&,& h\in \left[h_{0, -1},h_{0,1}\right] \\
\left(\Phi_{k}^2\circ F \right)(h)&,& h\in \left[h_{0,k-1},h_{0,k}\right], k\leq -1
\end{array}
\right. .
\end{equation}

It can be shown that, with these choices of the constants $c_k$, $\Phi^2$ is of class $C^3$.  The proof is similar to the proof of $C^3$ smoothness of $\Phi^1$.

For the third component of the parametrization $\Phi_C$, we consider the following function
$$
\Phi_0^3(\xi)=\frac{1}{\sqrt{C}\xi}, \qquad \xi\in \left[\xi_{01},\xi_{02}\right],
$$
It is obvious that $\Phi_0^3$ is a smooth function on $\left[\xi_{01},\xi_{02}\right]$.

Let us consider a new function $\Phi^3:\mathbb{R}\to\mathbb{R}$ defined by
\begin{equation}
\label{p3}
\Phi^3(h)=(\Phi_0^3\circ F)(h), \qquad h\in \mathbb{R}.
\end{equation}
Since $F$ is at least of class $C^3$ on $\mathbb{R}$  and $\Phi_0^3$ is  smooth on $\left[\xi_{01},\xi_{02}\right]$, it follows that $\Phi^3$ is at least of class $C^3$ on $\mathbb{R}$.

For the forth component of the parametrization $\Phi_C$, we define $\Phi^4$ as $\Phi^3$, i.e.,
\begin{equation}
\label{p4}
\Phi^4(h)=(\Phi_0^4\circ F)(h), \qquad h\in \mathbb{R},
\end{equation}
where $\Phi_0^4(\xi)=\frac{1}{\sqrt{C}\xi}$, for any $\xi\in \left[\xi_{01},\xi_{02}\right]$.

Now, we can conclude with the following theorem.

\begin{theorem}
\label{main_th2}
The map $\Phi_{C,C^\ast}:\left(\mathbb{R}^2,g_{C,C^\ast}\right)\to\mathbb{S}^3$, defined by
$$
\Phi_{C,C^\ast}(h,\theta)=\Phi_C(F(h),\theta)=\left(\Phi^1(h),\Phi^2(h), \Phi^3(h)\cos (\sqrt{C}\theta), \Phi^4(h)\sin (\sqrt{C}\theta)\right),
$$
$(h,\theta)\in\mathbb{R}^2$, where $\Phi^1$, $\Phi^2$, $\Phi^3$ and $\Phi^4$ are given by $(\ref{p1})$, $(\ref{p2})$, $(\ref{p3})$ and $(\ref{p4})$, respectively, and the constants $c_k$ are given by $(\ref{c_k2})$, is a biconservative immersion.
\end{theorem}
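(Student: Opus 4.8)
The plan is to verify in turn that $\Phi_{C,C^\ast}$ takes values in $\mathbb{S}^3$, that it is an immersion inducing precisely the metric $g_{C,C^\ast}$, and finally that it is biconservative; the last property will be established first on the complement of the gluing lines $h=h_{0,k}$ and then propagated to the whole plane by a continuity argument made legitimate by the $C^3$ regularity already built into $\Phi^1,\dots,\Phi^4$.

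First I would check that the image lies in $\mathbb{S}^3$. On a strip $F(h)=\xi\in(\xi_{01},\xi_{02})$ one has $\Phi^3=\Phi^4=\frac{1}{\sqrt{C}\,\xi}$, and a direct computation gives
$$
\left(\Phi^1\right)^2+\left(\Phi^2\right)^2+\left(\Phi^3\cos(\sqrt{C}\theta)\right)^2+\left(\Phi^4\sin(\sqrt{C}\theta)\right)^2=\left(1-\frac{1}{C\xi^2}\right)+\frac{1}{C\xi^2}=1,
$$
so $\Phi_{C,C^\ast}$ maps into $\mathbb{S}^3$, and by continuity this persists at the gluing points. Next I would compute the pullback metric $\Phi_{C,C^\ast}^\ast\langle\cdot,\cdot\rangle$ on each open strip: there, after the reparametrization $\xi=F(h)$ (which by Theorem \ref{theorem3.18} and the construction preceding Theorem \ref{main_th3} is the local isometry onto $(D_C,g_C)$), the map coincides with the immersion $\Phi_C$ of Theorem \ref{theorem_C=1} up to the orthogonal transformation of the $(x_1,x_2)$-plane of $\mathbb{R}^4$ encoded by the constant $c_k$ and the factor $(-1)^k$. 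Hence the induced metric on each strip is $g_{C,C^\ast}$. Since $g_{C,C^\ast}$ is a genuine Riemannian metric on all of $\mathbb{R}^2$ — at $h=h_{0,k}$ the quantity $-F^{8/3}+3CF^2-3$ vanishes and the $dh^2$-coefficient reduces to $1$, while the $d\theta^2$-coefficient is $1/F^2>0$ — and since $\Phi_{C,C^\ast}^\ast\langle\cdot,\cdot\rangle$ and $g_{C,C^\ast}$ are continuous and agree on the dense union of strips, they agree everywhere. In particular the pullback metric stays positive definite across the gluing lines, so $\Phi_{C,C^\ast}$ is an immersion even at the points where $\grad K$ and $\grad f$ vanish.

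For biconservativity I would argue locally. On each open strip the restriction of $\Phi_{C,C^\ast}$ is $\Phi_C$ pre-composed with the local isometry onto $(D_C,g_C)$ and post-composed with an isometry of $\mathbb{S}^3$ (the orthogonal transformation above, extended by the identity in the $x_3,x_4$ coordinates). As biconservativity is preserved both by isometries of the domain and by isometries of the ambient space, Theorem \ref{theorem_C=1} yields that $\Phi_{C,C^\ast}$ is biconservative on the open dense set $\mathbb{R}^2\setminus\{\,h=h_{0,k}:k\in\mathbb{Z}\,\}$. To reach the gluing lines I would invoke that biconservativity of a surface in $\mathbb{S}^3$ is equivalent to $A(\grad f)=-\frac{f}{2}\grad f$, the Ricci term in the tangent part of the bitension field being normal in a space form. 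This identity involves the immersion only through derivatives of order at most three: for a $C^3$ immersion the shape operator $A$ and $f$ are $C^1$ and $\grad f$ is continuous, so both sides are continuous vector fields on $\mathbb{R}^2$. They vanish on an open dense set, hence identically, which gives biconservativity at the gluing points as well.

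The main obstacle is exactly this last propagation step, and it rests on two points that I would stress. First, one must confirm that the biconservative equation really is an order-three condition — otherwise $C^3$ regularity, as opposed to genuine smoothness, would not suffice; this is guaranteed because in $\mathbb{S}^3$ the tangent part of $\tau_2$ reduces to $2A(\grad f)+f\grad f=0$. Second, one must ensure the map does not cease to be an immersion on the gluing lines, where the intrinsic data degenerates ($\grad K=0$); this is settled by the pullback-metric computation, which shows $\Phi_{C,C^\ast}^\ast\langle\cdot,\cdot\rangle=g_{C,C^\ast}$ remains positive definite there. The residual bookkeeping — verifying that the prescription $(\ref{c_k2})$ for the constants $c_k$ indeed realizes the claimed $\mathbb{S}^3$-isometry on each strip, consistent with the parity-dependent direction in which $\xi_k$ traverses $(\xi_{01},\xi_{02})$ — is the routine computation underlying the congruence statement.
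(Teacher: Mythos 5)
Your proposal is correct and follows essentially the same route as the paper: biconservativity and the immersion property are first obtained on each open strip $h\in\left(h_{0,k},h_{0,k+1}\right)$ by identifying the restriction with $\Phi_C$ up to isometries, and then extended across the gluing lines $h=h_{0,k}$ by continuity, using the $C^3$ regularity of the components together with the fact that the biconservative equation $2A(\grad f)+f\grad f=0$ involves derivatives of the immersion only up to order three. Your write-up is in fact more explicit than the paper's (which simply asserts the strip-wise statement as obvious), notably in verifying via the pullback metric that $\Phi_{C,C^\ast}$ remains an immersion on the gluing lines where $\grad K$ degenerates, but the underlying argument is the same.
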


\begin{proof}
Obviously, for $h\in \left(h_{0,k},h_{0,k+1}\right)$, when $k\geq 1$, or $h\in \left(h_{0,-1},h_{0,1}\right)$, or $h\in \left(h_{0,k-1},h_{0,k}\right)$, when $k\leq -1$, $\Phi_{C,C^\ast}$ is a Riemannian immersion and it is biconservative. As $\Phi_{C,C^\ast}$ is a map of class $C^3$ and the biconservative equation is a third-degree equation, by continuity, we get that $\Phi_{C,C^\ast}$ is biconservative on $\mathbb{R}^2$.
\end{proof}

\begin{remark}
For $C=C^\ast=1$ and $c_0=0$ we obtain the following plot of $\left(\pi\circ\Phi_{1,1}\right)(h,\theta)$, when $h\in\left(h_{0,-11},h_{0,11}\right)$; $\pi:\mathbb{R}^4\to\mathbb{R}^2$ denotes the projection that associates to a vector of $\mathbb{R}^4$ its first two components:

\begin{center}
\begin{tikzpicture}[xscale=1,yscale=1]

\node[inner sep=0pt]  at (.2,0){\includegraphics[width=.5\textwidth]{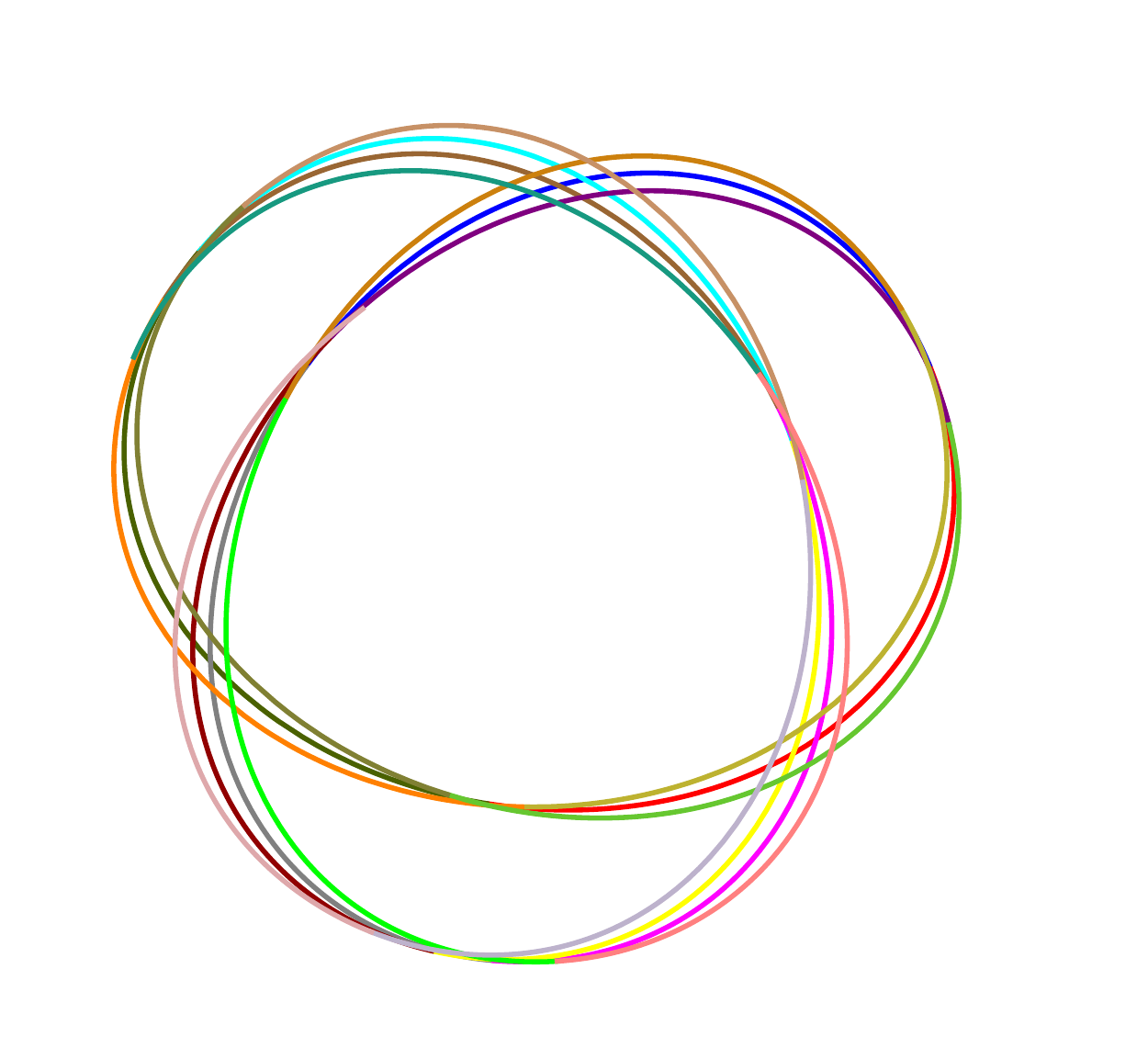}};
\draw [->,>=triangle 45] (-3,0) -- (3,0);
\draw [->,>=triangle 45] (0,-3) -- (0,3);
\draw (3,0) node[below] {$x^1$};
\draw (0,3) node[left] {$x^2$};
\end{tikzpicture}

Figure 6. Plot of $\left(\pi\circ\Phi_{1,1}\right)(h,\theta)$, when $h\in\left(h_{0,-11},h_{0,11}\right)$.

\end{center}

\end{remark}

\begin{remark}
We note that $\Phi_{C,C^\ast}$ has self-intersections (along circles).
\end{remark}

\begin{proposition}
The complete biconservative surfaces given by Theorem $\ref{main_th2}$ are unique (up to reparameterization).
\end{proposition}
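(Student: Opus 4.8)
The plan is to reduce the statement to the rigidity result of \cite{FNO} recalled in Section \ref{preliminaries}, applied on each strip between two consecutive singular parallels, and then to glue the resulting ambient isometries into a single one. Throughout, I read ``unique'' as unique up to an isometry of $\mathbb{S}^3$ together with a reparametrization of the domain.

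First I would pin down the domain. Two complete biconservative surfaces produced by Theorem \ref{main_th2} carry induced metrics $g_{C,C^\ast}$ and $g_{\tilde C,\tilde C^\ast}$. By Theorem \ref{theorem_C=1}(a) the Gaussian curvature equals $1-\frac{1}{9}\xi^{8/3}$ in the coordinate $\xi=F(h)$, so, as $h$ ranges over $\mathbb{R}$, it attains exactly the values in $\left[1-\frac{1}{9}\xi_{02}^{8/3},\,1-\frac{1}{9}\xi_{01}^{8/3}\right]$, the endpoints being reached along the parallels $h=h_{0,k}$. These extrema are intrinsic, hence determine $\xi_{01},\xi_{02}$, and since each of these is a positive root of $-\xi^{8/3}+3C\xi^2-3$ one recovers $C=\frac{\xi_{01}^{8/3}+3}{3\xi_{01}^2}$; thus isometric domains force $\tilde C=C$. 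By the proposition following Theorem \ref{main_th3}, the surfaces $\left(\mathbb{R}^2,g_{C,C^\ast}\right)$ are mutually isometric for fixed $C$, so after composing with such an isometry I may assume the two immersions $\Phi,\tilde\Phi:\left(\mathbb{R}^2,g_{C,C^\ast}\right)\to\mathbb{S}^3$ share the one domain.

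Next I would apply rigidity strip by strip. For each $k$ the open strip $S_k$ between two consecutive parallels is simply connected and satisfies $f>0$, $\grad f\neq 0$; the rigidity remark from \cite{FNO} then provides an isometry $T_k$ of $\mathbb{S}^3$ with $\tilde\Phi=T_k\circ\Phi$ on $S_k$. It remains to see that all the $T_k$ coincide. Here I use that both immersions are of class $C^3$ (Theorem \ref{main_th2}): on the closure $\overline{S_{k-1}}$ one has $\tilde\Phi=T_{k-1}\circ\Phi$ and on $\overline{S_k}$ one has $\tilde\Phi=T_k\circ\Phi$ by continuity, so along the common parallel $h=h_{0,k}$ the maps $T_{k-1}\circ\Phi$ and $T_k\circ\Phi$ share the same base point and, after passing to $C^1$ limits from both sides, the same tangent plane $d\Phi(T_pM)$ and the same unit normal of $\Phi$. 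Hence $T_{k-1}$ and $T_k$ agree on a full orthonormal frame of $T_{\Phi(p)}\mathbb{S}^3$ and on $\Phi(p)$; as an isometry of the connected manifold $\mathbb{S}^3$ is determined by its $1$-jet at a single point, $T_{k-1}=T_k$. By induction all $T_k$ equal one isometry $T$, so $\tilde\Phi=T\circ\Phi$ on the open dense union of the strips and, by continuity, on all of $\mathbb{R}^2$, which proves the claim.

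The main obstacle is exactly this gluing across the parallels $h=h_{0,k}$: the rigidity theorem requires $\grad f\neq 0$ and so is unavailable on any neighbourhood of these curves, where $\grad f$ vanishes. One therefore cannot obtain the ambient isometry in one stroke and must instead propagate it past each parallel, exploiting the $C^3$ regularity of the glued immersion—which supplies matching point, tangent plane and normal, i.e.\ a full ambient frame—together with the fact that an isometry of $\mathbb{S}^3$ is determined by its differential at a point.
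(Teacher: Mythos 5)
Your argument is correct in outline, but it takes a genuinely different route from the paper's. The paper proceeds extrinsically, exactly as in Proposition \ref{prop2.6}: it identifies the boundary of each piece $S_{C,c_k}$ as two circles lying in affine planes of $\mathbb{R}^4$, and shows that any $C^3$ gluing of two pieces $S_{C,c_k}$ and $S_{C',c_l}$ must take place along these boundary circles, by matching position, unit normal, mean curvature and $\grad |H|$; this forces the affine planes to coincide, $C=C'$, the recurrence $c_{1}\equiv\left(-2\zeta_{0,1}-c_0\right)\left(\mod 2\pi\right)$, and the coincidence of the tangent planes. You instead argue intrinsically and by rigidity: you recover $C$ from the range of the Gaussian curvature, reduce to a common domain via the isometry between the surfaces $\left(\mathbb{R}^2,g_{C,C^\ast}\right)$, invoke the rigidity remark of \cite{FNO} on each open strip between consecutive parallels, and then propagate the resulting ambient isometries $T_k$ across the parallels using the $C^3$ regularity of $\Phi_{C,C^\ast}$ and the fact that an isometry of $\mathbb{S}^3$ is determined by its $1$-jet at a point. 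What each approach buys: yours gives directly the congruence of all completed immersions with the same $C$ (hence the one-parameter family statement) and sidesteps the explicit boundary computations, including the unresolved ``we must then check that we have a $C^3$ smooth gluing'' left at the end of the paper's proof; the paper's gluing analysis proves something your congruence argument does not by itself address, namely that \emph{no other} completion of the local pieces exists --- the constants $c_k$ and the gluing along the boundary are forced, so uniqueness holds among all conceivable smooth extensions, not only among the outputs of Theorem \ref{main_th2}.

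One step of yours needs a small patch. Along a parallel $h=h_{0,k}$, the $C^1$ data of $\Phi$ and $\tilde\Phi$ pins down $dT_{k-1}$ and $dT_k$ only on the two-dimensional tangent plane $d\Phi\left(T_pM\right)$; the unit normal is determined by first-order data only up to sign, so a priori $dT_{k-1}(\eta)=\pm\,dT_k(\eta)$, and the wrong sign would make $T_{k-1}^{-1}\circ T_k$ a reflection fixing the tangent plane. To rule this out, use the mean curvature \emph{vector} $H=\frac{f}{2}\eta$, which is independent of the choice of normal, is continuous across the parallel since the immersions are $C^3$, satisfies $\tilde H=dT_{k\mp}(H)$ on the adjacent strips, and is nonzero at the parallel because $f$ stays positive there (as along the boundary circles in the paper's proof, where $f>0$ and only $\grad f$ vanishes). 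With $H(p)\neq 0$ the sign is pinned, $dT_{k-1}$ and $dT_k$ agree on a full frame at $\Phi(p)$, and your induction goes through.
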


\begin{proof}
We first denote by $S_{C,c_k}$ the surface defined by $\Phi_C:\left(D_C,g_C\right)\to\mathbb{S}^3$. Of course, $S_{C,c_k}$ and $S_{C,c_l}$ are extrinsically isometric.

The boundary of $S_{C,c_k}$ is given by the curves:
{\small{
$$
\left(\sqrt{1-\frac{1}{C\xi_{01}^2}}\cos\left(\zeta_{0,-1}+c_k\right),(-1)^k \sqrt{1-\frac{1}{C\xi_{01}^2}}\sin\left(\zeta_{0,-1}+c_k\right),\frac{\cos\left(\sqrt{C}\theta\right)}{\sqrt{C}\xi_{01}}, \frac{\sin\left(\sqrt{C}\theta\right)}{\sqrt{C}\xi_{01}}\right)
$$
}}
and
{\small{
$$
\left(\sqrt{1-\frac{1}{C\xi_{02}^2}}\cos\left(\zeta_{0,1}+c_k\right), (-1)^k \sqrt{1-\frac{1}{C\xi_{02}^2}}\sin\left(\zeta_{0,1}+c_k\right),\frac{\cos\left(\sqrt{C}\theta\right)}{\sqrt{C}\xi_{02}}, \frac{\sin\left(\sqrt{C}\theta\right)}{\sqrt{C}\xi_{02}}\right).
$$
}}
These curves are two circles in the affine planes
$$
\left(\sqrt{1-\frac{1}{C\xi_{01}^2}}\cos\left(\zeta_{0,-1}+c_k\right), (-1)^k \sqrt{1-\frac{1}{C\xi_{01}^2}}\sin\left(\zeta_{0,-1}+c_k\right),0,0\right)+\Span\left\{\overline{e}_3,\overline{e}_4\right\}
$$
and
$$
\left(\sqrt{1-\frac{1}{C\xi_{02}^2}}\cos\left(\zeta_{0,1}+c_k\right), (-1)^k \sqrt{1-\frac{1}{C\xi_{02}^2}}\sin\left(\zeta_{0,1}+c_k\right),0,0\right)+\Span\left\{\overline{e}_3,\overline{e}_4\right\},
$$
respectively. The radii of these two circles are $\frac{1}{\sqrt{C}\xi_{01}}$ and $\frac{1}{\sqrt{C}\xi_{02}}$, respectively.

If we want to glue two surfaces $S_{C,c_k}$ and $S_{C^\prime,c_l}$ then we must do it only along the boundary, and the proof of this result is similar to the proof of Proposition $\ref{prop2.6}$. This implies that the two affine planes, where the boundaries lie, coincide and $C=C^\prime$. Thus, along the boundary, we can glue surfaces only of type $S_{C,c_k}$ and $S_{C,c_l}$.

If we consider, for example, $S_{C,c_0}$ and $S_{C,c_1}$ and glue them along the boundary
$$
\left(\sqrt{1-\frac{1}{C\xi_{02}^2}}\cos\left(\zeta_{0,1}+c_0\right), \sqrt{1-\frac{1}{C\xi_{02}^2}}\sin\left(\zeta_{0,1}+c_0\right),\frac{\cos\left(\sqrt{C}\theta\right)}{\sqrt{C}\xi_{02}}, \frac{\sin\left(\sqrt{C}\theta\right)}{\sqrt{C}\xi_{02}}\right)
$$
for $S_{C,c_0}$ and
$$
\left(\sqrt{1-\frac{1}{C\xi_{02}^2}}\cos\left(\zeta_{0,1}+c_1\right), -\sqrt{1-\frac{1}{C\xi_{02}^2}}\sin\left(\zeta_{0,1}+c_1\right),\frac{\cos\left(\sqrt{C}\theta\right)}{\sqrt{C}\xi_{02}}, \frac{\sin\left(\sqrt{C}\theta\right)}{\sqrt{C}\xi_{02}}\right)
$$
for $S_{C,c_1}$, we get $c_1\equiv \left(-2\zeta_{0,1}-c_0\right)(\mod 2\pi)$, as we have already seen. Then, at a boundary point, using the coordinates $(h,\theta)$ we get that the tangent plane to the closure $\overline{S}_{C,c_0}$ of $S_{C,c_0}$ is spanned by a vector tangent to the boundary and the vector
$$
\left(-\frac{\xi_{02}^{4/3}}{\sqrt{3\left(C\xi_{02}^2-1\right)}}\sin\left(\zeta_{0,1}+c_0\right), \frac{\xi_{02}^{4/3}}{\sqrt{3\left(C\xi_{02}^2-1\right)}}\cos\left(\zeta_{0,1}+c_0\right),0,0\right).
$$
At the same boundary point, the tangent plane to $\overline{S}_{C,c_1}$ is spanned by a vector tangent to the boundary and the vector
$$
\left(\frac{\xi_{02}^{4/3}}{\sqrt{3\left(C\xi_{02}^2-1\right)}}\sin\left(\zeta_{0,1}+c_1\right), \frac{\xi_{02}^{4/3}}{\sqrt{3\left(C\xi_{02}^2-1\right)}}\cos\left(\zeta_{0,1}+c_1\right),0,0\right).
$$
As $c_1\equiv \left(-2\zeta_{0,1}-c_0\right)(\mod 2\pi)$, the two tangent planes coincide.

However, we must then check that we have a $C^3$  smooth gluing.

\end{proof}
\vspace{1cm}
We end this paper with an open problem.

\vspace{0.5cm}
\noindent \textbf{Open problem.} Is there a biconservative immersion $\Phi:\left(M^2,g\right)\to\mathbb{S}^3$, where $M$ is compact, $1-K>0$ on $M$ and $\grad f$ does not vanish at any point of an open dense subset of $M$?

\vspace{0.5cm}
Since $F$ is periodic, $\left(\mathbb{R}^2,g_{C,C^\ast}\right)$ can be quotient to a torus, but we don't know if $\Phi_{C,C^\ast}$ is periodic. Some numerical experiments suggest that $\Phi_{C,C^\ast}$ would not be periodic.


\begin{thebibliography}{99}



\bibitem{BMO12} A.~Balmu\c s, S.~Montaldo, and C.~Oniciuc, \textit{Biharmonic PNMC submanifolds in spheres}, Ark. Mat. 51 (2013), 197--221.

\bibitem{BMO10a} A.~Balmu\c s, S.~Montaldo, C.~Oniciuc, \textit{Properties of biharmonic submanifolds in spheres}, J. Geom. Symmetry Phys, 17 (2010), 87--102.

\bibitem{BMO10} A.~Balmu\c s, S.~Montaldo, C.~Oniciuc, \textit{Biharmonic hypersurfaces in 4-dimensional space forms}, Math. Nachr., 283 (2010), 1696--1705.

\bibitem{BMO08} A.~Balmu\c s, S.~Montaldo, C.~Oniciuc, \textit{Classification results for biharmonic submanifolds in spheres}, Israel J. Math., 168 (2008), 201--220.

\bibitem{CMOP} R.~Caddeo, S.~Montaldo, C.~Oniciuc, P.~Piu, \textit{Surfaces in three-dimensional space forms with divergence-free stress-bienergy tensor}, Ann. Mat. Pura Appl. (4) 193 (2014), 529--550.

\bibitem{CI91} B-Y.~Chen, S.~Ishikawa, \textit{Biharmonic surfaces in pseudo-Euclidean spaces}, Mem. Fac. Sci. Kyushu Univ. Ser. A, 45 (1991), 323--347.

\bibitem{DC} M.P. do Carmo, \textit{Differential Geometry of Curves and Surfaces}, Prentice-Hall, Inc. Englewood Cliffs, New Jersey, 1976.


\bibitem{FNO} D.~Fetcu, S. ~Nistor, C.~Oniciuc, \textit{On biconservative surfaces in $3$-dimensional space forms}, Comm. Anal. Geom., to appear.

\bibitem{FOP} D.~Fetcu, C.~Oniciuc, A.L.~Pinheiro, \textit{CMC biconservative surfaces in $\mathbb{S}^n\times\mathbb{R}$ and $\mathbb{H}^n\times\mathbb{R}$}, J. Math. Anal. Appl. 425 (2015), 588--609.

\bibitem{Fu} Y.~Fu, \textit{Explicit classification of biconservative surfaces in Lorentz $3$-space forms}, Ann. Mat. Pura Appl.(4) 194 (2015), 805--822.

\bibitem{FT} Y.~Fu, N.C.~Turgay, \textit{Complete classification of biconservative hypersurfaces with diagonalizable shape operator in Minkowski 4-space}, preprint, \texttt{arXiv:1502.05473}.

\bibitem {G} W.B.~Gordon, \textit{An analytical criterion for the completeness of Riemannian manifolds}, Proc. Amer. Math. Soc. 37 (1973), 221--225

\bibitem{HV95} Th.~Hasanis, Th. Vlachos, \textit{Hypersurfaces in $E^4$ with harmonic mean curvature vector field}, Math. Nachr., 172 (1995), 145--169.


\bibitem{GYJ} G. Y.~Jiang, \textit{$2$-harmonic maps and their first and second variational formulas}, Chinese Ann. Math. Ser. A7(4) (1986), 389--402.

\bibitem{J} G. Y.~Jiang, \textit{The conservation law for $2$-harmonic maps between Riemannian manifolds}, Acta Math. Sinica 30 (1987), 220--225.


\bibitem{LMO} E.~Loubeau, S.~Montaldo, C.~Oniciuc, \textit{The stress-energy tensor for biharmonic maps}, Math. Z. 259 (2008), 503--524.


\bibitem{MOR} S.~Montaldo, C.~Oniciuc, and A.~Ratto, \textit{Proper biconservative immersions in the Euclidean space}, Ann. Mat. Pura Appl.(4) 195 (2016), 403–-422.

\bibitem{MOR2} S.~Montaldo, C.~Oniciuc, and A.~Ratto, \textit{Biconservative surfaces}, J. Geom. Anal., 26 (2016), 313--329.


\bibitem{MOR3} S.~Montaldo, C.~Oniciuc, and A.~Ratto, \textit{On the geometry of biconservative surfaces}, Unpublished Results.

\bibitem{O10} Y.-L. Ou, \textit{Biharmonic hypersurfaces in Riemannian manifolds}, Pacific J. Math. 248 (2010), 217--232.

\bibitem{OW11} Y. -L.~Ou, Z.-P.~Wang, \textit{Constant mean curvature and totally umbilical biharmonic surfaces in 3-dimensional geometries}, J. Geom. Phys., 61 (2011), 1845--1853.




\bibitem{S} T.~Sasahara, \textit{Tangentially biharmonic Lagrangian H-umbilical submanifolds in complex space forms}, Abh. Math. Semin. Univ. Hambg., 85 (2015), 107–-123.

\bibitem{UT} A.~Upadhyay, N.C.~Turgay, \textit{A Classification of Biconservative Hypersurfaces in a Pseudo-Euclidean Space}, Preprint,  \texttt{arXiv:1512.06339}.

\end{thebibliography}
\end{document}